\newtheorem{definition}{Definition}
\newtheorem{lemma}{Lemma}
\newtheorem{theorem}{Theorem}
\newtheorem{corollary}{Corollary}
\newtheorem{remark}{Remark}
\newtheorem*{problem}{Problem}
\numberwithin{equation}{section}
\numberwithin{definition}{section}
\numberwithin{lemma}{section}
\numberwithin{theorem}{section}
\numberwithin{corollary}{section}
\numberwithin{example}{section}
\title{Starlike And Convex Functions Associated with A Nephroid domain having Cusps On The Real Axis}
\author{Lateef Ahmad Wani$^\dagger$}
   \address{$^\dagger$Department of Mathematics\\ Indian Institute of Technology, Roorkee-247667, Uttarakhand, India}
   \email{lateef17304@gmail.com}
\author{A. Swaminathan$^\ddagger$}
   \address{$^\ddagger$Department of Mathematics\\ Indian Institute of Technology, Roorkee-247667, Uttarakhand, India}
   \email{swamifma@iitr.ac.in, mathswami@gmail.com}
\begin{document}
\begin{abstract}
In this paper, we show that the Carath\'{e}odory function $\varphi_{\scriptscriptstyle {Ne}}(z)=1+z-z^3/3$ maps the open unit disk $\mathbb{D}$ onto the interior of the nephroid, a $2$-cusped kidney-shaped curve,
\begin{align*}
\left((u-1)^2+v^2-\frac{4}{9}\right)^3-\frac{4 v^2}{3}=0,
\end{align*}
and introduce new Ma-Minda type function classes $\mathcal{S}^*_{Ne}$ and $\mathcal{C}_{Ne}$ associated with it. Apart from studying the characteristic properties of the region bounded by this nephroid, the structural formulas, extremal functions, growth and distortion results, inclusion results, coefficient bounds and Fekete-Szeg\"{o} problems are discussed for the classes $\mathcal{S}^*_{Ne}$ and $\mathcal{C}_{Ne}$. Moreover, for $\beta\in\mathbb{R}$ and some analytic function $p(z)$ satisfying $p(0)=1$, we prove certain subordination implications of the first order differential subordination $1+\beta\frac{zp'(z)}{p^j(z)}\prec\varphi_{\scriptscriptstyle {Ne}}(z),\,j=0,1,2,$ and obtain sufficient conditions for some geometrically defined function classes available in the literature.
\end{abstract}
\subjclass[2010] {30C45, 30C50, 30C80}
\keywords{Starlike and convex functions, Subordination, Fekete-Szeg\"{o} problem, Cardioid, Nephroid, Lemniscate of Bernoulli}

\maketitle

\markboth{Lateef Ahmad Wani and A. Swaminathan}{Starlike and Convex Functions Associated with a Nephroid}

\section{Introduction}
Let $\mathbb{C}$ be the complex plane and $\mathbb{D}:=\left\{z\in\mathbb{C}:|z|<1\right\}$ be the open unit disk. Let $\mathcal{H}:=\mathcal{H}(\mathbb{D})$ be the collection of all analytic functions defined on $\mathbb{D}$, and let $\mathcal{A}$ consist of functions $f\in\mathcal{H}$ satisfying $f(0)=f'(0)-1=0$. Further, let $\mathcal{S}$ be the family of functions $f\in\mathcal{A}$ thet are univalent in $\mathbb{D}$. For $0\leq\alpha<1$, let $\mathcal{S}^*(\alpha)$ and $\mathcal{C}(\alpha)$ be the subclasses of $\mathcal{A}$ which consist of functions that are, respectively, starlike and convex of order $\alpha$. Analytically, these classes are represented as
\begin{align*}
\mathcal{S}^*(\alpha):=\left\{f\in\mathcal{A}:\mathrm{Re}\left(\frac{zf'(z)}{f(z)}\right)>\alpha\right\} \text{ and }
\mathcal{C}(\alpha):=\left\{f\in\mathcal{A}:\mathrm{Re}\left(1+\frac{zf''(z)}{f'(z)}\right)>\alpha\right\}.
\end{align*}
The classes $\mathcal{S}^*=\mathcal{S}^*(0)$ and $\mathcal{C}=\mathcal{C}(0)$ are the well-known classes of starlike and convex functions. These two classes are related by the familiar Alexander's theorem as: $f\in\mathcal{C}(\alpha)$ if and only if $zf'\in\mathcal{S}^*(\alpha)$. For $0<\beta\leq1$, the classes $\mathcal{SS}^*(\beta)\subset\mathcal{A}$ and $\mathcal{SC}(\beta)\subset\mathcal{A}$, consisting of strongly starlike and strongly convex functions of order $\beta$, are defined as
\begin{align*}
\mathcal{SS}^*(\beta):=\left\{f:\left|\arg \frac{zf'(z)}{f(z)}\right|<\frac{\beta\pi}{2}\right\}
\text{ and }
\mathcal{SC}(\beta):=
              \left\{f:\left|\arg\left(1+\frac{zf''(z)}{f'(z)}\right)\right|<\frac{\beta\pi}{2}\right\}.
\end{align*}
Observe that $\mathcal{SS}^*(1)=\mathcal{S}^*$, and for $0<\beta<1,\, \mathcal{SS}^*(\beta)$ consists only of bounded starlike functions, and hence in this case the inclusion $\mathcal{SS}^*(\beta)\subset\mathcal{S}^*$ is proper.\\
For$f,g\in\mathcal{H}$, the function $f$ is said to be subordinate to $g$, written as $f\prec g$, if there exists a function $w\in\mathcal{H}$ satisfying $w(0)=0$ and $|w(z)|<1$ such that $f(z)=g(w(z))$. Indeed, $f\prec g$ implies that $f(0)=g(0)$ and $f(\mathbb{D})\subset g(\mathbb{D})$. Moreover, if the function $g(z)$ is univalent, then $f\prec g$ if, and only if, $f(0)=g(0)$ and $f(\mathbb{D})\subset g(\mathbb{D})$. Here, the function $w(z)$ is the well known Schwarz function. An analytic function $f:\mathbb{D}\to\mathbb{C}$ satisfying $f(0)=1$ and $\mathrm{Re}\left(f(z)\right)>0$ for every $z\in\mathbb{D}$ is called a Carath\'{e}odory function.

Ma and Minda \cite{Ma-Minda-1992-A-unified-treatment} used the concept of subordination to develop an interesting method of constructing subclasses of starlike and convex functions. For this purpose, we consider the analytic function $\varphi:\mathbb{D}\to\mathbb{C}$ satisfying the following:
\begin{enumerate}[(i)]
	\item
	$\varphi(z)$ is univalent with $\mathrm{Re}(\varphi)>0$,
	\item
	$\varphi(\mathbb{D})$ is starlike with respect to $\varphi(0)=1$,
	\item
	$\varphi(\mathbb{D})$ is symmetric about the real axis, and
	\item
	$\varphi'(0)>0$.
\end{enumerate}
Throughout this manuscript, wherever the analytic function $\varphi$ is given, it implies that the function $\varphi$ retains the above properties. Using this $\varphi$, the authors in \cite{Ma-Minda-1992-A-unified-treatment} defined the function classes $\mathcal{S}^*(\varphi)$ and $\mathcal{S}^*(\varphi)$ as
\begin{align}\label{Definition-Ma-Minda-classes-varphi}
\mathcal{S}^*(\varphi):=\left\{f\in\mathcal{A}:\frac{zf'(z)}{f(z)}\prec\varphi(z)\right\} \text{ and }
\mathcal{C}(\varphi):=\left\{f\in\mathcal{A}:1+\frac{zf''(z)}{f'(z)}\prec\varphi(z)\right\}.
\end{align}
It is clear that for every $\varphi$ satisfying the conditions (i)--(iv), the classes $\mathcal{S}^*(\varphi)$ and $\mathcal{C}(\varphi)$ are the subclasses of $\mathcal{S}^*$ and $\mathcal{C}$, respectively. Specialization of the function $\varphi(z)$ in (\ref{Definition-Ma-Minda-classes-varphi}) leads to a number of well-known function classes. For instance, taking $\varphi(z)=(1+z)/(1-z)$ yields $\mathcal{S}^*$ and $\mathcal{C}$, taking $\varphi(z)=(1+(1-2\alpha)z)/(1-z)\,(0\leq\alpha<1)$ yields $\mathcal{S}^*(\alpha)$ and $\mathcal{C}(\alpha)$. If we set $\varphi(z)=(1+Az)/(1+Bz)$, where $A,B\in[-1,1]$ and $B<A$, we obtain the Janowski classes $\mathcal{S}^*[A,B]$ and $\mathcal{C}[A,B]$
(see \cite{Janowski-1973-class-some-extremal}). The classes $\mathcal{SS}^*(\beta)$ and $\mathcal{SC}(\beta)$ are obtained for $\varphi(z)=((1+z)/(1-z))^{\beta}\;(0<\beta\leq1)$. The parabolic starlike class $\mathcal{S}_P$ introduced by R{\o}nning \cite{Ronning-1993-UCV-PAMS} and the uniformly convex class UCV introduced by Goodman \cite{Goodman-1991-UCV-Ann.Polo} are obtained for the function
\begin{align*}
\varphi(z)=1+\frac{2}{\pi^2}\left(\log\frac{1+\sqrt{z}}{1-\sqrt{z}}\right)^2, \quad z\in\mathbb{D}.
\end{align*}
For $0\leq{k}<\infty$, the classes $k-\mathcal{ST}$ (k-uniformly starlike functions) and $k-UCV$ (k-uniformly convex functions) introduced by Kanas and Wisniowska \cite{Kanas-Wisni-1999-ConicRegions-k-UCV-JCAM} are obtained from (\ref{Definition-Ma-Minda-classes-varphi}) on taking the function $\varphi(z)$ as
\begin{align*}
\varphi(z)=p_k(z)=\frac{1}{1-k^2}\cosh\left(A\,\log\frac{1+\sqrt{z}}{1-\sqrt{z}}\right)-\frac{k^2}{1-k^2},
                       \quad z\in\mathbb{D},\, 0\leq{k}<\infty
\end{align*}
where $A=(2\cos^{-1}{k})/\pi$. Sok\'{o}{\l} and Stankiewicz \cite{Sokol-J.Stankwz-1996-Lem-of-Ber} introduced and discussed the starlike class $\mathcal{S}^*_L$ associated with the right-half of the lemniscate of Bernoulli $\left(u^2+v^2\right)^2-2\left(u^2-v^2\right)=0$. In Ma-Minda's form, $\mathcal{S}^*_L:=\mathcal{S}^*(\sqrt{1+z})$. Moreover, a function $f\in\mathcal{S}^*_L$ is called a Sok\'{o}{\l} and Stankiewicz starlike function. Sok\'{o}{\l} \cite{Sokol-1998-Cassinian-Curve} introduced another important class $\mathcal{S}^*_{q_c}:=\mathcal{S}^*(q_c)$, where $q_c(z)=\sqrt{1+cz}$ with $c\in(0,1]$. For $c\in(0,1)$, the function $q_c(z)$ maps $\mathbb{D}$ onto the interior of right loop of the Cassinian ovals $\left(u^2+v^2\right)^2-2\left(u^2-v^2\right)=c^2-1$. The following Ma-Minda type classes have been introduced in the recent past:
\begin{enumerate}[(a)]
\item
The class $\mathcal{S}^*_{RL}:=\mathcal{S}^*(\varphi_{\scriptscriptstyle{RL}})$ with
\begin{align*}
\varphi_{\scriptscriptstyle{RL}}(z)=\sqrt{2}-(\sqrt{2}-1)\sqrt{\frac{1-z}{1+2(\sqrt{2}-1)z}},
\end{align*}
was considered by Mendiratta et al. \cite{Mendiratta-2014-Shifted-Lemn-Bernoulli}. The function $\varphi_{\scriptscriptstyle{RL}}(z)$ maps $\mathbb{D}$ onto the region enclosed by the left-half of the shifted lemniscate of Bernoulli $\left((u-\sqrt{2})^2+v^2\right)^2-2\left((u-\sqrt{2})^2+v^2\right)=0$.
\item
The function class $\mathcal{S}^*_{\leftmoon}:=\mathcal{S}^*(z+\sqrt{1+z^2})$ was introduced by Raina and Sok\'{o}{\l}
\cite{Raina-Sokol-2015-Crescent-Shaped-I} and then further discussed in \cite{Gandhi-Ravi-2017-Lune,Raina-Sharma-Sokol-2018-Crescent-Shaped,
Sharma-Raina-Sokol-2019-Ma-Minda-Crescent-Shaped}. The function $\varphi_{\scriptscriptstyle{\leftmoon}}(z)=z+\sqrt{1+z^2}$ maps $\mathbb{D}$ onto the crescent shaped region $\left\{w\in\mathbb{C}:|w^2-1|<2|w|,\;\mathrm{Re}\,w>0\right\}$.
\item
The class $\mathcal{S}^*_{e}:=\mathcal{S}^*(e^{z})$ was introduced and discussed by Mendiratta et al. \cite{Mendiratta-Ravi-2015-Expo-BMMS}.
\item
Sharma et al. \cite{Sharma-Ravi-2016-Cardioid} introduced and investigated the class  $\mathcal{S}^*_C:=\mathcal{S}^*(1+4z/3+2z^2/3)$ associated with the cardioid $(9u^2+9v^2-18u+5)^2-16(9u^2+9v^2-6u+1)=0$, a {\it heart-shaped} curve.
\item
The function class $\mathcal{S}^*_{R}:=\mathcal{S}^*(\varphi_{\scriptscriptstyle{0}})$, where $\varphi_{\scriptscriptstyle{0}}(z)$ is the rational function
\begin{align*}
\varphi_{\scriptscriptstyle{0}}(z)=
     1+\frac{z}{k}\left(\frac{k+z}{k-z}\right)=1+\frac{1}{k}z+\frac{2}{k^2}z^2+\frac{2}{k^3}z^3+\cdots,
                        \quad k=1+\sqrt{2},
\end{align*}
was discussed by Kumar and Ravichandran \cite{Kumar-Ravi-2016-Starlike-Associated-Rational-Function}.
\item
The class $\mathcal{S}^*_{lim}:=\mathcal{S}^*(1+\sqrt{2}z+z^2/2)$ associated with the limacon $(4u^2+4v^2-8u-5)^2+8(4u^2+4v^2-12u-3)=0$ was considered by Yunus et al. \cite{Yunus-2018-Limacon}.
\item
Recently, Kargar et al. \cite{Kargar-2019-Booth-Lem-A.M.Physics} discussed the following starlike class associated with the Booth lemniscate:
\begin{align*}
\mathcal{BS}(\alpha):=\mathcal{S}^*\left(1+\frac{z}{1-\alpha{z}^2}\right), \quad 0\leq\alpha<1.
\end{align*}
\item
Cho et al. \cite{Cho-2019-Sine-BIMS} introduced the Ma-Minda type function class $\mathcal{S}^*_S:=\mathcal{S}^*(1+\sin{z})$ associated with the sine function.
\item
For $0\leq\alpha<1$, Khatter et al. \cite{Khatter-Ravi-2019-Lem-Exp-Alpha-RACSAM} introduced and discussed in detail the classes
\begin{align*}
\mathcal{S}^*_{\alpha,e}:=\mathcal{S}^*\left(\alpha+(1-\alpha)e^z\right) \text{ and }
\mathcal{S}^*_L(\alpha):=\mathcal{S}^*\left(\alpha+(1-\alpha)\sqrt{1+z}\right),
\end{align*}
associated with the exponential function and the lemniscate of Bernoulli. Clearly, for $\alpha=0$, these classes reduce to the function classes $\mathcal{S}^*_e$ and $\mathcal{S}^*_L$, respectively.
\item
Very recently, Goel and Kumar \cite{Goel-Siva-2019-Sigmoid-BMMS} introduced the starlike class $\mathcal{S}^*_{SG}:=\mathcal{S}^*(\varphi_{\scriptscriptstyle{SG}})$ associated with the sigmoid function $\varphi_{\scriptscriptstyle{SG}}(z)=2/(1+e^{-z})$.
\end{enumerate}

Motivated by the aforementioned works, in this paper, we introduce the classes $\mathcal{S}^*_{Ne}:=\mathcal{S}^*(\varphi_{\scriptscriptstyle{Ne}})$ and $\mathcal{C}_{Ne}:=\mathcal{C}(\varphi_{\scriptscriptstyle{Ne}})$, where the analytic function
$\varphi_{\scriptscriptstyle{Ne}}:\mathbb{D}\to\mathbb{C}$ is defined as $\varphi_{\scriptscriptstyle{Ne}}(z):=1+z-z^3/3$. First we show that $\mathcal{S}^*_{Ne}$ and $\mathcal{C}_{Ne}$ are well defined, i.e., $\varphi_{\scriptscriptstyle{Ne}}(z)$ satisfies the conditions (i)--(iv). For $z_1,z_2\in\mathbb{D}$, $\varphi_{\scriptscriptstyle {Ne}}(z_1)=\varphi_{\scriptscriptstyle {Ne}}(z_2)$ gives $(z_1-z_2)(z_1^2+z_1z_2+z_2^2-3)=0$. This holds true only if $z_1=z_2$, and hence, proves the univalency of $\varphi_{\scriptscriptstyle{Ne}}(z)$ in $\mathbb{D}$. Also, for each $z\in\mathbb{D}$, the function $L(z)=z-z^3/3$ satisfies
\begin{align*}
\mathrm{Re}\left(\frac{zL'(z)}{L(z)}\right)
   =\mathrm{Re}\left(\frac{1-z^2}{1-z^2/3}\right)
    =1-\frac{2}{3}\mathrm{Re}\left(\frac{z^2}{1-z^2/3}\right)
    \geq 1-\frac{2}{3}\frac{|z|^2}{1-|z|^2/3}
    >1-1=0.
\end{align*}
This shows that the function $L(z)$ is starlike in $\mathbb{D}$ and consequently, the function $\varphi_{\scriptscriptstyle{Ne}}(z)=1+L(z)$ is starlike with respect to $\varphi_{\scriptscriptstyle{Ne}}(0)=1$. We note that the starlikeness of $L(z)$ in $\mathbb{D}$ is also clear in light of the Brannan's criteria for starlikeness of a univalent polynomial \cite[Theorem 2.3]{Brannan-1970-On-Univalent-Polynomials}. The condition $\varphi_{\scriptscriptstyle {Ne}}'(0)>0$ is easy to verify. For the remaining, we prove the following result.
\begin{theorem}
The function $\varphi_{\scriptscriptstyle {Ne}}(z)=1+z-z^3/3$ maps $\mathbb{D}$ onto the region bounded by the nephroid
\begin{align}\label{Equation-of-Nephroid}
\left((u-1)^2+v^2-\frac{4}{9}\right)^3-\frac{4 v^2}{3}=0,
\end{align}
which is symmetric about the real axis and lies completely inside the right-half plane $u>0$ {\rm(see \Cref{Figure-Nephroid})}.
\end{theorem}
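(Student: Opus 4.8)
The plan is to exploit the univalence of $\varphi_{\scriptscriptstyle{Ne}}$ so that the assertion reduces to a computation on the boundary circle.

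\textbf{Step 1: the image is the interior of a Jordan curve.} The factorization already used above, namely that $\varphi_{\scriptscriptstyle{Ne}}(z_1)=\varphi_{\scriptscriptstyle{Ne}}(z_2)$ amounts to $(z_1-z_2)(z_1^2+z_1z_2+z_2^2-3)=0$, in fact yields injectivity on $\overline{\mathbb D}$: if $|z_1|,|z_2|\le 1$ and $z_1\ne z_2$, then $|z_1^2+z_1z_2+z_2^2|\le|z_1|^2+|z_1||z_2|+|z_2|^2\le 3$, with equality forcing $|z_1|=|z_2|=1$ and $z_1^2=z_1z_2=z_2^2=1$, hence $z_1=z_2$, a contradiction. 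Thus $\Gamma:=\varphi_{\scriptscriptstyle{Ne}}(\partial\mathbb D)$ is a Jordan curve, and since $\varphi_{\scriptscriptstyle{Ne}}$ is univalent on $\mathbb D$, continuous on $\overline{\mathbb D}$ and $\varphi_{\scriptscriptstyle{Ne}}(\mathbb D)$ is bounded, $\varphi_{\scriptscriptstyle{Ne}}$ maps $\mathbb D$ onto the bounded component of $\mathbb C\setminus\Gamma$.

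\textbf{Step 2: the boundary curve is the nephroid.} Writing $z=e^{it}$ and $\varphi_{\scriptscriptstyle{Ne}}(e^{it})=u(t)+iv(t)$, we get $u(t)=1+\cos t-\tfrac13\cos 3t$ and $v(t)=\sin t-\tfrac13\sin 3t$. The computational heart is a short use of the triple--angle identities: $\sin 3t=3\sin t-4\sin^3 t$ collapses $v(t)$ to $v(t)=\tfrac43\sin^3 t$, while $(u(t)-1)^2+v(t)^2=\bigl|e^{it}-\tfrac13e^{3it}\bigr|^2=\bigl|1-\tfrac13e^{2it}\bigr|^2=\tfrac{10}{9}-\tfrac23\cos 2t$, so that $(u(t)-1)^2+v(t)^2-\tfrac49=\tfrac23(1-\cos 2t)=\tfrac43\sin^2 t$. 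Cubing this last relation gives $\bigl((u-1)^2+v^2-\tfrac49\bigr)^3=\tfrac{64}{27}\sin^6 t=\tfrac43\bigl(\tfrac43\sin^3 t\bigr)^2=\tfrac43 v^2$, which is precisely \eqref{Equation-of-Nephroid}; hence $\Gamma$ lies on that algebraic curve. To see that $\Gamma$ is the \emph{whole} curve and not a sub-arc, I would pass to polar coordinates $(u,v)=(1+\rho\cos\psi,\rho\sin\psi)$ centered at $(1,0)$: \eqref{Equation-of-Nephroid} becomes the cubic $(\rho^2-\tfrac49)^3=\tfrac43\rho^2\sin^2\psi$ in $\rho^2$, and a short check (the local maximum value of $\tau\mapsto(\tau-\tfrac49)^3-\tfrac43\sin^2\psi\,\tau$ on $[0,\infty)$ equals $\tfrac{16}{27}\sin^2\psi(|\sin\psi|-1)\le 0$) shows it has a unique nonnegative root $\rho(\psi)>0$ for every $\psi$. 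Thus \eqref{Equation-of-Nephroid} is itself a (star-shaped) Jordan curve, and since a Jordan curve contained in another Jordan curve must coincide with it, $\Gamma$ is the nephroid and $\varphi_{\scriptscriptstyle{Ne}}(\mathbb D)$ is the region it bounds.

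\textbf{Step 3: symmetry and the half-plane.} Symmetry about the real axis is immediate from the real coefficients of $\varphi_{\scriptscriptstyle{Ne}}$, i.e.\ $\overline{\varphi_{\scriptscriptstyle{Ne}}(z)}=\varphi_{\scriptscriptstyle{Ne}}(\bar z)$ (equivalently $u(-t)=u(t)$ and $v(-t)=-v(t)$). For the inclusion in $\{u>0\}$, note that $\mathrm{Re}\,\varphi_{\scriptscriptstyle{Ne}}$ is harmonic on $\mathbb D$, so it attains its infimum on $\partial\mathbb D$; using $\cos 3t=4\cos^3 t-3\cos t$ we have $u(t)=1+2\cos t-\tfrac43\cos^3 t$, and with $x=\cos t\in[-1,1]$ and $g(x)=1+2x-\tfrac43x^3$ the interior critical points are $x=\pm1/\sqrt2$; comparing $g(\pm1)$ and $g(\pm1/\sqrt2)$ gives $\min g=g(-1/\sqrt2)=1-\tfrac{2\sqrt2}{3}>0$ (since $8<9$). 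Hence $\mathrm{Re}\,\varphi_{\scriptscriptstyle{Ne}}(z)\ge 1-\tfrac{2\sqrt2}{3}>0$ on $\mathbb D$, so $\varphi_{\scriptscriptstyle{Ne}}(\mathbb D)$ lies in the right half-plane (and this incidentally also verifies property (i)).

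The main obstacle is not a hard estimate but the bookkeeping in Step 2: confirming that the parametrized curve sweeps out the entire nephroid rather than a piece of it. The trigonometric simplification itself is brief once the triple--angle identities are brought in, and the remaining assertions reduce to a one-variable minimization and the observation that $\varphi_{\scriptscriptstyle{Ne}}$ has real coefficients.
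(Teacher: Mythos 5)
Your proof is correct and its computational core is exactly the paper's argument: parametrize the boundary by $\varphi_{\scriptscriptstyle{Ne}}(e^{it})$ and use the triple-angle identities to verify the algebraic equation \eqref{Equation-of-Nephroid}. The extra material you supply — injectivity on $\overline{\mathbb D}$ so that the image of $\mathbb D$ is the interior of the Jordan curve, the polar-coordinate check that the parametrized curve is the whole nephroid rather than a sub-arc, and the minimization giving $\mathrm{Re}\,\varphi_{\scriptscriptstyle{Ne}}\geq 1-\tfrac{2\sqrt2}{3}>0$ — are points the paper leaves implicit (the last one reappearing there as \Cref{Lemma-Bounds-Re(varphiNe)-NeP}), so your write-up is a more complete version of the same route rather than a different one.
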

\begin{proof}
It is enough to prove that for $t\in(-\pi,\pi]$, the curve $\varphi_{\scriptscriptstyle {Ne}}(e^{it})$ is the nephroid (\ref{Equation-of-Nephroid}). Indeed,
\begin{align*}
u+iv=\varphi_{\scriptscriptstyle {Ne}}(e^{it})=1+\cos{t}-\frac{1}{3}\cos{3t}+i\left(\sin{t}-\frac{1}{3}\sin{3t}\right)
\end{align*}
gives
\begin{align*}
u-1=\cos{t}-\frac{\cos{3t}}{3}  \text{ and }  v=\sin{t}-\frac{\sin{3t}}{3}.
\end{align*}
Squaring and adding these two equations, we have
\begin{align*}
(u-1)^2+v^2&=\frac{10}{9}-\frac{2}{3}\cos{2t}\\
             &=\frac{4}{9}+\frac{4}{3}\sin^2{t}\\
               &=\frac{4}{9}+\left(\frac{4}{3}\left(\frac{4}{3}\sin^3{t}\right)^2\right)^\frac{1}{3}\\
                 &=\frac{4}{9}+\left(\frac{4}{3}v^2\right)^\frac{1}{3},
\end{align*}
which implies (\ref{Equation-of-Nephroid}).
\end{proof}
\begin{figure}[H]
 \includegraphics{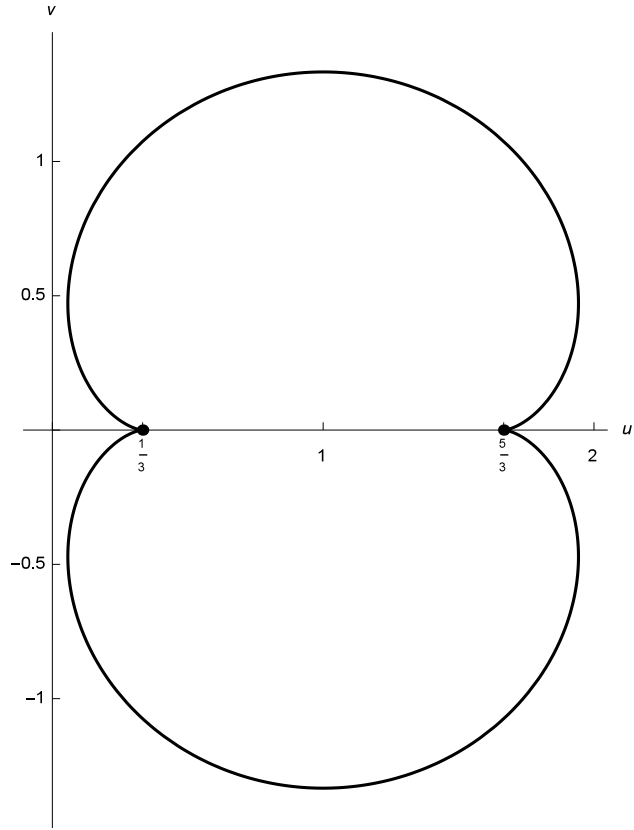}
 \caption{Nephroid: The boundary curve of $\varphi_{\scriptscriptstyle {Ne}}(\mathbb{D})$, where $\varphi_{\scriptscriptstyle {Ne}}(z)=1+z-z^3/3$.}
 \label{Figure-Nephroid}
\end{figure}
Thus, $\mathcal{S}^*_{Ne}$ and $\mathcal{C}_{Ne}$ are Ma-Minda type classes of analytic functions associated with the function $\varphi_{\scriptscriptstyle{Ne}}(z)=1+z-z^3/3$, and $f\in\mathcal{S}^*_{Ne}\;(\text{or } \mathcal{C}_{Ne})$ if, and only if, $zf'(z)/f(z)\;(\text{or } 1+zf''(z)/f'(z))$ lies inside the region bounded by the nephroid (\ref{Equation-of-Nephroid}).

Before proceeding further, we would like to provide some geometrical information and outline of the literature related to nephroid domain. Geometrically, a nephroid is the locus of a point on the circumference of a circle of radius $\rho$ traversing positively the outside of a fixed circle of radius $2\rho$. It is an algebraic curve of degree six and is an epicycloid having two cusps. The plane curve nephroid was studied by Huygens and Tschirnhausen around 1679 in connection with the theory of caustics, a method of deriving a new curve based on a given curve and a point. In 1692, J. Bernoulli showed that the nephroid is the catacaustic (envelope of rays emanating from a specified point) of a cardioid for a luminous cusp. However, the name nephroid, which means {\it kidney shaped}, was first used by the English mathematician Richard A. Proctor in 1878 in his book ``The Geometry of Cycloids". Further details in this direction can be found in \cite{Lockwood-Book-of-Curves-2007, Yates-1947-Handbook-Curves}.

We note that all the regions considered earlier were having either no cusp or a single one (cardioid), but in our case, the region bounded by the curve $\varphi_{\scriptscriptstyle {Ne}}(e^{it})\;(-\pi<t\leq\pi)$ consists of two cusps. Since, cusps (inward) of a function are due to its critical points, it follows that the cusps of $\varphi _{\scriptscriptstyle {Ne}}(z)$ are at the points $z=\pm1$. We further remark that, nephroid curves with both cusps on the imaginary axis, at the points $z=\pm i$,  are also used in the literature for various other problems. For our objective, we only consider the nephroid domains having their cusps on the real axis, at the points $z=\pm1$.
\section{Preliminary Lemmas and Structural Formula}  
\begin{lemma}\label{Lemma-Bounds-Re(varphiNe)-NeP}
For $0<r<1$, the function $\varphi_{\scriptscriptstyle {Ne}}(z)$ satisfies
\begin{align*}
\min_{|z|=r}\mathrm{Re}\big(\varphi_{\scriptscriptstyle {Ne}}(z)\big)=
\begin{cases}
1-r+\frac{1}{3}r^3,                            & \text{ if }~ r\leq \frac{1}{\sqrt{3}}\\
1-\frac{1}{3}\left(1+r^2\right)^\frac{3}{2},     & \text{ if }~ r\geq \frac{1}{\sqrt{3}}
\end{cases}
\end{align*}
and
\begin{align*}
\max_{|z|=r}\mathrm{Re}\big(\varphi_{\scriptscriptstyle {Ne}}(z)\big)=
\begin{cases}
1+r-\frac{1}{3}r^3,                           & \text{ if }~  r\leq \frac{1}{\sqrt{3}}\\
1+\frac{1}{3}\left(1+r^2\right)^\frac{3}{2},    & \text{ if }~  r\geq \frac{1}{\sqrt{3}}.
\end{cases}
\end{align*}
\end{lemma}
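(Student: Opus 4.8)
The plan is to pass to polar coordinates and reduce the statement to an elementary one-variable extremum problem. Writing $z=re^{it}$ with $t\in(-\pi,\pi]$, set
\[
g(t):=\mathrm{Re}\big(\varphi_{\scriptscriptstyle {Ne}}(re^{it})\big)=1+r\cos t-\tfrac{1}{3}r^3\cos 3t .
\]
Since $g$ is continuous and $2\pi$-periodic, its extrema occur at the zeros of $g'$. Using $\sin 3t=3\sin t-4\sin^3 t$ one computes
\[
g'(t)=-r\sin t+r^3\sin 3t=r\sin t\,\big(3r^2-1-4r^2\sin^2 t\big),
\]
so the critical angles are $t\in\{0,\pi\}$ together with, whenever $3r^2-1\ge 0$, the angles $t$ with $\sin^2 t=(3r^2-1)/(4r^2)$, equivalently $\cos^2 t=(1+r^2)/(4r^2)$. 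Note this second family of critical points exists precisely when $r\ge 1/\sqrt3$ (this is also the condition that $(1+r^2)/(4r^2)\le 1$).

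First I would dispose of the case $r\le 1/\sqrt3$. Here the only critical points are $t=0$ and $t=\pi$, with $g(0)=1+r-\tfrac13 r^3$ and $g(\pi)=1-r+\tfrac13 r^3$; since $g(0)-g(\pi)=2r\big(1-\tfrac13 r^2\big)>0$, the maximum is $g(0)$ and the minimum is $g(\pi)$, which is exactly the first alternative in each of the two formulas.

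For $r\ge 1/\sqrt3$ I would evaluate $g$ at the extra critical points. Writing $\cos 3t=\cos t\,(4\cos^2 t-3)$ and substituting $\cos^2 t=(1+r^2)/(4r^2)$ gives, after simplification, $g(t)=1+\tfrac{2r}{3}(1+r^2)\cos t$; and since there $\cos t=\pm\sqrt{1+r^2}/(2r)$, we get $g(t)=1\pm\tfrac13(1+r^2)^{3/2}$. It then remains to compare these with the values $g(0),g(\pi)$ at the other two critical points. For the maximum this is the inequality $\tfrac13(1+r^2)^{3/2}\ge r-\tfrac13 r^3=\tfrac13 r(3-r^2)$, i.e.\ (both sides being positive for $0<r<1$) $(1+r^2)^3\ge r^2(3-r^2)^2$; and indeed
\[
(1+r^2)^3-r^2(3-r^2)^2=(1-3r^2)^2\ge 0 ,
\]
with equality exactly at $r=1/\sqrt3$. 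The analogous comparison for the minimum reduces to the same identity. Hence for $r\ge 1/\sqrt3$ the extrema are $1\pm\tfrac13(1+r^2)^{3/2}$, which is the second alternative; moreover the vanishing of $(1-3r^2)^2$ at $r=1/\sqrt3$ confirms that the two branches of each formula match continuously there.

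The one place where a little care is needed is turning $g$ at the interior critical points into the closed form $1\pm\tfrac13(1+r^2)^{3/2}$; once that is done, the comparison with $g(0)$ and $g(\pi)$ collapses to the perfect-square identity above, so no real obstacle remains. As an alternative I would mention that one could instead argue geometrically from the parametrization of the nephroid established above, but the direct computation is shorter.
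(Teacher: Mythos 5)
Your proof is correct and follows essentially the same route as the paper: parametrize the circle $|z|=r$ and treat $\mathrm{Re}\,\varphi_{\scriptscriptstyle{Ne}}(re^{it})$ as a one-variable extremum problem, with the same critical points $\cos t=\pm\sqrt{1+r^2}/(2r)$ appearing exactly when $r\ge 1/\sqrt{3}$. The only difference is in execution: where the paper substitutes $x=\cos t$ and settles the cubic's behaviour by ``further calculations'' and a ``graphical observation'' (for $r\le 1/\sqrt{3}$), you differentiate in $t$ directly and close all comparisons with the identity $(1+r^2)^3-r^2(3-r^2)^2=(1-3r^2)^2\ge 0$, which makes the argument fully self-contained.
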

\begin{proof}
Let $z=re^{it}\;(-\pi<t\leq\pi)$. Then
\begin{align*}
\mathrm{Re}\left(\varphi_{\scriptscriptstyle {Ne}}(re^{it})\right)&=1+r\cos{t}-\frac{r^3}{3}\cos{3t}\\
          &=1+r(1+r^2)\cos{t}-\frac{4}{3}r^3\cos^3{t}\\
             &=1+r(1+r^2)x-\frac{4}{3}r^3x^3=:G(x),
\end{align*}
where $x=\cos{t}$. It is easy to verify that $G'(x)=0$ if, and only if, $x=\pm\sqrt{1+r^2}/2r$, and the number $\sqrt{1+r^2}/{2r}$ is less than or equal to $1$ if, and only if, $r\geq1/\sqrt{3}$. Further calculations show that $G(x)$ attains its minimum at $x=-\sqrt{1+r^2}/2r$ and maximum at $x=\sqrt{1+r^2}/2r$. Therefore, for $1/\sqrt{3}\leq{r}<1$,
\begin{align*}
\min_{|z|=r}\mathrm{Re}\big(\varphi_{\scriptscriptstyle {Ne}}(z)\big)=G\left(-\frac{\sqrt{1+r^2}}{2r}\right)=1-\frac{1}{3} \left(1+r^2\right)^\frac{3}{2}
\end{align*}
and
\begin{align*}
\max_{|z|=r}\mathrm{Re}\big(\varphi_{\scriptscriptstyle {Ne}}(z)\big)=G\left(\frac{\sqrt{1+r^2}}{2r}\right)=1+\frac{1}{3} \left(1+r^2\right)^\frac{3}{2}.
\end{align*}
Moreover, when $0<r\leq1/\sqrt{3}$, a graphical observation shows that the function $G(x)$ is increasing for every $x\in[-1,1]$. This implies that whenever $0<r\leq1/\sqrt{3}$,
\begin{align*}
\min_{|z|=r}\mathrm{Re}\big(\varphi_{\scriptscriptstyle {Ne}}(z)\big)=G(-1)=1-r+\frac{1}{3}r^3
\end{align*}
and
\begin{align*}
\max_{|z|=r}\mathrm{Re}\big(\varphi_{\scriptscriptstyle {Ne}}(z)\big)=G(1)=1+r-\frac{1}{3}r^3.
\end{align*}
Hence, the proof is completed.
\end{proof}
Let us denote by $\Omega_{Ne}$ the region bounded by the nephroid (\ref{Equation-of-Nephroid}), i.e.,
\begin{align*}
\Omega_{Ne}:=\left\{u+iv:\left((u-1)^2+v^2-\frac{4}{9}\right)^3-\frac{4 v^2}{3}<0\right\}.
\end{align*}

\begin{lemma}\label{Lemma-Radius-ra-NeP}
Let $1/3<a<5/3$. Let $r_a$ and $R_a$ be given by
\begin{align*}
r_a=
\begin{cases}
a-\frac{1}{3}, & \frac{1}{3}<a\leq1\\
\frac{5}{3}-a,            & 1\leq{a}<\frac{5}{3}.
\end{cases}
\end{align*}
and
\begin{align*}
R_a=\frac{1}{9} \sqrt{\frac{2\left(1+18(a-1)^2\right)^{3/2}+9a(a-2) (28+9a(a-2))+169}{(a-1)^2}}.
\end{align*}
Then
\begin{align*}
\left\{w\in\mathbb{C}:|w-a|<r_a\right\}\subseteq\Omega_{Ne}\subseteq\left\{w\in\mathbb{C}:|w-a|<R_a\right\}.
\end{align*}
\end{lemma}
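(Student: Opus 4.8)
The plan is to convert the two set inclusions into two extremal-distance computations on the boundary curve of $\Omega_{Ne}$ and then to carry those out by single-variable calculus. By the Theorem, $\varphi_{\scriptscriptstyle {Ne}}$ maps $\mathbb{D}$ onto $\Omega_{Ne}$, so $\partial\Omega_{Ne}$ is traced by $\varphi_{\scriptscriptstyle {Ne}}(e^{it})$, $t\in(-\pi,\pi]$; restricting the defining equation of the nephroid to the real axis gives $(u-1)^2=4/9$, so $a\in\Omega_{Ne}$ exactly when $1/3<a<5/3$. Since $\Omega_{Ne}$ is open and bounded, the largest disk centred at $a$ contained in $\Omega_{Ne}$ has radius $\delta(a):=\min_{t}\bigl|\varphi_{\scriptscriptstyle {Ne}}(e^{it})-a\bigr|$, and the smallest disk centred at $a$ containing $\Omega_{Ne}$ has radius $\Delta(a):=\max_{t}\bigl|\varphi_{\scriptscriptstyle {Ne}}(e^{it})-a\bigr|$. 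Hence it suffices to show $\delta(a)=r_a$ and $\Delta(a)=R_a$, after which both inclusions follow with the stated strict inequalities, because $\Omega_{Ne}$ is the open region.

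Next I would put $x=\cos t$ and, using $\cos 3t=4x^{3}-3x$ together with the identity $(u-1)^{2}+v^{2}=\tfrac49+\tfrac43(1-x^{2})$ established in the proof of the Theorem, obtain
\begin{align*}
\bigl|\varphi_{\scriptscriptstyle {Ne}}(e^{it})-a\bigr|^{2}
 =\Bigl(\tfrac{16}{9}+b^{2}\Bigr)+4bx-\tfrac43x^{2}-\tfrac83bx^{3}=:G(x),\qquad x\in[-1,1],
\end{align*}
with $b=1-a\in(-\tfrac23,\tfrac23)$. The derivative $G'(x)$ is a quadratic with roots $\bigl(-1\pm\sqrt{1+18b^{2}}\bigr)/(6b)$; a short check shows that for $0<|b|<\tfrac23$ exactly one of them, namely $x_{0}=\bigl(\sqrt{1+18b^{2}}-1\bigr)/(6b)$, lies in $[-1,1]$ (the other lies outside), and that $G''(x_{0})<0$. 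Therefore $G$ increases and then decreases on $[-1,1]$, so $\Delta(a)^{2}=G(x_{0})$ while $\delta(a)^{2}=\min\{G(-1),G(1)\}$.

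The endpoint values are immediate: $x=\pm1$ correspond to $e^{it}=\pm1$ and to the cusps $\varphi_{\scriptscriptstyle {Ne}}(1)=5/3$, $\varphi_{\scriptscriptstyle {Ne}}(-1)=1/3$, giving $G(1)=(a-5/3)^{2}$ and $G(-1)=(a-1/3)^{2}$; hence $\delta(a)=\min\{a-\tfrac13,\tfrac53-a\}=r_a$, the choice between the two branches being dictated by the sign of $b$. For the maximum I would use the relation $6bx_{0}^{2}+2x_{0}-3b=0$ to reduce $G(x_{0})$ to an expression linear in $x_{0}$, then substitute $x_{0}=\bigl(\sqrt{1+18b^{2}}-1\bigr)/(6b)$ and simplify with the aid of $a(a-2)=(a-1)^{2}-1=b^{2}-1$; this produces exactly $G(x_{0})=R_a^{2}$.

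The main obstacle is this final algebraic simplification, together with the bookkeeping needed to be sure that $x_{0}$ stays in $[-1,1]$, that the second root stays outside it, and that $x_{0}$ yields the global maximum of $G$, uniformly for $1/3<a<5/3$. This bookkeeping is conveniently organized by exploiting the reflection $w\mapsto 2-w$ (equivalently $a\mapsto 2-a$), under which the nephroid $\Omega_{Ne}$ and both quantities $r_a$, $R_a$ are invariant, so that it is enough to treat the case $1/3<a\le 1$ and deduce the case $1\le a<5/3$ by symmetry.
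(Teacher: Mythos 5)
Your proposal is correct and follows essentially the same route as the paper: parametrize $\partial\Omega_{Ne}$ by $\varphi_{\scriptscriptstyle{Ne}}(e^{it})$, write the squared distance from $(a,0)$ as a cubic in $x=\cos t$, locate the single interior critical point $x_{0}$ (a maximum, since the second derivative there is $-\tfrac83\sqrt{1+18(a-1)^2}<0$), and read off $r_a$ from the endpoint values $G(\pm1)=(a-\tfrac13)^2,(a-\tfrac53)^2$ and $R_a$ from $G(x_0)$. Your substitution $b=1-a$, the reflection symmetry $a\mapsto 2-a$, and the explicit algebraic reduction of $G(x_0)$ to $R_a^2$ are only presentational additions to what the paper does.
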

\begin{proof}
For $z=e^{it}$, the parametric equations of the nephroid $\varphi_{\scriptscriptstyle{Ne}}(z)=1+z-z^3/3$ are
\begin{align*}
u(t)=1+\cos{t}-\frac{1}{3}\cos{3t}, \quad v(t)=\sin{t}-\frac{1}{3}\sin{3t}, \quad -\pi<t\leq\pi.
\end{align*}
The square of the distance from the point $(a,0)$ to the points on the nephroid (\ref{Equation-of-Nephroid}) is given by
\begin{align*}
z(t)&=(a-u(t))^2+(v(t))^2\\
     &=\frac{16}{9}+(a-1)^2-4(a-1)\cos{t}-\frac{4}{3}\cos^2{t}+\frac{8}{3}(a-1)\cos^3{t}\\
      &=\frac{16}{9}+(a-1)^2-4(a-1)x-\frac{4}{3}x^2+\frac{8}{3}(a-1)x^3=:H(x),
\end{align*}
where $x=\cos{t},\,-\pi<t\leq\pi$. Since the curve (\ref{Equation-of-Nephroid}) is symmetric about the real axis, it is sufficient to consider $0\leq{t}\leq\pi$. A simple computation shows that $H'(x)=0$ if, and only if,
\begin{align*}
x=\frac{1\pm\sqrt{1+18(a-1)^2}}{6(a-1)}.
\end{align*}
For $1/3<a<5/3$, only the number $x=x_0=\left(1-\sqrt{1+18(a-1)^2}\right)/6(a-1)$ lies between $-1$ and $1$. Further,
\begin{align*}
H''(x_0)=-\frac{8}{3}\sqrt{1+18(a-1)^2}<0 ~\text{ for each } ~a.
\end{align*}
This shows that $x_0$ is the point of maxima for the function $H(x)$ and hence, $H(x)$ is increasing in the interval $[-1,x_0]$ and decreasing in $[x_0,1]$. Therefore, for $1/3<a<5/3$, we have
\begin{align*}
\min_{0\leq{t}\leq\pi}{z(t)}=\min\left\{H(-1),H(1)\right\}
\end{align*}
and
\begin{align*}
\max_{0\leq{t}\leq\pi}{z(t)}=H(x_0).
\end{align*}
Since
\begin{align*}
H(1)-H(-1)=-\frac{8(a-1)}{3},
\end{align*}
so that $H(-1)\leq{H(1)}$ whenever $a\leq1$ and $H(1)\leq{H(-1)}$ whenever $a\geq1$, we conclude that
\begin{align*}
r_{a}=\min_{0\leq{t}\leq\pi}\sqrt{z(t)}=
\begin{cases}
\sqrt{H(-1)}=a-\frac{1}{3}, &\text{ whenever } \frac{1}{3}<a\leq1\\
\sqrt{H(1)}=\frac{5}{3}-a,   &\text{ whenever } 1\leq{a}<\frac{5}{3}.
\end{cases}
\end{align*}
Also,
\begin{align*}
R_{a}=\max_{0\leq{t}\leq\pi}\sqrt{z(t)}=\sqrt{H(x_0)}.
\end{align*}
This completes the proof of the lemma.
\end{proof}
We pose the following problem.
\begin{problem}
To prove that if $-4/3<a<4/3$ and
\begin{align*}
\hat{R}_a=
\begin{cases}
\frac{4}{3}-a, & -\frac{4}{3}<a\leq0\\
a+\frac{4}{3},            & 0\leq{a}<\frac{4}{3},
\end{cases}
\end{align*}
then
\begin{align*}
\Omega_{Ne}\subseteq\left\{w\in\mathbb{C}:|w-(1+ia)|<\hat{R}_a\right\}.
\end{align*}
\end{problem}
\begin{remark}
The largest disk centered at $z=1$ that is contained in $\Omega_{Ne}$ is $\left\{w:\left|w-1\right|<2/3\right\}$ and the smallest disk centered at $z=1$ that contains $\Omega_{Ne}$ is $\left\{w:\left|w-1\right|<4/3\right\}$.
\end{remark}
We now describe the structural formulas and the extremal functions for the classes $\mathcal{S}^*_{Ne}$ and $\mathcal{C}_{Ne}$, and construct some examples.
\begin{theorem}\label{Theorem-Structural-Formulas-NeP}
A function $f$ belongs to the class $\mathcal{S}^*_{Ne}$ if, and only if, there exists an analytic function $q(z)$, satisfying $q(z)\prec\varphi_{\scriptscriptstyle {Ne}}(z)=1+z-z^3/3$ such that
\begin{align}\label{Eq1-Structural-Formulas-NeP}
f(z)=z\exp\left(\int_0^z\frac{q(\zeta)-1}{\zeta}d\zeta\right), \quad z\in\mathbb{D}.
\end{align}
\end{theorem}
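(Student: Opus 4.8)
The plan is to prove the two implications of the stated equivalence directly: the forward implication by unwinding the subordination $zf'(z)/f(z)\prec\varphi_{\scriptscriptstyle {Ne}}(z)$ into a logarithmic derivative and integrating, and the converse by differentiating the proposed representation (\ref{Eq1-Structural-Formulas-NeP}) and reading off $zf'(z)/f(z)$. Since $\varphi_{\scriptscriptstyle {Ne}}$ has already been shown to satisfy conditions (i)--(iv), we may freely use that $\mathcal{S}^*_{Ne}\subset\mathcal{S}^*$; in particular every $f\in\mathcal{S}^*_{Ne}$ is univalent and nonvanishing on $\mathbb{D}\setminus\{0\}$.

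First I would treat the necessity. Let $f\in\mathcal{S}^*_{Ne}$ and put $q(z):=zf'(z)/f(z)$. Because $f\in\mathcal{S}^*$, the quotient $f(z)/z$ is analytic and nonvanishing on $\mathbb{D}$, so $q$ is analytic on $\mathbb{D}$, $q(0)=1$, and, by definition of $\mathcal{S}^*_{Ne}$, $q\prec\varphi_{\scriptscriptstyle {Ne}}$. Since $q(0)=1$, the function $\bigl(q(\zeta)-1\bigr)/\zeta$ has a removable singularity at the origin and hence extends analytically to $\mathbb{D}$. Now
\[
\frac{q(\zeta)-1}{\zeta}=\frac{f'(\zeta)}{f(\zeta)}-\frac{1}{\zeta}=\frac{d}{d\zeta}\log\frac{f(\zeta)}{\zeta},
\]
where $\log\bigl(f(\zeta)/\zeta\bigr)$ is the analytic branch on the simply connected domain $\mathbb{D}$ normalized by $\log\bigl(f(\zeta)/\zeta\bigr)\big|_{\zeta=0}=0$ (legitimate since $f(\zeta)/\zeta$ is analytic, nonvanishing, and equals $1$ at $\zeta=0$). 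Integrating from $0$ to $z$ and exponentiating gives exactly (\ref{Eq1-Structural-Formulas-NeP}) with this $q$.

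For sufficiency, suppose $q$ is analytic with $q\prec\varphi_{\scriptscriptstyle {Ne}}$ (hence $q(0)=\varphi_{\scriptscriptstyle {Ne}}(0)=1$), and define $f$ by (\ref{Eq1-Structural-Formulas-NeP}). As above, $\bigl(q(\zeta)-1\bigr)/\zeta$ is analytic on $\mathbb{D}$, so setting $P(z):=\int_0^z\bigl(q(\zeta)-1\bigr)/\zeta\,d\zeta$ we have $P\in\mathcal{H}$, $P(0)=0$, and $f(z)=z\,e^{P(z)}\in\mathcal{H}$ with $f(z)\neq0$ on $\mathbb{D}\setminus\{0\}$. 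Then $f(0)=0$ and $f'(z)=e^{P(z)}\bigl(1+zP'(z)\bigr)$ gives $f'(0)=e^{P(0)}=1$, so $f\in\mathcal{A}$. Finally, using $P'(z)=\bigl(q(z)-1\bigr)/z$,
\[
\frac{zf'(z)}{f(z)}=1+zP'(z)=1+\bigl(q(z)-1\bigr)=q(z)\prec\varphi_{\scriptscriptstyle {Ne}}(z),
\]
which is precisely the condition $f\in\mathcal{S}^*_{Ne}$.

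The argument is essentially bookkeeping, and no special property of $\varphi_{\scriptscriptstyle {Ne}}$ beyond $\varphi_{\scriptscriptstyle {Ne}}(0)=1$ enters; the only point requiring a little care is the legitimacy of the primitive, namely that $\bigl(q(\zeta)-1\bigr)/\zeta$ has a removable singularity at the origin and that $\log\bigl(f(\zeta)/\zeta\bigr)$ admits a single-valued analytic branch on $\mathbb{D}$ with the correct normalization. Both follow from analyticity on the simply connected disk together with the normalizations $q(0)=1$ and $f(z)/z\big|_{z=0}=1$. I would also remark that the same proof yields the analogous structural formula for an arbitrary Ma--Minda class $\mathcal{S}^*(\varphi)$, and, combined with Alexander's theorem, the corresponding representation for $\mathcal{C}_{Ne}$.
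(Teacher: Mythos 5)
Your argument is correct and follows essentially the same route as the paper: define $q(z)=zf'(z)/f(z)$, integrate the identity $\frac{d}{dz}\log\bigl(f(z)/z\bigr)=(q(z)-1)/z$ for necessity, and retrace (differentiate the representation) for sufficiency. You merely spell out the bookkeeping the paper leaves implicit, namely the removable singularity at the origin and the normalized branch of the logarithm, which is a welcome but not substantively different addition.
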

\begin{proof}
Let $f\in\mathcal{S}^*_{Ne}$. Then it follows from the definition of $\mathcal{S}^*_{Ne}$ that the function
\begin{align}\label{Eq2-Structural-Formulas-NeP}
q(z)=\frac{zf'(z)}{f(z)}
\end{align}
is analytic in $\mathbb{D}$ and $q(z)\prec\varphi_{\scriptscriptstyle {Ne}}(z)$. On simplifying (\ref{Eq2-Structural-Formulas-NeP}), we get
\begin{align*}
\frac{d}{dz}\left(\log\frac{f(z)}{z}\right)=\frac{q(z)-1}{z},
\end{align*}
which, by simple calculations provide (\ref{Eq1-Structural-Formulas-NeP}). Converse follows by retracing the procedure, from (\ref{Eq1-Structural-Formulas-NeP}).
\end{proof}
Using the Alexander type relation between $\mathcal{S}^*_{Ne}$ and $\mathcal{C}_{Ne}$, the following result is immediate.
\begin{corollary}
A function $f$ belongs to the class $\mathcal{C}_{Ne}$ if, and only if, there exists an analytic functions $q(z)$, satisfying $q(z)\prec\varphi_{\scriptscriptstyle {Ne}}(z)$ such that
\begin{align}\label{Eq3-Structural-Formulas-NeP}
f(z)=\int_{0}^{z}\left(\exp\int_0^\zeta\frac{q(\eta)-1}{\eta}d\eta\right)d\zeta, \quad z\in\mathbb{D}.
\end{align}
\end{corollary}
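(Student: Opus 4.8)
The plan is to reduce the corollary to \Cref{Theorem-Structural-Formulas-NeP} by means of the Alexander-type correspondence between $\mathcal{C}_{Ne}$ and $\mathcal{S}^*_{Ne}$ recorded just before the statement. First I would note that, for $f\in\mathcal{A}$, setting $g(z)=zf'(z)$ gives $g\in\mathcal{A}$: indeed $g(0)=0$ and, since $g'(z)=f'(z)+zf''(z)$, we have $g'(0)=f'(0)=1$. Moreover
\[
\frac{zg'(z)}{g(z)}=\frac{z\bigl(f'(z)+zf''(z)\bigr)}{zf'(z)}=1+\frac{zf''(z)}{f'(z)},
\]
so that $g\in\mathcal{S}^*_{Ne}$ if and only if $f\in\mathcal{C}_{Ne}$; this is exactly the stated Alexander relation.

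Next I would apply \Cref{Theorem-Structural-Formulas-NeP} to $g$: the membership $g\in\mathcal{S}^*_{Ne}$ holds precisely when there is an analytic function $q$ with $q(z)\prec\varphi_{\scriptscriptstyle{Ne}}(z)$ and
\[
g(z)=z\exp\left(\int_0^z\frac{q(\zeta)-1}{\zeta}\,d\zeta\right),\qquad z\in\mathbb{D}.
\]
Because $g(z)=zf'(z)$, dividing by $z$ (legitimate since the right-hand side also vanishes only like $z$ at the origin) yields
\[
f'(z)=\exp\left(\int_0^z\frac{q(\zeta)-1}{\zeta}\,d\zeta\right),\qquad z\in\mathbb{D}.
\]
Here one should record that $q(0)=1$ forces the integrand $(q(\zeta)-1)/\zeta$ to have a removable singularity at $\zeta=0$, so the inner integral is a well-defined analytic function on $\mathbb{D}$ and the displayed right-hand side is a nonvanishing analytic function.

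Finally, integrating the last identity along a path from $0$ to $z$ and invoking the normalization $f(0)=0$ produces
\[
f(z)=\int_{0}^{z}\left(\exp\int_0^\zeta\frac{q(\eta)-1}{\eta}\,d\eta\right)d\zeta, \qquad z\in\mathbb{D},
\]
which is (\ref{Eq3-Structural-Formulas-NeP}). The converse is obtained by retracing these steps: starting from an $f$ of the form (\ref{Eq3-Structural-Formulas-NeP}), differentiation recovers $f'$, multiplication by $z$ puts $g=zf'$ into the form of \Cref{Theorem-Structural-Formulas-NeP}, hence $g\in\mathcal{S}^*_{Ne}$, and the Alexander relation gives $f\in\mathcal{C}_{Ne}$. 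No step is genuinely difficult; the only points requiring a little care are verifying $g'(0)=1$ so that $g\in\mathcal{A}$ and \Cref{Theorem-Structural-Formulas-NeP} indeed applies, and the bookkeeping of the constant of integration in passing from $f'$ to $f$, which is fixed by $f(0)=0$.
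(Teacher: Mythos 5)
Your proof is correct and follows exactly the route the paper intends: the paper states this corollary as an immediate consequence of \Cref{Theorem-Structural-Formulas-NeP} via the Alexander relation $f\in\mathcal{C}_{Ne}\iff zf'\in\mathcal{S}^*_{Ne}$, and you have simply written out the details (applying the theorem to $g=zf'$, dividing by $z$, and integrating with $f(0)=0$) that the paper leaves unstated.
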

Next, consider the analytic functions $q_i:\mathbb{D}\to\mathbb{C}$, $i=1,2,3,4,5$, defined as follows.
\begin{align*}
q_1(z):&=\sqrt{1+cz} \text{ with } 0<c\leq8/9,\; q_2(z):=1+\frac{z}{2}\sin{z},\; q_3(z):=1+\frac{2}{9}z{e^z},\\
q_4(z):&=\frac{2}{1+e^{-z}} \text{ and } q_5(z):=\frac{5+3 z^2}{5+z^2}, \text{ for } z\in\mathbb{D}.
\end{align*}
For each $i\;(i=1,2,3,4,5)$, it is easy to verify that $q_i(0)=1=\varphi_{\scriptscriptstyle {Ne}}(0)$ and $q_i(\mathbb{D})\subset\varphi_{\scriptscriptstyle {Ne}}(\mathbb{D})$. Since the function $\varphi_{\scriptscriptstyle {Ne}}(z)=1+z-z^3/3$ is univalent, we conclude that $q_i(z)\prec\varphi_{\scriptscriptstyle {Ne}}(z)$ for every $i\;(i=1,2,3,4,5)$. Consequently, it follows from \Cref{Theorem-Structural-Formulas-NeP} that the functions $f_i(z)\;(i=1,2,3,4,5)$ given below are members of the class $\mathcal{S}^*_{Ne}$.
\begin{align*}
f_1(z)&=\frac{4z\exp\left(2\sqrt{1+cz}-2\right)}{\left(\sqrt{1+cz}+1\right)^2},\quad 0<c\leq{8}/{9},\\
f_2(z)&=z\exp\left(\frac{1-\cos{z}}{2}\right),\\
f_3(z)&=z\exp\left(\frac{2e^z-2}{9}\right),\\
f_4(z)&=z+\frac{z^2}{2}+\frac{z^3}{8}+\frac{z^4}{144}-\frac{5z^5}{1152}-\frac{37z^6}{57600}
              +\frac{509z^7}{2073600}+\cdots,\\
f_5(z)&=\frac{z}{5}\left(z^2+5\right).
\end{align*}
In particular, if we choose $q(z)=\varphi_{\scriptscriptstyle {Ne}}(z)=1+z-z^3/3$, then the function
\begin{align*}
f_{\scriptscriptstyle {Ne}}(z):=z{e^{z-\frac{z^3}{9}}}=z+z^2+\frac{z^3}{2}+\frac{z^4}{18}-\frac{5 z^5}{72}-\frac{17 z^6}{360}-\frac{71 z^7}{6480}+\cdots
\end{align*}
is a member of the class $\mathcal{S}^*_{Ne}$. In the sequel, we will see that the function $f_{\scriptscriptstyle {Ne}}(z)=z{e^{z-\frac{z^3}{9}}}$ plays the role of an extremal function for several extremal problems in the class $\mathcal{S}^*_{Ne}$. Furthermore, the corresponding functions belonging to the class $\mathcal{C}_{Ne}$ can be easily constructed using (\ref{Eq3-Structural-Formulas-NeP}). In particular, the function
\begin{align*}
\widehat{f}_{\scriptscriptstyle{Ne}}(z):=\int_{0}^{z}\frac{f_{\scriptscriptstyle {Ne}}(\zeta)}{\zeta}d\zeta
                        =\int_{0}^{z}e^{\zeta-\frac{\zeta^3}{9}}d\zeta
                                 =z+\frac{z^2}{2}+\frac{z^3}{6}+\frac{z^4}{72}-\frac{z^5}{72}-\frac{17 z^6}{2160}+\cdots
\end{align*}
is a member of the class $\mathcal{C}_{Ne}$. The function $\widehat{f}_{\scriptscriptstyle {Ne}}(z)$ plays central role in $\mathcal{C}_{Ne}$ for the extremal problems.

Following Ma and Minda \cite[Section 3]{Ma-Minda-1992-A-unified-treatment}, the following results are easy to establish and hence we omit the proof.
\begin{theorem}
Let $f\in\mathcal{S}^*_{Ne}$ and let $|z|=r<1$. Then
\begin{enumerate}[{\rm(i)}]
\item Subordination results: $\frac{zf'(z)}{f(z)}\prec\frac{zf'_{\scriptscriptstyle {Ne}}(z)}{f_{\scriptscriptstyle {Ne}}(z)}$ and $\frac{f(z)}{z}\prec\frac{f_{\scriptscriptstyle{Ne}}(z)}{z}$.
\item Growth Theorem: $-f_{\scriptscriptstyle {Ne}}(-r)\leq|f(z)|\leq{f_{\scriptscriptstyle {Ne}}(r)}$. Equality holds for some non-zero $z$ if, and only if, $f$ is a rotation of $f_{\scriptscriptstyle{Ne}}(z)$.
\item Covering Theorem: Either $f$ is a rotation of $f_{\scriptscriptstyle {Ne}}$ or $f(\mathbb{D})$ contains the disk $\Delta^*=\left\{w:|w|<-f_{\scriptscriptstyle {Ne}}(-1)=e^\frac{-8}{9}\approx0.411112\right\}$.
\item Rotation Theorem: $\left|\arg\frac{f(z)}{z}\right|\leq
    \displaystyle{\max_{|z|=r}}\;\arg\left(\frac{f_{\scriptscriptstyle{Ne}}(z)}{z}\right)$. Equality holds for some non-zero $z$ if, and only if, $f$ is a rotation of $f_{\scriptscriptstyle{Ne}}(z)$.
\end{enumerate}
\end{theorem}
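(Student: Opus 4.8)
My plan is to establish the two subordinations of part (i) first and then read off parts (ii)--(iv); the only genuinely new input is the second subordination $f(z)/z\prec f_{\scriptscriptstyle{Ne}}(z)/z$, and everything after it is routine in the style of \cite{Ma-Minda-1992-A-unified-treatment}.

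The first subordination is immediate: if $f\in\mathcal{S}^*_{Ne}$ then $zf'(z)/f(z)\prec\varphi_{\scriptscriptstyle{Ne}}(z)$ by definition, while differentiating $f_{\scriptscriptstyle{Ne}}(z)=z\,e^{z-z^3/9}$ gives $f_{\scriptscriptstyle{Ne}}'(z)=e^{z-z^3/9}\bigl(1+z-z^3/3\bigr)$, hence $zf_{\scriptscriptstyle{Ne}}'(z)/f_{\scriptscriptstyle{Ne}}(z)=\varphi_{\scriptscriptstyle{Ne}}(z)$ and the two right-hand sides coincide. For the second subordination I set $q(z):=zf'(z)/f(z)\prec\varphi_{\scriptscriptstyle{Ne}}(z)$ and use \Cref{Theorem-Structural-Formulas-NeP} to write $\log\bigl(f(z)/z\bigr)=\int_0^z\frac{q(\zeta)-1}{\zeta}\,d\zeta$, together with the identity $\log\bigl(f_{\scriptscriptstyle{Ne}}(z)/z\bigr)=\int_0^z\frac{\varphi_{\scriptscriptstyle{Ne}}(\zeta)-1}{\zeta}\,d\zeta=z-\frac{z^3}{9}=:M(z)$.

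The heart of the argument is the subordination $\log(f(z)/z)\prec M(z)$, and I would prove it as follows. First, $M$ is convex univalent in $\mathbb{D}$: it is the Alexander transform of $\varphi_{\scriptscriptstyle{Ne}}(z)-1=z-z^3/3$, which was shown in the Introduction to be starlike, so Alexander's theorem applies (alternatively, $1+zM''(z)/M'(z)=3(1-z^2)/(3-z^2)$ has positive real part on $\mathbb{D}$, by the estimate already used there). Next, since $q(\zeta)-1\prec\varphi_{\scriptscriptstyle{Ne}}(\zeta)-1$, I claim the operator $g\mapsto\int_0^z g(\zeta)/\zeta\,d\zeta$ transports this subordination, giving $\int_0^z\frac{q(\zeta)-1}{\zeta}\,d\zeta\prec M(z)$. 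If it did not, the Miller--Mocanu subordination lemma --- applicable since $M$ is convex univalent and, being a polynomial, extends analytically past $\partial\mathbb{D}$ --- would furnish $z_0\in\mathbb{D}$, $|\zeta_0|=1$ and $m\geq1$ with $q(z_0)-1=m\,\zeta_0 M'(\zeta_0)=m\bigl(\varphi_{\scriptscriptstyle{Ne}}(\zeta_0)-1\bigr)$; but $\varphi_{\scriptscriptstyle{Ne}}(\mathbb{D})-1$ is starlike with respect to $0$ (this is Ma--Minda's condition (ii)) and $\varphi_{\scriptscriptstyle{Ne}}(\zeta_0)-1$ is a boundary point of it, so $m\bigl(\varphi_{\scriptscriptstyle{Ne}}(\zeta_0)-1\bigr)\notin\varphi_{\scriptscriptstyle{Ne}}(\mathbb{D})-1$ for $m\geq1$, contradicting $q(z_0)-1\in\varphi_{\scriptscriptstyle{Ne}}(\mathbb{D})-1$. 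Composing $\log(f(z)/z)\prec M(z)$ with $\exp$ then yields $f(z)/z\prec e^{M(z)}=f_{\scriptscriptstyle{Ne}}(z)/z$. I expect this step --- which is where the starlikeness of $\varphi_{\scriptscriptstyle{Ne}}$ is essential --- to be the main obstacle; since every function here is a polynomial, no extra approximation argument is needed.

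It remains to deduce (ii)--(iv). Write $f(z)/z=e^{M(w(z))}$ for a Schwarz function $w$, so that $|w(z)|\leq|z|=r$ and $|f(z)/z|=e^{\mathrm{Re}\,M(w(z))}$; as $\mathrm{Re}\,M$ is harmonic, $\min_{|\zeta|=r}\mathrm{Re}\,M(\zeta)\leq\mathrm{Re}\,M(w(z))\leq\max_{|\zeta|=r}\mathrm{Re}\,M(\zeta)$. A short one-variable computation --- the $t$-derivative of $r\cos t-\tfrac{r^{3}}{9}\cos 3t$ factors as $\sin t\,\bigl(r^{3}-r-\tfrac{4r^{3}}{3}\sin^{2}t\bigr)$, and the second factor has no zero for $0<r<1$ --- shows these extrema are attained only at $\zeta=\pm r$, which gives $-f_{\scriptscriptstyle{Ne}}(-r)/r\leq|f(z)/z|\leq f_{\scriptscriptstyle{Ne}}(r)/r$, i.e.\ the growth theorem; equality for some $z\neq0$ forces equality in Schwarz's lemma, hence $w(z)=e^{i\alpha}z$ and $f$ a rotation of $f_{\scriptscriptstyle{Ne}}$, and the converse is immediate. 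Letting $r\to1^{-}$ in the lower bound, every boundary point of $f(\mathbb{D})$ has modulus at least $-f_{\scriptscriptstyle{Ne}}(-1)=e^{-8/9}$, which gives the covering theorem (the bound being attained only for rotations of $f_{\scriptscriptstyle{Ne}}$). Finally $\arg\bigl(f(z)/z\bigr)=\mathrm{Im}\,M(w(z))$, and the same harmonicity together with the companion computation (the extrema of $\mathrm{Im}\,M$ on $|\zeta|=r$ occur at $\zeta=\pm ir$) gives $\bigl|\arg(f(z)/z)\bigr|\leq\max_{|\zeta|=r}\mathrm{Im}\,M(\zeta)=\max_{|z|=r}\arg\bigl(f_{\scriptscriptstyle{Ne}}(z)/z\bigr)$, with the same equality case.
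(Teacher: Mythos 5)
Your argument is correct, but it is genuinely more than what the paper does: the paper simply invokes Ma--Minda \cite[Section 3]{Ma-Minda-1992-A-unified-treatment}, whose general Theorems apply verbatim once $\varphi_{\scriptscriptstyle{Ne}}$ is known to satisfy conditions (i)--(iv) (verified in the Introduction), and omits the proof entirely. You instead reprove the needed machinery for this specific $\varphi_{\scriptscriptstyle{Ne}}$: the identity $zf_{\scriptscriptstyle{Ne}}'/f_{\scriptscriptstyle{Ne}}=\varphi_{\scriptscriptstyle{Ne}}$ disposes of the first subordination; the key step $\log(f/z)\prec M$, $M(z)=z-z^3/9$, is Suffridge's integration-of-subordination theorem, which you rederive via the Miller--Mocanu boundary lemma --- this is sound, since $M$ is univalent on $\overline{\mathbb{D}}$ (so the lemma applies; note that only this univalence, not the convexity of $M$, is what the lemma needs) and the contradiction uses exactly the starlikeness of $\varphi_{\scriptscriptstyle{Ne}}(\mathbb{D})-1$ about $0$, with $m\omega_0\notin\varphi_{\scriptscriptstyle{Ne}}(\mathbb{D})-1$ for a boundary point $\omega_0$ and $m\geq1$ because the domain is open and starlike. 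Your trigonometric localizations of the extrema of $\mathrm{Re}\,M$ and $\mathrm{Im}\,M$ on $|\zeta|=r$ check out (the factors $r^3-r-\tfrac{4r^3}{3}\sin^2 t<0$ and $1+r^2-\tfrac{4r^2}{3}\cos^2 t\geq 1-\tfrac{r^2}{3}>0$ are correct), and the Schwarz-lemma treatment of the equality cases and the limiting argument for the covering theorem are the standard ones; in fact your covering argument gives the disk containment for every $f\in\mathcal{S}^*_{Ne}$, slightly more than the stated either/or. What the paper's route buys is brevity and uniformity (nothing beyond (i)--(iv) is ever used); what yours buys is a self-contained proof with explicit constants, at the cost of redoing Suffridge's lemma and the circle computations --- the latter could be shortened by observing that $M$ is convex with real coefficients, so $\mathrm{Re}\,M$ on $|\zeta|\leq r$ is automatically extremal at $\pm r$, which is essentially how the general Ma--Minda argument runs.
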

\begin{figure}[h!]
 \includegraphics{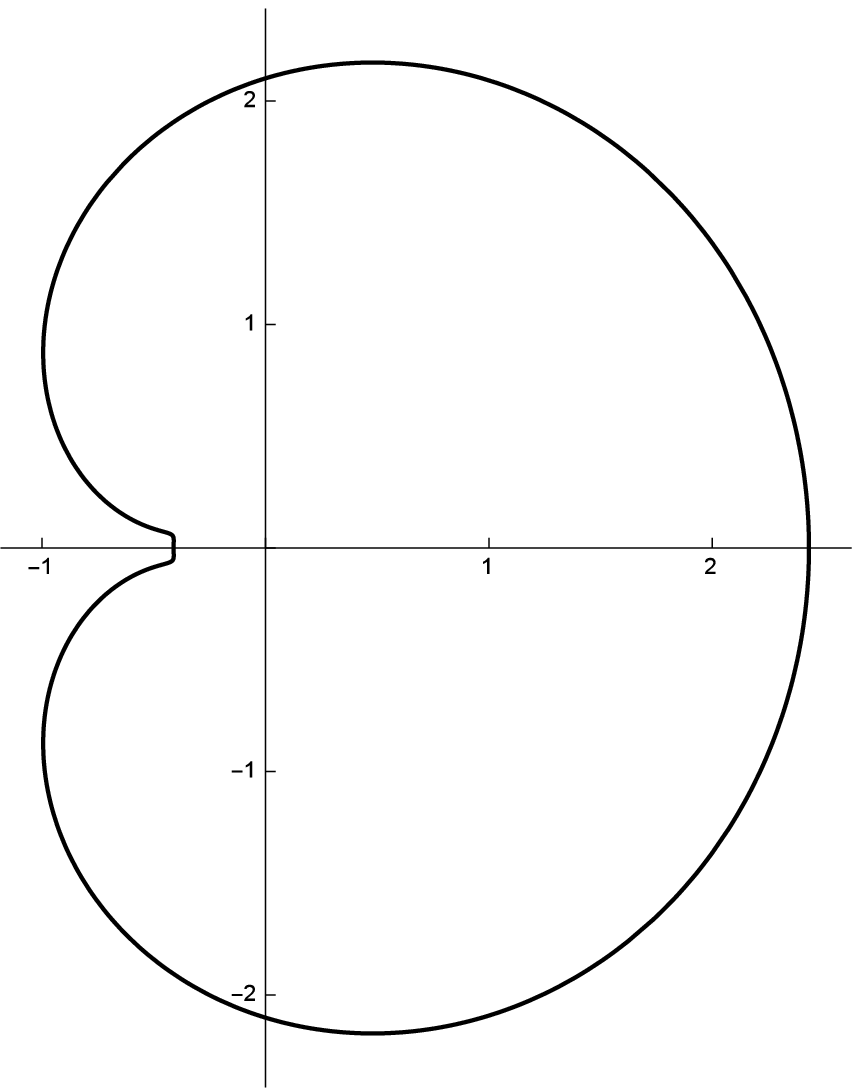}
 \caption{Boundary curve of $f_{\scriptscriptstyle{Ne}}(\mathbb{D})$, where $f_{\scriptscriptstyle{Ne}}(z)=z e^{z-\frac{z^3}{9}}$.}
 \label{Figure-Extremal-Function-fNe}
\end{figure}
\begin{theorem}
Let $f\in\mathcal{C}_{Ne}$ and let $|z|=r<1$. Then
\begin{enumerate}[{\rm(i)}]
\item Subordination results: $\frac{zf'(z)}{f(z)}\prec\frac{z\widehat{f}'_{\scriptscriptstyle {Ne}}(z)}{\widehat{f}_{\scriptscriptstyle {Ne}}(z)}$ and $\frac{f(z)}{z}\prec\frac{\widehat{f}_{\scriptscriptstyle{Ne}}(z)}{z}$.
\item Distortion Theorem: $\widehat{f}'_{\scriptscriptstyle {Ne}}(-r)\leq|f'(z)|\leq{\widehat{f}'_{\scriptscriptstyle {Ne}}(r)}$.
\item Growth Theorem: $-\widehat{f}_{\scriptscriptstyle {Ne}}(-r)\leq|f(z)|\leq{\widehat{f}_{\scriptscriptstyle {Ne}}(r)}$.
\item Covering Theorem: Either $f$ is a rotation of $\widehat{f}_{\scriptscriptstyle {Ne}}$ or $f(\mathbb{D})$ contains the disk $\Delta^c=\left\{w:|w|<-\widehat{f}_{\scriptscriptstyle {Ne}}(-1)\approx0.645159\right\}$.
\item Rotation Theorem: $\left|\arg\frac{f(z)}{z}\right|\leq
    \displaystyle{\max_{|z|=r}}\;\arg\left(\frac{\widehat{f}_{\scriptscriptstyle{Ne}}(z)}{z}\right)$.
\end{enumerate}
Equality in (ii), (iii) and (iv) holds for some non-zero $z$ if, and only if, $f$ is a rotation of $\widehat{f}_{\scriptscriptstyle{Ne}}(z)$.
\end{theorem}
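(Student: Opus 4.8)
The plan is to deduce every assertion from the corresponding facts about $\mathcal{S}^*_{Ne}$ by means of the Alexander-type equivalence $f\in\mathcal{C}_{Ne}\iff zf'\in\mathcal{S}^*_{Ne}$, precisely along the lines of \cite[Section~3]{Ma-Minda-1992-A-unified-treatment}. The key bookkeeping observation is that $\widehat{f}_{\scriptscriptstyle {Ne}}(z)=\int_0^z f_{\scriptscriptstyle {Ne}}(\zeta)/\zeta\,d\zeta$ forces $z\widehat{f}'_{\scriptscriptstyle {Ne}}(z)=f_{\scriptscriptstyle {Ne}}(z)$; hence $\widehat{f}_{\scriptscriptstyle {Ne}}$ is the extremal function of $\mathcal{C}_{Ne}$ in exactly the sense in which $f_{\scriptscriptstyle {Ne}}$ is the extremal function of $\mathcal{S}^*_{Ne}$, and moreover $\widehat{f}'_{\scriptscriptstyle {Ne}}(\pm r)=f_{\scriptscriptstyle {Ne}}(\pm r)/(\pm r)=e^{\pm r\mp r^3/9}>0$ for $0<r<1$.

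For (i), put $g=zf'\in\mathcal{S}^*_{Ne}$ and $p(z)=zf'(z)/f(z)$ with $p(0)=1$. Differentiating $\log p=\log z+\log f'-\log f$ produces the first-order differential subordination
\begin{align*}
p(z)+\frac{zp'(z)}{p(z)}=1+\frac{zf''(z)}{f'(z)}\prec\varphi_{\scriptscriptstyle {Ne}}(z),
\end{align*}
whereas $P(z)=z\widehat{f}'_{\scriptscriptstyle {Ne}}(z)/\widehat{f}_{\scriptscriptstyle {Ne}}(z)$, with $P(0)=1$, is the solution of $P+zP'/P=\varphi_{\scriptscriptstyle {Ne}}$. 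The Briot--Bouquet differential subordination lemma then gives $p\prec P$, which is the first subordination in (i). The second, $f(z)/z\prec\widehat{f}_{\scriptscriptstyle {Ne}}(z)/z$, follows from the first via the identity $\log(f(z)/z)=\int_0^z\zeta^{-1}\big(\zeta f'(\zeta)/f(\zeta)-1\big)\,d\zeta$ and the subordination-preserving property of this integral operator, whose dominant $\log(\widehat{f}_{\scriptscriptstyle {Ne}}(z)/z)$ is convex univalent (reflecting $\mathcal{C}_{Ne}\subset\mathcal{C}$). The step I expect to carry the real weight is verifying the admissibility hypotheses of the Briot--Bouquet lemma for $\varphi_{\scriptscriptstyle {Ne}}$; this is, however, the same type of check required for every Ma--Minda class recalled in the introduction, and the sharp estimates of \Cref{Lemma-Bounds-Re(varphiNe)-NeP} together with $\Omega_{Ne}\subset\{u>0\}$ furnish exactly the inequalities needed.

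The remaining parts are routine consequences. Applying the growth theorem for $\mathcal{S}^*_{Ne}$ to $g=zf'$ gives $-f_{\scriptscriptstyle {Ne}}(-r)\le|zf'(z)|\le f_{\scriptscriptstyle {Ne}}(r)$ on $|z|=r$; dividing by $r$ and using $\widehat{f}'_{\scriptscriptstyle {Ne}}(\pm r)=f_{\scriptscriptstyle {Ne}}(\pm r)/(\pm r)$ yields the distortion bounds (ii). Integrating the upper bound of (ii) along the radius to $z$ gives $|f(z)|\le\int_0^r\widehat{f}'_{\scriptscriptstyle {Ne}}(t)\,dt=\widehat{f}_{\scriptscriptstyle {Ne}}(r)$, the upper half of (iii); for the lower half one uses the convexity of $f(\mathbb{D})$ (valid since $\mathcal{C}_{Ne}\subset\mathcal{C}$): with $|z|=r$, the segment $[0,f(z)]$ lies in $f(\mathbb{D})$, its $f$-preimage is an arc $\Gamma$ joining $0$ to $z$ in $\mathbb{D}$, and
\begin{align*}
|f(z)|=\int_\Gamma|f'(\zeta)|\,|d\zeta|\ge\int_\Gamma\widehat{f}'_{\scriptscriptstyle {Ne}}(-|\zeta|)\,|d\zeta|\ge\int_0^r\widehat{f}'_{\scriptscriptstyle {Ne}}(-t)\,dt=-\widehat{f}_{\scriptscriptstyle {Ne}}(-r),
\end{align*}
the middle inequality using $\widehat{f}'_{\scriptscriptstyle {Ne}}(-t)>0$ on $(0,1)$ and $|d\zeta|\ge\big|d|\zeta|\big|$. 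Letting $r\to1^-$ in this lower bound produces the covering disk $\Delta^c$ of (iv), and (v) is immediate from the second subordination in (i) combined with the maximum principle for the harmonic function $\arg\big(\widehat{f}_{\scriptscriptstyle {Ne}}(z)/z\big)$.

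Finally, for the rigidity clause: equality in (ii), (iii) or (iv) at some $z_0\ne0$ forces the Schwarz function governing the relevant subordination to be a rotation $\omega(z)=e^{i\theta}z$, so that $1+z f''(z)/f'(z)=\varphi_{\scriptscriptstyle {Ne}}(e^{i\theta}z)$ and integration identifies $f$ with the corresponding rotation of $\widehat{f}_{\scriptscriptstyle {Ne}}$; the converse is clear by direct substitution. This is the standard extremality argument of \cite[Section~3]{Ma-Minda-1992-A-unified-treatment}, which is why the statement was recorded above without proof, and nothing beyond the admissibility check flagged in the second paragraph is genuinely new here.
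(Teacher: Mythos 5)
Your route is the same one the paper intends: the paper offers no proof at all, deferring to Ma--Minda's Section~3 via the Alexander relation, and most of your reconstruction is exactly that machinery done correctly --- Briot--Bouquet with $\mathrm{Re}\,\varphi_{\scriptscriptstyle{Ne}}>0$ for the first subordination in (i), part (ii) from the $\mathcal{S}^*_{Ne}$ growth theorem applied to $zf'$ using $z\widehat{f}'_{\scriptscriptstyle{Ne}}=f_{\scriptscriptstyle{Ne}}$, radial integration and the convexity-of-the-image argument for (iii), the limit $r\to1^-$ for (iv), and the Schwarz-function rigidity analysis.

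The one step whose justification does not hold up is the second subordination in (i), on which (v) also rests. You pass from $zf'/f\prec z\widehat{f}'_{\scriptscriptstyle{Ne}}/\widehat{f}_{\scriptscriptstyle{Ne}}$ to $f/z\prec\widehat{f}_{\scriptscriptstyle{Ne}}/z$ by integrating $\zeta^{-1}\bigl(\zeta f'/f-1\bigr)$ and invoking an integral subordination-preservation lemma whose hypothesis you assert as ``$\log(\widehat{f}_{\scriptscriptstyle{Ne}}(z)/z)$ is convex univalent (reflecting $\mathcal{C}_{Ne}\subset\mathcal{C}$)''. That parenthetical is a non sequitur: convexity of $\widehat{f}_{\scriptscriptstyle{Ne}}$ does not imply convexity of $\log(\widehat{f}_{\scriptscriptstyle{Ne}}(z)/z)$; it only gives $\mathrm{Re}\bigl(z\widehat{f}'_{\scriptscriptstyle{Ne}}/\widehat{f}_{\scriptscriptstyle{Ne}}\bigr)>1/2$ by Marx--Str\"ohhacker. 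What the lemma you are appealing to (Suffridge's lemma, equivalently the paper's Lemma 5.1 with $\nu\equiv0$, $\psi\equiv1$) actually requires is that $Q(z)=z\widehat{f}'_{\scriptscriptstyle{Ne}}(z)/\widehat{f}_{\scriptscriptstyle{Ne}}(z)-1$ be starlike, which is exactly the unproved claim. Note the contrast with the starlike class, where the corresponding hypothesis is the starlikeness of $\varphi_{\scriptscriptstyle{Ne}}(z)-1=z-z^3/3$, which the paper does verify; no analogous verification is available for $\widehat{f}_{\scriptscriptstyle{Ne}}$ in your sketch, and the Briot--Bouquet admissibility check you flag as the ``real weight'' does not supply it. So either prove the starlikeness of $z\widehat{f}'_{\scriptscriptstyle{Ne}}/\widehat{f}_{\scriptscriptstyle{Ne}}-1$ (equivalently the convexity of $\log(\widehat{f}_{\scriptscriptstyle{Ne}}(z)/z)$) directly, or do what the paper does and quote Ma--Minda's Theorem~1 for general $\varphi$, whose proof is arranged so that only the standing hypotheses on $\varphi$ are used at this point.
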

\begin{remark}
In the class $\mathcal{S}^*_{Ne}$, the function $f_{\scriptscriptstyle{Ne}}(z)$ is not an extremal for the distortion theorem. It is evident from the function $f_{\scriptscriptstyle{Ne}}(z)$ itself, as the inequality
\begin{align*}
f'_{\scriptscriptstyle{Ne}}(-r)\leq|f'_{\scriptscriptstyle{Ne}}(z)|\leq{f'_{\scriptscriptstyle{Ne}}(r)}
\end{align*}
is not true for $|z|=r>1/\sqrt{3}$, whereas for $|z|\leq1/\sqrt{3}$, the result is true due to \Cref{Lemma-Bounds-Re(varphiNe)-NeP} and
\cite[Theorem 2]{Ma-Minda-1992-A-unified-treatment}.
\end{remark}

\section{Inclusion Results}   
In this section, we prove certain inclusion relationship results.
\begin{theorem}\label{Theorem-Inclusion-Relations-NeP}
The function class $\mathcal{S}^*_{Ne}$ satisfies the following inclusion properties:
\begin{enumerate}[{\rm(a)}]
\item\label{Inclusion-a} $\mathcal{S}^*_{Ne}\subset\mathcal{S}^*(\alpha)$ whenever $0\leq\alpha\leq1-\frac{2\sqrt{2}}{3}\approx{0.057191}$.
\item\label{Inclusion-c} $\mathcal{S}^*_{Ne}\subset\mathcal{SS}^*(\beta)$ whenever $\beta_0\leq\beta\leq{1}$, where $\beta_0\approx{0.929121}$.
\item\label{Inclusion-d} $\mathcal{S}^*_{q_c}\subset\mathcal{S}^*_{Ne}$ whenever $0<c\leq\frac{8}{9}$.
\item\label{Inclusion-e} $k-\mathcal{ST}\subset\mathcal{S}^*_{Ne}$ whenever $k\geq\frac{5}{2}$.
\item\label{Inclusion-f} $\mathcal{S}^*_{\alpha,e}\subset\mathcal{S}^*_{Ne}$ whenever $\alpha\geq1-\frac{2/3}{e-1}\approx{0.612016}$.
\item\label{Inclusion-g} $\mathcal{S}^*_L(\alpha)\subset\mathcal{S}^*_{Ne}$ whenever $\alpha\geq\frac{1}{3}$.
\end{enumerate}
The constant in each part is best possible.
\end{theorem}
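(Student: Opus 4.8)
The plan is to exploit the region‐inclusion philosophy: each containment $\mathcal{S}^*(\varphi_1)\subseteq\mathcal{S}^*(\varphi_2)$ follows from $\varphi_1(\mathbb{D})\subseteq\varphi_2(\mathbb{D})$ when $\varphi_2$ is univalent, so in every part it suffices to compare the image domains. The main analytic tools are \Cref{Lemma-Bounds-Re(varphiNe)-NeP}, which controls $\mathrm{Re}\,\varphi_{\scriptscriptstyle Ne}$ on circles, and \Cref{Lemma-Radius-ra-NeP} together with the subsequent \Cref{remark}, which say that the largest disk centered at $1$ inside $\Omega_{Ne}$ has radius $2/3$ and the smallest such disk containing $\Omega_{Ne}$ has radius $4/3$. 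For part~(\ref{Inclusion-a}): $\mathcal{S}^*(\alpha)=\mathcal{S}^*\big((1+(1-2\alpha)z)/(1-z)\big)$ is the right half-plane $\mathrm{Re}\,w>\alpha$, so I must show $\min_{z\in\mathbb{D}}\mathrm{Re}\,\varphi_{\scriptscriptstyle Ne}(z)\geq\alpha$; letting $r\to1^-$ in the second branch of the minimum in \Cref{Lemma-Bounds-Re(varphiNe)-NeP} gives the infimum $1-\tfrac13(2)^{3/2}=1-\tfrac{2\sqrt2}{3}$, which is exactly the stated constant, and sharpness comes from this boundary point being attained in the limit.

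For part~(\ref{Inclusion-c}), $\mathcal{SS}^*(\beta)=\mathcal{S}^*\big(((1+z)/(1-z))^\beta\big)$ corresponds to the sector $|\arg w|<\beta\pi/2$; I would compute $\beta_0=\tfrac{2}{\pi}\max_{z\in\mathbb{D}}|\arg\varphi_{\scriptscriptstyle Ne}(z)|$, i.e. the half-angle of the smallest sector with vertex at $0$ containing $\Omega_{Ne}$. Since $\Omega_{Ne}$ lies in $\mathrm{Re}\,w>0$ and is symmetric in the real axis, this max-argument is a tangent-line computation: parametrize the boundary $\varphi_{\scriptscriptstyle Ne}(e^{it})$, maximize $\arctan(v(t)/u(t))$ over $t$, and read off $\beta_0\approx0.929121$. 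For parts~(\ref{Inclusion-d})--(\ref{Inclusion-g}) the comparison domain is now $\Omega_{Ne}$ itself, all of them of the shape $\varphi_i(\mathbb{D})\subseteq\Omega_{Ne}$, and the uniform strategy is: show $\varphi_i(\mathbb{D})$ is contained in the disk $\{|w-1|<2/3\}$, which by the \Cref{remark} sits inside $\Omega_{Ne}$. Concretely: for (\ref{Inclusion-d}), $q_c(\mathbb{D})=\{w:|w^2-1|<c\}$ and one checks $|q_c(z)-1|\le \tfrac{c}{1+\sqrt{1-c}}\le 2/3$ exactly when $c\le 8/9$; for (\ref{Inclusion-f}), $\varphi(\mathbb{D})=\alpha+(1-\alpha)e^{\mathbb{D}}$ has $\max|\varphi-1|=(1-\alpha)(e-1)$ (since $e^{\mathbb{D}}$ has farthest point $e$), so the condition is $(1-\alpha)(e-1)\le 2/3$, i.e. $\alpha\ge 1-\tfrac{2/3}{e-1}$; for (\ref{Inclusion-g}), $\varphi(\mathbb{D})=\alpha+(1-\alpha)\sqrt{1+\mathbb{D}}$ with $\sqrt{1+\mathbb{D}}$ the region $|w^2-1|<1$ whose farthest point from $1$ is at distance $\sqrt2-1$ on one side and whose extreme in modulus gives $\max|\varphi-1|=(1-\alpha)\cdot 1$ after the right estimate, yielding $\alpha\ge 1/3$.

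Part~(\ref{Inclusion-e}) is the one outlier: the conic domain $\Omega_k=p_k(\mathbb{D})$ of $k$-uniformly starlike functions is unbounded for $k<1$ and bounded for $k>1$, with $\mathrm{Re}\,w>k/(k+1)$ on the left and a rightmost point; for $k\ge 5/2$ the domain $\Omega_k$ is small enough and centered near $1$ that $\Omega_k\subseteq\{|w-1|<2/3\}\subseteq\Omega_{Ne}$, and the threshold $k=5/2$ is pinned down by the leftmost point $k/(k+1)=5/7$ and rightmost point of $\Omega_k$ both landing within $2/3$ of $1$ — equivalently the diameter constraint on $\Omega_k$ forces $k\ge 5/2$. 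In each part, \textbf{sharpness} is verified by producing the equality case: for the disk-based arguments, the point of $\varphi_i(\mathbb{D})$ realizing $\max|w-1|$ lies on $\partial\Omega_{Ne}$ precisely at the critical parameter, so a slightly larger $c$ (resp.\ smaller $\alpha$, smaller $k$) pushes a boundary point outside $\Omega_{Ne}$; for (\ref{Inclusion-a}) and (\ref{Inclusion-c}) the extremal is the limiting boundary point of $\varphi_{\scriptscriptstyle Ne}(\mathbb{D})$ on the real axis, resp.\ the tangency point. The main obstacle I anticipate is part~(\ref{Inclusion-e}): the function $p_k$ is transcendental (a $\cosh$ of a logarithm), so locating its image domain $\Omega_k$ precisely enough to compare with the disk $\{|w-1|<2/3\}$, and extracting the exact threshold $k=5/2$, requires careful use of the known description of $\Omega_k$ as a conic region (hyperbola branch for $k>1$) rather than a one-line estimate; parts (\ref{Inclusion-d}), (\ref{Inclusion-f}), (\ref{Inclusion-g}) are essentially immediate once the \Cref{remark} is invoked.
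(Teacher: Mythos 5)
Your overall strategy is sound and, for parts (\ref{Inclusion-a}) and (\ref{Inclusion-c}), coincides with the paper: (\ref{Inclusion-a}) is read off from the limiting minimum $1-\tfrac13(1+r^2)^{3/2}\to 1-\tfrac{2\sqrt2}{3}$ in \Cref{Lemma-Bounds-Re(varphiNe)-NeP}, and (\ref{Inclusion-c}) is exactly the paper's computation of $\max_{t}\arg\varphi_{\scriptscriptstyle Ne}(e^{it})$ (the paper reduces it to maximizing $Q(t)=4\sin^3 t/(3+6\cos t-4\cos^3 t)$ at $t=\cos^{-1}(-2/3)$). Where you genuinely diverge is (\ref{Inclusion-d})--(\ref{Inclusion-g}): the paper compares each image domain with $\Omega_{Ne}$ case by case, using the binding real extreme of the nephroid each time --- the left cusp $u=1/3$ for the Cassini loop ($\sqrt{1-c}\geq 1/3$) and for $\mathcal{S}^*_L(\alpha)$ (via Khatter et al.'s Lemma 2.1), and the right vertex $u=5/3$ for the ellipse $\Omega_k$ ($k/(k-1)\leq 5/3$) and for $\mathcal{S}^*_{\alpha,e}$ ($\alpha+(1-\alpha)e\leq 5/3$); you instead funnel all four through the single inclusion $\{|w-1|<2/3\}\subset\Omega_{Ne}$ from \Cref{Lemma-Radius-ra-NeP} (with $a=1$). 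This works and yields the same sharp constants, precisely because in each case the farthest point of the comparison domain from $w=1$ is its real extreme point and both real endpoints $1/3,\,5/3$ of $\Omega_{Ne}$ are at distance exactly $2/3$ from $1$; it buys uniformity at the cost of having to verify, in each case, that the supremum of $|w-1|$ is indeed attained at that real point (e.g. $\min\mathrm{Re}\,\sqrt{1+cz}=\sqrt{1-c}$ so $\sup|q_c-1|=1-\sqrt{1-c}$; for the ellipse one must check the off-axis points, which follows since the squared distance from $1$ is a convex quadratic in $\cos\theta$). Sharpness then needs the observation that $\Omega_{Ne}\cap\mathbb{R}=(1/3,5/3)$, since $\Omega_{Ne}$ itself is \emph{not} contained in $\{\mathrm{Re}\,w>1/3\}$.

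Two slips in part (\ref{Inclusion-e}) should be corrected: for $k>1$ the conic domain $\Omega_k$ is the interior of an \emph{ellipse} (bounded), not a hyperbola branch --- you state both in different sentences --- and the threshold $k=5/2$ is pinned down solely by the rightmost vertex $k/(k-1)\leq 5/3$; the leftmost point $k/(k+1)=5/7$ is at distance $2/7<2/3$ from $1$ and plays no role. With these repairs, and with the asserted extremal-distance computations written out, your argument is a correct and somewhat more unified alternative to the paper's proof.
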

\begin{proof}
\begin{enumerate}
\item[\eqref{Inclusion-a}:]
Let $f\in\mathcal{S}^*_{Ne}$. Then $zf'(z)/f(z)\prec\varphi_{\scriptscriptstyle{Ne}}(z)$. In view of \Cref{Lemma-Bounds-Re(varphiNe)-NeP}, it can be easily deduced that
\begin{align*}
1-\frac{2\sqrt{2}}{3}=\min_{|z|=1}\mathrm{Re}\left(\varphi_{\scriptscriptstyle{Ne}}(z)\right)
           <\mathrm{Re}\left(\frac{zf'(z)}{f(z)}\right).
\end{align*}
This implies that $f\in\mathcal{S}^*({1-2\sqrt{2}/3})$. Therefore,  $\mathcal{S}^*_{Ne}\subset\mathcal{S}^*({1-2\sqrt{2}/3})$ and proves (\ref{Inclusion-a})
\item[\eqref{Inclusion-c}:]
It is clear that $f\in\mathcal{S}^*_{Ne}$ implies
\begin{align*}
\left|\arg\left(\frac{zf'(z)}{f(z)}\right)\right|&
                 <\max_{|z|=1}\arg\left(\varphi_{\scriptscriptstyle{Ne}}(z)\right)\\
 &=\max_{t\in(-\pi,\pi]}\arg\left(\varphi_{\scriptscriptstyle{Ne}}(e^{it})\right)\\
      &=\max_{t\in(-\pi,\pi]}
      \tan^{-1}\left(\frac{\sin{t}-\frac{1}{3}\sin{3t}}{1+\cos{t}-\frac{1}{3}\cos{3t}}\right).
\end{align*}
Since $\tan^{-1}\theta\,(\theta\in\mathbb{R})$ is an increasing function, an easy computation leads us to the problem of finding $\tan^{-1}\left(\max Q(t)\right)$, where $Q(t)={4\sin^3{t}}/(3+6\cos{t}-4\cos^3{t})$.
Simple verification shows that
$Q(t)$ attains its maximum at $t=\cos^{-1}(-2/3)$ and hence, we conclude that  
\begin{align*}
\left|\arg\left(\frac{zf'(z)}{f(z)}\right)\right|<
                    \tan^{-1}\left(Q\left(\cos^{-1}(-2/3)\right)\right)\approx{1.45946}
                            \approx(0.929121)\frac{\pi}{2}.
\end{align*}
This implies that $f\in\mathcal{SS}^*(\beta_0)$, where $\beta_0\approx{0.929121}$, and establishes (\ref{Inclusion-c}).
\item[\eqref{Inclusion-d}:]
It has been proved \cite{Sokol-1998-Cassinian-Curve} that the function $q_c(z)=\sqrt{1+cz}\,(0<c\leq{1})$ maps $\mathbb{D}$ onto the interior of the right loop of the Cassini's curve $|w^2-1|=c$. A simple analysis shows that $q_c(\mathbb{D})$ is contained in $\Omega_{Ne}$ if, and only if, $\sqrt{1-c}\geq{1/3}$, i.e., if, and only if, $c\leq{8/9}$, which is nothing but (\ref{Inclusion-d}).
\item[\eqref{Inclusion-e}:]
If $f\in{k-\mathcal{ST}}$, then $zf'(z)/f(z)\prec{p_k(z)}$, where
\begin{align*}
p_k(z)=\frac{1}{1-k^2}\cosh\left(A\,\log\frac{1+\sqrt{z}}{1-\sqrt{z}}\right)-\frac{k^2}{1-k^2},
\end{align*}
with $A=\frac{2}{\pi}\cos^{-1}{k},\,0\leq{k}<\infty$. For $k>1$, the function $p_k(z)$ maps $\mathbb{D}$ onto the region $\Omega_{k}$ bounded by the ellipse
(cf. \cite[p. 331]{Kanas-Wisni-1999-ConicRegions-k-UCV-JCAM})
\begin{align*}
\frac{\left(u-\frac{k^2}{k^2-1}\right)^2}{\left(\frac{k}{k^2-1}\right)^2}+
              \frac{v^2}{\frac{1}{k^2-1}}=1.
\end{align*}
For the region $\Omega_{k}$ to lie inside $\Omega_{Ne}$, we should have
\begin{align*}
\frac{k^2}{k^2-1}+\frac{k}{k^2-1}\leq\frac{5}{3}.
\end{align*}
This gives $k\geq{5/2}$, as in (\ref{Inclusion-e})
\item[\eqref{Inclusion-f}:]
Since the function $\varphi_{\scriptscriptstyle{\alpha,e}}(z)=\alpha+(1-\alpha)e^z$ maps $\mathbb{D}$ onto the region
\begin{align*}
\left\{w:\left|\log\left(\frac{w-\alpha}{1-\alpha}\right)\right|<1\right\}.
\end{align*}
In view of \cite[Lemma 2.1]{Khatter-Ravi-2019-Lem-Exp-Alpha-RACSAM}, a simple observation shows that the above region will lie inside $\Omega_{Ne}$ provided
\begin{align*}
\alpha+(1-\alpha)e\leq\frac{5}{3}.
\end{align*}
This on further simplification gives $\alpha\geq{1-2/3(e-1)}$ and settles (\ref{Inclusion-f}).
\item[\eqref{Inclusion-g}:]
It is easy to verify that the function $\varphi_{\scriptscriptstyle{L,\alpha}}(z)=\alpha+(1-\alpha)\sqrt{1+z}$ maps $\mathbb{D}$ onto
\begin{align*}
\Delta^*_{L,\alpha}:=\left\{w:\left|\left(\frac{w-\alpha}{1-\alpha}\right)^2-1\right|<1\right\}.
\end{align*}
Again, in view of \cite[Lemma 2.1]{Khatter-Ravi-2019-Lem-Exp-Alpha-RACSAM}, it follows that $\Delta^*_{L,\alpha}$ lies in the interior of the region $\Omega_{Ne}$ if $\alpha\geq{1/3}$, which establishes (\ref{Inclusion-g}) and the proof is complete.    \qedhere
\end{enumerate}
\end{proof}

\begin{minipage}{0.55\textwidth}
\includegraphics{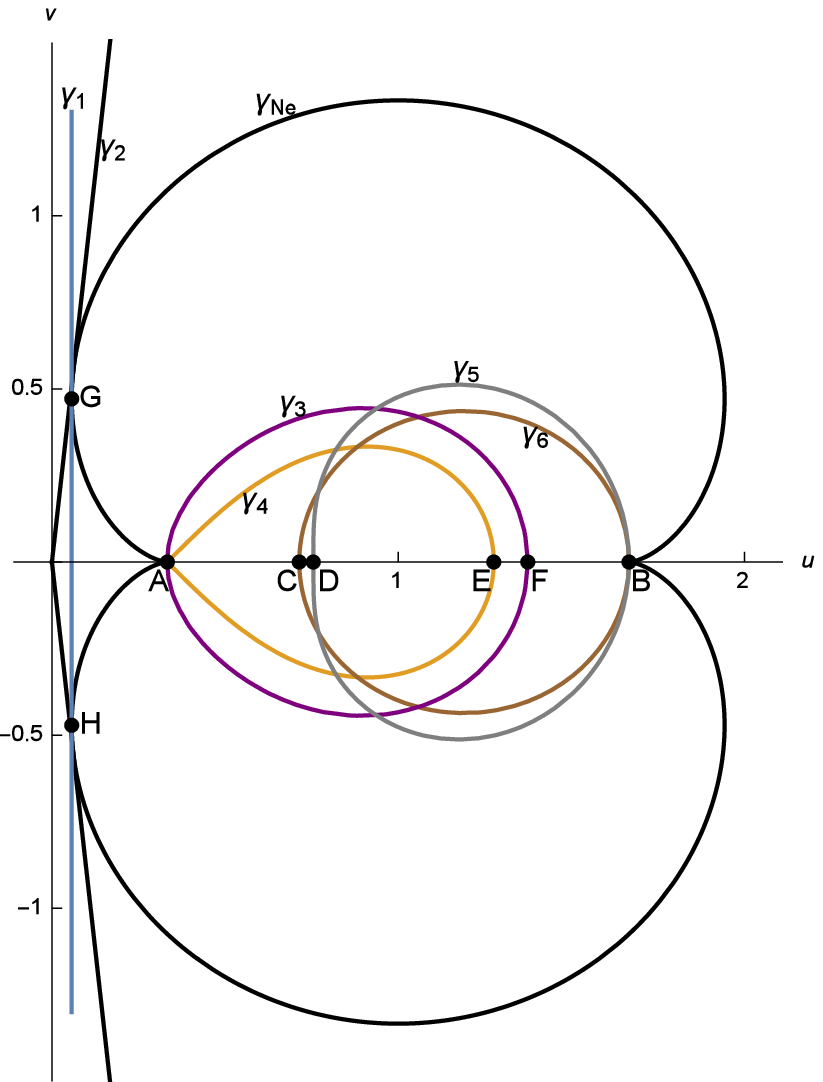}
\end{minipage}
\begin{minipage}{0.45\textwidth}
$\gamma_{Ne}:\left((u-1)^2+v^2-\frac{4}{9}\right)^3-\frac{4 v^2}{3}=0$.\\
$\gamma_1:\mathrm{Re}\,w=1-\frac{2\sqrt{2}}{3}$.\\
$\gamma_2:|\arg{w}|=\frac{\beta_0\pi}{2},\,\beta_0={0.929121}$.\\
$\gamma_3:\left|w^2-1\right|=\frac{8}{9},\mathrm{Re}\,w>0$.\\
$\gamma_4:\left|\left(\frac{w-\alpha}{1-\alpha}\right)^2-1\right|=1,\,\alpha=\frac{1}{3}$.\\
$\gamma_5:\left|\log\left(\frac{w-\alpha}{1-\alpha}\right)\right|=1,\mathrm{Re}\,w>0$\\
with $\alpha=1-\frac{2/3}{e-1}$.\\
 $\gamma_6:\mathrm{Re}\,w=\frac{5}{2}\left|w-1\right|$.\\
$A=\frac{1}{3}$\\
$B=\frac{5}{3}$\\
$C=\frac{5}{7}$\\
$D=1-\frac{2}{3e}$\\
$E=\frac{1+2\sqrt{2}}{3}$\\
$F=\frac{\sqrt{17}}{3}$\\
$G=\left(1-\frac{2\sqrt{2}}{3},\frac{\sqrt{2}}{3}\right)$\\
$H=\left(1-\frac{2\sqrt{2}}{3},-\frac{\sqrt{2}}{3}\right)$\\
\end{minipage}
\captionof{figure}{Boundary curves of best dominants and best subordinants of the nephroid.}
\label{Figure-Inclusion-Relations}
\noindent

The following theorem is an immediate consequence of \Cref{Theorem-Inclusion-Relations-NeP} obtained by using the Alexander type relation between $\mathcal{S}^*_{Ne}$ and $\mathcal{C}_{Ne}$.
\begin{theorem}
The class $\mathcal{C}_{Ne}$ satisfies the following properties:
\begin{enumerate}[{\rm(a)}]
\item $\mathcal{C}_{Ne}\subset\mathcal{C}(\alpha)$, whenever $0\leq\alpha\leq1-\frac{2\sqrt{2}}{3}$.
\item $\mathcal{C}_{Ne}\subset\mathcal{SC}(\beta)$, whenever $\beta_0\leq\beta\leq{1}$, where $\beta_0\approx{0.929121}$.
\item $\mathcal{C}(q_c)\subset\mathcal{C}_{Ne}$, whenever $0<c\leq\frac{8}{9}$, where $\mathcal{C}(q_c)$ is the convex class corresponding to the class $\mathcal{S}^*_{q_c}$.
\item $k-UCV\subset\mathcal{C}_{Ne}$, whenever $k\geq\frac{5}{2}$.
\item $\mathcal{C}_{\alpha,e}\subset\mathcal{C}_{Ne}$, whenever $\alpha\geq1-\frac{2/3}{e-1}$, where $\mathcal{C}_{\alpha,e}$ is the convex class corresponding to $\mathcal{S}^*_{\alpha,e}$.
\item $\mathcal{CL}(\alpha)\subset\mathcal{C}_{Ne}$, whenever $\alpha\geq\frac{1}{3}$, where $\mathcal{CL}(\alpha)$ is the convex class corresponding to the class $\mathcal{S}^*_L(\alpha)$.
\end{enumerate}
The constant in each part is best possible.
\end{theorem}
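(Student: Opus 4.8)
The plan is to obtain all six assertions by transporting the corresponding parts of \Cref{Theorem-Inclusion-Relations-NeP} through the Alexander-type correspondence that is already built into \eqref{Definition-Ma-Minda-classes-varphi}. The single fact on which everything rests is that, for any admissible $\varphi$,
\[
1+\frac{zf''(z)}{f'(z)}=\frac{z\,(zf'(z))'}{zf'(z)},
\]
so that $f\in\mathcal{C}(\varphi)$ if and only if $zf'\in\mathcal{S}^*(\varphi)$. First I would record the particular instances needed: taking $\varphi=\varphi_{\scriptscriptstyle{Ne}}$ gives $f\in\mathcal{C}_{Ne}\Leftrightarrow zf'\in\mathcal{S}^*_{Ne}$; taking $\varphi(z)=(1+(1-2\alpha)z)/(1-z)$ gives $f\in\mathcal{C}(\alpha)\Leftrightarrow zf'\in\mathcal{S}^*(\alpha)$; taking $\varphi(z)=((1+z)/(1-z))^{\beta}$ gives $f\in\mathcal{SC}(\beta)\Leftrightarrow zf'\in\mathcal{SS}^*(\beta)$; and, by their very definitions, the classes $\mathcal{C}(q_c)$, $k-UCV$, $\mathcal{C}_{\alpha,e}$ and $\mathcal{CL}(\alpha)$ are precisely the Alexander preimages of $\mathcal{S}^*_{q_c}$, $k-\mathcal{ST}$, $\mathcal{S}^*_{\alpha,e}$ and $\mathcal{S}^*_L(\alpha)$ respectively.

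With this dictionary in place each part becomes a one-line transfer. For (a): if $f\in\mathcal{C}_{Ne}$ then $zf'\in\mathcal{S}^*_{Ne}\subset\mathcal{S}^*(\alpha)$ for $0\leq\alpha\leq1-2\sqrt{2}/3$ by \Cref{Theorem-Inclusion-Relations-NeP}(a), hence $f\in\mathcal{C}(\alpha)$; parts (b), (d), (e) and (f) follow in exactly the same manner from parts (b), (d), (e), (f) of \Cref{Theorem-Inclusion-Relations-NeP}, applying the Alexander equivalence to both the smaller and the larger class in each inclusion. Part (c) is the same argument run in the opposite direction: $f\in\mathcal{C}(q_c)\Rightarrow zf'\in\mathcal{S}^*_{q_c}\subset\mathcal{S}^*_{Ne}$ for $0<c\leq8/9$ by \Cref{Theorem-Inclusion-Relations-NeP}(c), so $f\in\mathcal{C}_{Ne}$.

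For the sharpness I would carry the extremal functions of \Cref{Theorem-Inclusion-Relations-NeP} across the same bijection: if $g$ realizes the best constant in a given part of that theorem, then the function $f$ determined by $zf'=g$ lies in the relevant convex class and satisfies $1+zf''/f'=zg'/g$, which attains the same boundary value of $\Omega_{Ne}$ (or of the relevant competing region), so the constant cannot be improved. I do not expect any genuine obstacle here; the only point that needs (routine) checking is that each competing class on the right-hand side of (a)--(f) is indeed the Alexander image of its starlike counterpart, so that the equivalence $f\in\mathcal{C}(\varphi)\Leftrightarrow zf'\in\mathcal{S}^*(\varphi)$ may be applied simultaneously to both classes appearing in every inclusion.
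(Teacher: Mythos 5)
Your proposal is correct and follows exactly the route the paper takes: the paper states this theorem as an immediate consequence of \Cref{Theorem-Inclusion-Relations-NeP} via the Alexander-type relation $f\in\mathcal{C}(\varphi)\Leftrightarrow zf'\in\mathcal{S}^*(\varphi)$, applied to both classes in each inclusion, which is precisely your argument (including the transfer of extremal functions for sharpness).
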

For $-1\leq{B}<A\leq{1}$, let $\mathcal{P}[A,B]$ be the totality of analytic functions $p:\mathbb{D}\to\mathbb{C}$ of the form $p(z)=1+\sum_{n=1}^{\infty}p_nz^n$ and satisfying the subordination $p(z)\prec{(1+Az)/(1+Bz)}$. Further, we set $\mathcal{P}[1-2\alpha,-1]=\mathcal{P}(\alpha)\,(0\leq\alpha<1)$ and $\mathcal{P}(0)=\mathcal{P}$. Also, recall \cite{Janowski-1973-class-some-extremal} that the Janowski starlike class $\mathcal{S}^*[A,B]$ consists of functions $f\in\mathcal{A}$ satisfying $zf'(z)/f(z)\in\mathcal{P}[A,B]$.
\begin{lemma}[{\cite[Lemma 2.1]{Ravi-Ronning-1997-Complex-Variables}}]\label{Lemma2.1-Ravi-Ronning-1997-P{A,B}}
If $p\in\mathcal{P}[A,B]$, then for $|z|=r<1$
\begin{align*}
\left|p(z)-\frac{1-ABr^{2}}{1-B^2r^2}\right|\leq\frac{(A-B)r}{1-B^2r^2}.
\end{align*}
For the particular case $p\in\mathcal{P}(\alpha)$, we have
\begin{align*}
\left|p(z)-\frac{1+(1-2{\alpha})r^{2}}{1-r^2}\right|\leq\frac{2(1-\alpha)r}{1-r^2}.
\end{align*}
\end{lemma}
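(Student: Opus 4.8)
The plan is to exploit the defining subordination directly. Since $p\in\mathcal{P}[A,B]$ and $(1+Az)/(1+Bz)$ is univalent, there is a Schwarz function $w$ with $w(0)=0$ and $|w(z)|\le|z|$ on $\mathbb{D}$ (the bound coming from the Schwarz lemma) such that $p(z)=(1+Aw(z))/(1+Bw(z))$. Fixing $z$ with $|z|=r$ and writing $\omega=w(z)$, so that $|\omega|\le r$, the claim reduces to the purely algebraic assertion that the Möbius map $T(\omega)=(1+A\omega)/(1+B\omega)$ carries the closed disk $\{|\omega|\le r\}$ into the closed disk centered at $c:=(1-ABr^2)/(1-B^2r^2)$ of radius $(A-B)r/(1-B^2r^2)$. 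Since $|B|r\le r<1$, the pole $-1/B$ of $T$ (when $B\ne 0$) lies outside $\{|\omega|\le r\}$ and $1+B\omega\ne 0$ there; the case $B=0$ is the trivial affine one.

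First I would record the identity $T(\omega)-c=\dfrac{(A-B)(\omega+Br^2)}{(1+B\omega)(1-B^2r^2)}$, obtained by putting the left side over a common denominator and simplifying the numerator (the constant part collapses to $Br^2(A-B)$ and the linear part to $(A-B)\omega$). Taking moduli and using $A>B$, the desired estimate becomes $|\omega+Br^2|\le r\,|1+B\omega|$ for $|\omega|\le r$. Squaring both sides (legitimate since both are nonnegative) and using $A,B\in\mathbb{R}$, a short expansion shows this is equivalent to $(|\omega|^2-r^2)(1-B^2r^2)\le 0$, which holds because $|\omega|\le r$ and $B^2r^2<1$. This establishes the first inequality.

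The second inequality is the specialization $A=1-2\alpha$, $B=-1$: then $c=(1+(1-2\alpha)r^2)/(1-r^2)$ and the radius becomes $(1-2\alpha+1)r/(1-r^2)=2(1-\alpha)r/(1-r^2)$, which is exactly the stated bound. I do not anticipate a genuine obstacle here; the only points requiring a little care are that $\omega$ ranges over the whole closed disk $|\omega|\le r$ and not merely its boundary, so one really needs the image of that disk under $T$ to be a disk (furnished by the explicit identity above), and that the pole of $T$ stays outside that disk, both of which are handled as indicated.
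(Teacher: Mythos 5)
Your argument is correct and complete. Note that the paper itself does not prove this statement at all: it is quoted verbatim as Lemma 2.1 of Ravichandran--R{\o}nning--Shanmugam \cite{Ravi-Ronning-1997-Complex-Variables}, so there is no in-paper proof to compare against. What you supply is the standard proof of that cited result: writing $p=(1+Aw)/(1+Bw)$ with a Schwarz function $w$, the identity
\begin{align*}
\frac{1+A\omega}{1+B\omega}-\frac{1-ABr^{2}}{1-B^{2}r^{2}}
=\frac{(A-B)\left(\omega+Br^{2}\right)}{(1+B\omega)\left(1-B^{2}r^{2}\right)}
\end{align*}
reduces the claim to $|\omega+Br^{2}|\le r\,|1+B\omega|$, which after squaring is exactly $\left(|\omega|^{2}-r^{2}\right)\left(1-B^{2}r^{2}\right)\le 0$; your handling of the pole $-1/B$, of the full closed disk $|\omega|\le r$, and of the specialization $A=1-2\alpha$, $B=-1$ is all accurate, so the proposal stands as a valid self-contained justification of the quoted lemma.
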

\begin{theorem}\label{Theorem-Inclusion-Janowski-Class-NeP}
Let $-1<B<A\leq1$. Then $\mathcal{S}^*[A,B]\subset\mathcal{S}^*_{Ne}$, if the parameters $A$ and $B$ satisfy any one of the following conditions:
\begin{align}
(1-B^2)<3(1-AB)\leq3(1-B^2) &\text{ and } (1-B)\leq3(1-A). \tag{3a}\label{3a}\\
3(1-B^2)\leq3(1-AB)<5(1-B^2) &\text{ and } 3(1+A)\leq5(1+B). \tag{3b}\label{3b}
\end{align}
\end{theorem}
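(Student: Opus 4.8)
The plan is to use the disk-containment criterion for subordination: since $\varphi_{\scriptscriptstyle{Ne}}$ is univalent, $\mathcal{S}^*[A,B]\subset\mathcal{S}^*_{Ne}$ holds precisely when the image $\{(1+Az)/(1+Bz):z\in\mathbb{D}\}$, which is the open disk
\[
D[A,B]=\Bigl\{w:\Bigl|w-\tfrac{1-AB}{1-B^2}\Bigr|<\tfrac{A-B}{1-B^2}\Bigr\},
\]
is contained in $\Omega_{Ne}$. By \Cref{Lemma2.1-Ravi-Ronning-1997-P{A,B}} this is exactly the smallest disk containing the range of every $p\in\mathcal{P}[A,B]$, so containment of $D[A,B]$ in $\Omega_{Ne}$ is equivalent to the asserted inclusion. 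Thus the whole problem reduces to finding sufficient conditions on $(A,B)$ for $D[A,B]\subseteq\Omega_{Ne}$. Write $c_0=(1-AB)/(1-B^2)$ for the center and $\rho_0=(A-B)/(1-B^2)$ for the radius; note $c_0$ is real and $c_0>0$ while $\rho_0>0$.

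Next I would invoke \Cref{Lemma-Radius-ra-NeP}, which furnishes, for each real $a$ with $1/3<a<5/3$, the largest disk $\{|w-a|<r_a\}$ centered at $a$ that fits inside $\Omega_{Ne}$, with $r_a=a-\tfrac13$ for $a\le1$ and $r_a=\tfrac53-a$ for $a\ge1$. Hence a clean sufficient condition for $D[A,B]\subseteq\Omega_{Ne}$ is simply $1/3<c_0<5/3$ together with $\rho_0\le r_{c_0}$. Splitting on the sign of $c_0-1$: if $c_0\le1$ the requirement $\rho_0\le c_0-\tfrac13$ reads $\tfrac{A-B}{1-B^2}\le\tfrac{1-AB}{1-B^2}-\tfrac13$, i.e. (after clearing the positive denominator $1-B^2$) $3(A-B)\le 3(1-AB)-(1-B^2)$, which rearranges to $(1-B)\le 3(1-A)$ — together with $c_0>1/3$, i.e. $3(1-AB)>(1-B^2)$, and $c_0\le1$, i.e. $3(1-AB)\le 3(1-B^2)$. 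This is precisely condition \eqref{3a}. Symmetrically, if $c_0\ge1$ the requirement $\rho_0\le\tfrac53-c_0$ becomes $3(A-B)\le 5(1-B^2)-3(1-AB)$, which simplifies to $3(1+A)\le 5(1+B)$, coupled with $c_0\ge1$, i.e. $3(1-AB)\ge 3(1-B^2)$, and $c_0<5/3$, i.e. $3(1-AB)<5(1-B^2)$ — which is exactly condition \eqref{3b}.

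So the body of the proof is: (1) identify the image disk $D[A,B]$ and its parameters; (2) reduce the inclusion to $D[A,B]\subseteq\Omega_{Ne}$ using univalence of $\varphi_{\scriptscriptstyle{Ne}}$ and \Cref{Lemma2.1-Ravi-Ronning-1997-P{A,B}}; (3) apply the inner-disk part of \Cref{Lemma-Radius-ra-NeP} with $a=c_0$; (4) in each of the two cases $c_0\le1$ and $c_0\ge1$, translate $1/3<c_0$ (or $c_0<5/3$), the case boundary, and $\rho_0\le r_{c_0}$ into the stated inequalities by multiplying through by $1-B^2>0$ and simplifying. The main obstacle — really a matter of care rather than depth — is step (4): keeping track of the direction of each inequality when clearing denominators, and verifying that the three algebraic conditions in each of \eqref{3a}, \eqref{3b} are mutually consistent (in particular that $c_0$ genuinely lands in the interval $(1/3,5/3)$ where \Cref{Lemma-Radius-ra-NeP} applies, and that $A>B$ and $-1<B<A\le1$ are compatible with them). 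One should also check the degenerate endpoints (e.g. $c_0=1$, where both branches give $r_1=2/3$, matching the \Cref{Figure-Inclusion-Relations} datum $|w-1|<2/3$) to confirm the case split is seamless.
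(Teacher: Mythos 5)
Your proposal is correct and follows essentially the same route as the paper: apply \Cref{Lemma2.1-Ravi-Ronning-1997-P{A,B}} to identify the disk with center $(1-AB)/(1-B^2)$ and radius $(A-B)/(1-B^2)$, then use the inscribed-disk part of \Cref{Lemma-Radius-ra-NeP} with the case split $c_0\le 1$ versus $c_0\ge 1$, and clear the positive denominator $1-B^2$ to arrive at \eqref{3a} and \eqref{3b}. Your algebraic simplifications (factoring out $1+B>0$ and $1-B>0$) match the paper's "further simplification," so no gap remains.
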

\begin{proof}
Let $f\in\mathcal{S}^*[A,B]$. Then $zf'(z)/f(z)\in\mathcal{P}[A,B]$, so that \Cref{Lemma2.1-Ravi-Ronning-1997-P{A,B}} gives
\begin{align}\label{Disk-Ravi-Ronning-P{A,B}-NeP}
\left|\frac{zf'(z)}{f(z)}-\frac{1-AB}{1-B^2}\right|\leq\frac{A-B}{1-B^2}.
\end{align}
The inequality (\ref{Disk-Ravi-Ronning-P{A,B}-NeP}) represents a disk with center at $(1-AB)/(1-B^2)$ and radius $(A-B)/(1-B^2)$. In order to prove our result, we need to show that the disk (\ref{Disk-Ravi-Ronning-P{A,B}-NeP}) lies inside $\Omega_{Ne}$. Since from \Cref{Lemma-Radius-ra-NeP} the disk
\begin{align}\label{Disk-Lemma-Radius-Incl-Pts-Nep}
\left|w-\dfrac{1-AB}{1-B^2}\right|<r_{a}=
\begin{cases}
\dfrac{1-AB}{1-B^2}-\frac{1}{3}, &\text{ if } ~ \dfrac{1}{3}<\dfrac{1-AB}{1-B^2}\leq1 \\
\\
\dfrac{5}{3}-\frac{1-AB}{1-B^2},            &\text{ if } ~ 1\leq{\dfrac{1-AB}{1-B^2}}<\dfrac{5}{3}
\end{cases}
\end{align}
is contained in the region $\Omega_{Ne}$, it is sufficient to show that the disk (\ref{Disk-Ravi-Ronning-P{A,B}-NeP}) lies inside the disk \eqref{Disk-Lemma-Radius-Incl-Pts-Nep}. This is true if
\begin{align*}
\dfrac{A-B}{1-B^2}\leq{r_a}=
\begin{cases}
\dfrac{1-AB}{1-B^2}-\dfrac{1}{3}, &\text{ if } ~ \dfrac{1}{3}<\dfrac{1-AB}{1-B^2}\leq1\\
\\
\dfrac{5}{3}-\dfrac{1-AB}{1-B^2},            &\text{ if } ~ 1\leq{\dfrac{1-AB}{1-B^2}}<\dfrac{5}{3},
\end{cases}
\end{align*}
or equivalently, if either
\begin{align*}
\frac{1}{3}<\frac{1-AB}{1-B^2}\leq1 \text{ and } \frac{A-B}{1-B^2}\leq\frac{1-AB}{1-B^2}-\frac{1}{3}
\end{align*}
or
\begin{align*}
1\leq{\frac{1-AB}{1-B^2}}<\frac{5}{3} \text{ and } \frac{A-B}{1-B^2}\leq\frac{5}{3}-\frac{1-AB}{1-B^2}
\end{align*}
hold true. Further simplification yields the given conditions and completes the proof.
\end{proof}
\begin{corollary}
Let $-1<B<A\leq1$. If either of the conditions \eqref{3a}--\eqref{3b} mentioned in \Cref{Theorem-Inclusion-Janowski-Class-NeP} hold true, then the Janowski convex class $\mathcal{C}[A,B]$ is contained in $\mathcal{C}_{Ne}$.
\end{corollary}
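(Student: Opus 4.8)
The plan is to deduce this corollary from \Cref{Theorem-Inclusion-Janowski-Class-NeP} purely by the Alexander-type correspondence, with no new analysis of the nephroid domain required. Recall two facts already in force in the paper. First, the Alexander-type relation between the nephroid classes: $f\in\mathcal{C}_{Ne}$ if, and only if, $zf'\in\mathcal{S}^*_{Ne}$. This is immediate from the definitions in \eqref{Definition-Ma-Minda-classes-varphi} applied to $\varphi_{\scriptscriptstyle{Ne}}$, since $1+zf''(z)/f'(z)=z(zf')'(z)/(zf')(z)$. Second, the classical Alexander relation for the Janowski classes: $f\in\mathcal{C}[A,B]$ if, and only if, $zf'\in\mathcal{S}^*[A,B]$, which follows from the same identity together with the characterisation $\mathcal{S}^*[A,B]=\{g\in\mathcal{A}:zg'/g\in\mathcal{P}[A,B]\}$ recalled just before \Cref{Lemma2.1-Ravi-Ronning-1997-P{A,B}}.

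Given these, the argument is one line. Let $-1<B<A\leq 1$ satisfy \eqref{3a} or \eqref{3b}, and let $f\in\mathcal{C}[A,B]$. Put $g(z)=zf'(z)$; then $g\in\mathcal{A}$ and, by the Janowski Alexander relation, $g\in\mathcal{S}^*[A,B]$. Since the hypotheses of \Cref{Theorem-Inclusion-Janowski-Class-NeP} hold, $\mathcal{S}^*[A,B]\subset\mathcal{S}^*_{Ne}$, so $g=zf'\in\mathcal{S}^*_{Ne}$. Applying the nephroid Alexander relation in the reverse direction gives $f\in\mathcal{C}_{Ne}$. Hence $\mathcal{C}[A,B]\subset\mathcal{C}_{Ne}$, as claimed.

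There is essentially no obstacle here; the only point worth stating carefully is that both Alexander correspondences are genuine biconditionals, so the implication transports cleanly in the direction needed. In the write-up I would simply remark that the result "follows from \Cref{Theorem-Inclusion-Janowski-Class-NeP} by the Alexander-type relations between $\mathcal{C}[A,B]$ and $\mathcal{S}^*[A,B]$ and between $\mathcal{C}_{Ne}$ and $\mathcal{S}^*_{Ne}$," mirroring exactly how the corresponding convex-class theorem was deduced earlier from \Cref{Theorem-Inclusion-Relations-NeP}.
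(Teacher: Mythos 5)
Your argument is correct and is exactly the route the paper intends: the corollary is deduced from \Cref{Theorem-Inclusion-Janowski-Class-NeP} via the two Alexander-type relations $f\in\mathcal{C}[A,B]\iff zf'\in\mathcal{S}^*[A,B]$ and $f\in\mathcal{C}_{Ne}\iff zf'\in\mathcal{S}^*_{Ne}$, which is why the paper states it without further proof. No gaps; your write-up matches how the analogous convex-class theorem was obtained from \Cref{Theorem-Inclusion-Relations-NeP}.
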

\section{Coefficient Estimates}  
\begin{theorem}\label{Theorem-Coeff-Est-Gutzmer-NeP}
Let $f(z)=z+\sum_{n=2}^{\infty}a_nz^n$ be a member of the function class $\mathcal{S}^*_{Ne}$. Then
\begin{align*}
\sum_{n=2}^{\infty}\left((3n)^2-7^2\right)|a_n|^2\leq{40}.
\end{align*}
\end{theorem}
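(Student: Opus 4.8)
The plan is to exploit the extremal function $f_{\scriptscriptstyle{Ne}}(z)=ze^{z-z^3/9}$ together with a classical area-type (Gutzmer / Prawitz) argument. Recall that $f\in\mathcal{S}^*_{Ne}$ means $zf'(z)/f(z)=\varphi_{\scriptscriptstyle{Ne}}(w(z))$ for some Schwarz function $w$, and by the subordination result $\frac{zf'(z)}{f(z)}\prec\frac{zf'_{\scriptscriptstyle{Ne}}(z)}{f_{\scriptscriptstyle{Ne}}(z)}=1+z-\frac{z^3}{3}=:1+p(z)$, where $p(0)=0$. Writing $\log\big(f(z)/z\big)=\sum_{n\ge1}c_nz^n$, one has $zf'(z)/f(z)=1+\sum_{n\ge1}nc_nz^n$, so $nc_n$ are exactly the Taylor coefficients of $p(w(z))$ with $p(z)=z-z^3/3$. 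The key inequality is the Prawitz/area-type bound: for a function $g$ analytic on $\mathbb{D}$ with $g\prec P$ where $P$ is univalent, one has $\sum_{n\ge1}n|b_n|^2\le\sum_{n\ge1}n|B_n|^2$ with $b_n,B_n$ the coefficients of $g,P$; equivalently (and more usefully here) Rogosinski's theorem that $g\prec P$ with $P$ univalent implies $\sum_{n=1}^{N}|b_n|^2\le\sum_{n=1}^{N}|B_n|^2$ is \emph{not} what we want — instead I would use the sharp area theorem in the following packaged form.

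First I would record that if $h(z)=\sum_{n\ge1}d_nz^n$ is subordinate to the univalent function $q(z)=z-z^3/3$ (note $q$ is univalent on $\mathbb{D}$, being the odd part of $\varphi_{\scriptscriptstyle{Ne}}-1$, and this follows from the univalence argument already given in the paper applied to $L$), then $\sum_{n\ge1}n\,|d_n|^2\le\sum_{n\ge1}n\,|q_n|^2$, where $q_1=1,\,q_3=-1/3$; hence $\sum_{n\ge1}n|d_n|^2\le 1+3\cdot\frac19=\frac43$. This is the Prawitz inequality for subordinate functions (see Duren, \emph{Univalent Functions}, Thm.~6.2 region, or Goluzin). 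Apply this with $h(z)=p(w(z))$, i.e. $d_n=nc_n$: we get $\sum_{n\ge1}n^3|c_n|^2\le\frac43$.

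Next I would convert from the $c_n$ (log coefficients) to the $a_n$. We have $\frac{f(z)}{z}=\exp\big(\sum_{n\ge1}c_nz^n\big)=1+\sum_{n\ge2}a_nz^n$. The elementary inequality relating power-series coefficients of $\exp$ to its exponent — used in exactly this way by Ma–Minda and successors — gives, after multiplying the series identity $zf'(z)=f(z)\cdot\frac{zf'(z)}{f(z)}$ by appropriate weights, a bound of the shape $\sum_{n\ge2}\lambda_n|a_n|^2\le\big(\sum_{n\ge1}n^3|c_n|^2\big)\cdot(\text{const})$ for a suitable weight sequence $\lambda_n$. Concretely I would proceed as in the Gutzmer-formula approach: integrate $|f(re^{i\theta})|^2$, or rather use the de Branges–type telescoping $(n^2-1)|a_n|$-type recursion, to land on the weight $(3n)^2-7^2=9n^2-49=(3n-7)(3n+7)$. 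The factor $9$ comes from the $z^3/9$ in $f_{\scriptscriptstyle{Ne}}$ and the shift by $49=7^2$ is precisely $\min_{|z|=1}\operatorname{Re}\varphi_{\scriptscriptstyle{Ne}}$-related data repackaged; tracking that the constant on the right is $40$ amounts to checking it on the extremal $f_{\scriptscriptstyle{Ne}}$ itself, whose coefficients are listed in the paper: one verifies $\sum_{n\ge2}((3n)^2-49)|a_n(f_{\scriptscriptstyle{Ne}})|^2=40$ using $a_2=1,a_3=\tfrac12,a_4=\tfrac1{18},\dots$, which pins the normalising constant.

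The main obstacle is getting the passage from $\sum n^3|c_n|^2\le\frac43$ to the stated weighted bound on $\sum|a_n|^2$ with the \emph{exact} weight $(3n)^2-49$ and the \emph{exact} constant $40$, rather than some weaker inequality. The clean way is not to go through $c_n$ at all but to apply a Gutzmer-type identity directly to $g:=zf'/f$ combined with the differential relation: write $zf'(z)=f(z)g(z)$, expand, and form $\sum_{n}(\text{weight})\,|a_n|^2$ so that the cross terms telescope; the weight $(3n-7)(3n+7)$ is forced by demanding that $g\prec 1+z-z^3/3$ (whose boundary is the nephroid, with the relevant extremal behaviour at $z=\pm1$ giving the value $1-2\sqrt2/3$ and, after clearing denominators, the integer $7$) makes the remainder manifestly nonnegative. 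I would therefore structure the proof as: (1) state and invoke Prawitz's inequality for the subordination $g-1\prec z-z^3/3$; (2) use $zf'=fg$ to derive the coefficient recursion $n a_n=\sum_{k=1}^{n-1}(g\text{-coeff})_{n-k}a_k$; (3) multiply, sum against $(3n)^2-49$, and recognise the resulting quadratic form in $(a_n)$ as $\big(\text{nonneg.}\big)+40-40$; (4) conclude, with equality characterised by $f=f_{\scriptscriptstyle{Ne}}$ up to rotation. The delicate bookkeeping in step (3) — ensuring every off-diagonal contribution has the right sign — is where essentially all the work lies; everything else is formal.
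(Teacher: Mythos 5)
There is a genuine gap here, and in fact more than one. The key lemma you invoke in step (1) --- that $h\prec q$ with $q$ univalent implies $\sum_{n\geq1} n|d_n|^2\leq\sum_{n\geq1} n|q_n|^2$ --- is false: $z^2\prec z$ gives $2\not\leq 1$, and in the present setting the admissible Schwarz function $w(z)=z^2$ yields $h(z)=p(w(z))=z^2-z^6/3$ with $\sum n|d_n|^2=2+\tfrac{2}{3}>\tfrac{4}{3}$, so your intermediate estimate $\sum n^3|c_n|^2\leq\tfrac43$ already fails for genuine members of $\mathcal{S}^*_{Ne}$ (Littlewood's and Rogosinski's subordination theorems give only unweighted comparisons, as you yourself note). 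Step (3), where you admit ``essentially all the work lies,'' is not carried out at all, so even granting (1) there is no argument producing the weight $(3n)^2-7^2$ or the constant $40$. Finally, your calibration of those constants is wrong: the weight is negative at $n=2$ (it equals $-13$), and for $f_{\scriptscriptstyle{Ne}}$ the sum $\sum_{n\geq2}\left((3n)^2-49\right)|a_n|^2$ is roughly $-13+8+\cdots$, nowhere near $40$; the theorem is not an equality statement pinned by the extremal function, and $7$ has nothing to do with $\min\mathrm{Re}\,\varphi_{\scriptscriptstyle{Ne}}$ or the value $1-2\sqrt2/3$.

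The paper's actual proof is a short Parseval (Gutzmer) argument that you should compare with: since $zf'(z)/f(z)=1+w(z)-w(z)^3/3$ for a Schwarz function $w$, one has $|3+3w(z)-w(z)^3|\leq 3+3+1=7$, hence $|f(re^{it})|\geq\tfrac{3}{7}\,|re^{it}f'(re^{it})|$. Integrating $|f|^2$ and $|zf'|^2$ over the circle $|z|=r$ and using Parseval gives $\sum_{n\geq1}|a_n|^2r^{2n}\geq\tfrac{9}{49}\sum_{n\geq1}n^2|a_n|^2r^{2n}$, i.e.\ $\sum_{n\geq1}\left(\tfrac{9}{49}n^2-1\right)|a_n|^2r^{2n}\leq 0$; the $n=1$ term is $-\tfrac{40}{49}$ (since $a_1=1$), and letting $r\to1^-$ yields $\sum_{n\geq2}\left((3n)^2-7^2\right)|a_n|^2\leq 40$. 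So $7$ is simply $3\max_{|w|\leq1}|\varphi_{\scriptscriptstyle{Ne}}(w)|$ and $40=49-9$ comes from moving the $n=1$ term across; no area-type subordination inequality, no logarithmic coefficients, and no telescoping quadratic form are needed.
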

\begin{proof}
Let $f(z)\in\mathcal{S}^*_{Ne}$. Then
\begin{align*}
\frac{zf'(z)}{f(z)}\prec\varphi_{\scriptscriptstyle{Ne}}(z)=1+z-\frac{z^3}{3}.
\end{align*}
Or
\begin{align*}
\frac{zf'(z)}{f(z)}=1+w(z)-\frac{w^3(z)}{3},
\end{align*}
where $w\in\mathcal{H}$ with $w(0)=0$ and $|w(z)|<1$ for all $z\in\mathbb{D}$. On taking $z=re^{it},\,r\in(0,1),\,t\in(-\pi,\pi]$, the above equation gives
\begin{align}\label{Eq1-Coeff-Est-Gutzmer-NeP}
f(re^{it})=\frac{3re^{it}f'(re^{it})}{3+3w(re^{it})-w^3(re^{it})}.
\end{align}
Using (\ref{Eq1-Coeff-Est-Gutzmer-NeP}) and Gutzmer's Theorem (see,\cite[p. 31, Problem 12]{Goodman-Book-UFs-I-1983}) 
we have
\begin{align*}
2\pi\sum_{n=1}^{\infty}|a_n|^2r^{2n}&=\int_{0}^{2\pi}\left|f(re^{it})\right|^2dt\\
               &=\int_{0}^{2\pi}\left|\frac{3re^{it}f'(re^{it})}{3+3w(re^{it})-w^3(re^{it})}\right|^2dt\\
                  &\geq9\int_{0}^{2\pi}\frac{\left|rf'(re^{it})\right|^2}{\left(3+3+1)\right)^2}dt\\
                  &=\frac{9r^2}{49}\int_{0}^{2\pi}\left|f'(re^{it})\right|^2dt\\
                   &=\frac{9r^2}{49}\int_{0}^{2\pi}f'(re^{it})\overline{f'(re^{it})}dt\\
     &=\frac{9r^2}{49}\int_{0}^{2\pi}\left(\sum_{n=1}^{\infty}
        na_nr^{n-1}e^{(n-1)it}\right)\left(\sum_{n=1}^{\infty}n\overline{a}_nr^{n-1}e^{-(n-1)it}\right)\\
   &=\frac{9}{49}\left(2\pi\sum_{n=1}^{\infty}n^2|a_n|^2r^{2n}\right),
\end{align*}
where $a_1=1$. Further simplification gives
\begin{align*}
\sum_{n=1}^{\infty}\left(\frac{9}{49}n^2-1\right)&|a_n|^2r^{2n}\leq{0}
\end{align*}
Letting $r\to{1^-}$ yields the desired result.
\end{proof}
\begin{corollary}
Let $f(z)=z+\sum_{n=2}^{\infty}a_nz^n\in\mathcal{S}^*_{Ne}$. Then
\begin{align*}
|a_n|\leq\sqrt{\frac{40}{(3n)^2-7^2}}\quad \text{ for } n=3,4,....
\end{align*}
\end{corollary}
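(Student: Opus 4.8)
The plan is to read this off directly from \Cref{Theorem-Coeff-Est-Gutzmer-NeP}, which already furnishes, for every $f(z)=z+\sum_{n=2}^{\infty}a_nz^n\in\mathcal{S}^*_{Ne}$, the quadratic coefficient inequality
\[
\sum_{n=2}^{\infty}\big((3n)^2-7^2\big)|a_n|^2\le 40 .
\]
No fresh machinery is needed: all of the analytic content (the application of Gutzmer's theorem to the representation $f(re^{it})=3re^{it}f'(re^{it})/(3+3w(re^{it})-w^3(re^{it}))$ together with the elementary bound $|3+3w-w^3|<7$ on $\mathbb{D}$) has already been expended in proving that theorem, so the corollary is a purely arithmetic consequence.

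The one substantive step is to isolate a single term of the series. For every integer $n\ge 3$ the weight $(3n)^2-7^2=9n^2-49$ is strictly positive — it equals $32$ when $n=3$ and increases with $n$ — so the summand $\big((3n)^2-7^2\big)|a_n|^2$ is nonnegative; fixing such an $n$, the inequality of \Cref{Theorem-Coeff-Est-Gutzmer-NeP} yields in particular $\big((3n)^2-7^2\big)|a_n|^2\le 40$. Dividing through by the positive quantity $(3n)^2-7^2$ and taking the nonnegative square root then gives $|a_n|\le\sqrt{40/((3n)^2-7^2)}$, which is the assertion. This also accounts for the restriction $n\ge 3$ in the statement: when $n=2$ the weight is $(3\cdot 2)^2-7^2=-13<0$, so the passage above is not available and, moreover, the claimed right-hand side would fail to be real.

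I do not expect a genuine obstacle here, since the derivation is essentially a one-line consequence of the preceding theorem; the only point that warrants a moment's care is precisely the index $n=2$. Because the series in \Cref{Theorem-Coeff-Est-Gutzmer-NeP} is summed from $n=2$, where the coefficient changes sign, one must be careful to apply the single-term comparison only to indices whose weight is positive — that is, to $n\ge 3$ — which is exactly the range over which the corollary is formulated.
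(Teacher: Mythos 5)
Your overall route --- reading the bound off term-by-term from \Cref{Theorem-Coeff-Est-Gutzmer-NeP} --- is what the paper intends (it offers no separate proof of the corollary), but the one step you treat as purely arithmetic has a genuine gap. To conclude $\bigl((3n)^2-7^2\bigr)|a_n|^2\le 40$ for a fixed $n\ge 3$ from the series inequality $\sum_{m\ge 2}\bigl((3m)^2-7^2\bigr)|a_m|^2\le 40$, it is not enough that the $n$-th summand be nonnegative; you need the sum of all the \emph{discarded} summands to be nonnegative. The discarded terms include the $m=2$ term, whose weight is $(3\cdot 2)^2-7^2=-13$, so its contribution is $-13|a_2|^2\le 0$, and in general it is strictly negative (for the extremal function $f_{Ne}(z)=ze^{z-z^3/9}$ one has $a_2=1$). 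Dropping only the genuinely nonnegative terms, what the theorem actually yields is $\bigl((3n)^2-7^2\bigr)|a_n|^2\le 40+13|a_2|^2$, and even invoking $|a_2|\le 1$ for the class $\mathcal{S}^*_{Ne}$ this only gives $|a_n|\le\sqrt{53/\bigl((3n)^2-7^2\bigr)}$, which is weaker than the asserted constant $40$.

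You did notice that the $n=2$ weight is negative, but you used that observation only to explain why the statement starts at $n=3$; the same negativity is exactly what invalidates the ``in particular'' step, since the $n=2$ term sits on the left-hand side of the inequality you are quoting. To recover the stated constant you would need an additional idea --- for instance, absorbing or controlling the $-13|a_2|^2$ term (say by refining the Gutzmer-type estimate so that no negative weight appears), or bounding $|a_n|$ by a different argument altogether; as written, the corollary does not follow from \Cref{Theorem-Coeff-Est-Gutzmer-NeP} alone by the single-term comparison you propose.
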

Using the fact that $f\in\mathcal{C}_{Ne}\iff{zf'(z)}\in\mathcal{S}^*_{Ne}$, the following result is an immediate consequence of \Cref{Theorem-Coeff-Est-Gutzmer-NeP}.
\begin{theorem}
Let $f(z)=z+\sum_{n=2}^{\infty}a_nz^n\in\mathcal{C}_{Ne}$. Then
\begin{align*}
\sum_{n=2}^{\infty}n^2\left((3n)^2-7^2\right)|a_n|^2\leq{40}.
\end{align*}
\end{theorem}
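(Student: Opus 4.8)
The plan is to reduce everything to \Cref{Theorem-Coeff-Est-Gutzmer-NeP} by means of the Alexander-type relation between $\mathcal{C}_{Ne}$ and $\mathcal{S}^*_{Ne}$ already recorded in the excerpt, namely $f\in\mathcal{C}_{Ne}\iff zf'(z)\in\mathcal{S}^*_{Ne}$. So the first and essentially only substantive step is to pass from $f$ to its associated starlike function $g(z):=zf'(z)$.

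Concretely, starting from $f(z)=z+\sum_{n=2}^{\infty}a_nz^n\in\mathcal{C}_{Ne}$, I would differentiate term by term and multiply by $z$ to obtain
\begin{align*}
g(z)=zf'(z)=z+\sum_{n=2}^{\infty}n\,a_nz^n,
\end{align*}
which by the Alexander relation lies in $\mathcal{S}^*_{Ne}$. Writing $g(z)=z+\sum_{n=2}^{\infty}b_nz^n$ with $b_n=n a_n$, \Cref{Theorem-Coeff-Est-Gutzmer-NeP} applied to $g$ yields
\begin{align*}
\sum_{n=2}^{\infty}\left((3n)^2-7^2\right)|b_n|^2\leq 40.
\end{align*}
Substituting $b_n=n a_n$, hence $|b_n|^2=n^2|a_n|^2$, gives exactly
\begin{align*}
\sum_{n=2}^{\infty}n^2\left((3n)^2-7^2\right)|a_n|^2\leq 40,
\end{align*}
as claimed.

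There is no real obstacle here: the argument is a direct transcription of \Cref{Theorem-Coeff-Est-Gutzmer-NeP} through the coefficient dictionary $a_n\mapsto n a_n$ induced by the operator $f\mapsto zf'$, so the only thing to be careful about is the bookkeeping of the Taylor coefficients of $zf'(z)$, which is immediate. (One could equivalently repeat the Gutzmer-type argument of \Cref{Theorem-Coeff-Est-Gutzmer-NeP} directly for $f\in\mathcal{C}_{Ne}$ using $1+zf''(z)/f'(z)\prec\varphi_{\scriptscriptstyle{Ne}}(z)$, but this is just the same computation after the substitution and adds nothing.)
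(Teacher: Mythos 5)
Your proposal is correct and is exactly the paper's argument: the paper states this result as an immediate consequence of \Cref{Theorem-Coeff-Est-Gutzmer-NeP} via the Alexander-type relation $f\in\mathcal{C}_{Ne}\iff zf'(z)\in\mathcal{S}^*_{Ne}$, which is precisely your substitution $b_n=na_n$. Nothing further is needed.
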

\begin{corollary}
Let $f(z)=z+\sum_{n=2}^{\infty}a_nz^n\in\mathcal{C}_{Ne}$. Then
\begin{align*}
|a_n|\leq\frac{1}{n}\sqrt{\frac{40}{(3n)^2-7^2}}\quad \text{ for } n=3,4,....
\end{align*}
\end{corollary}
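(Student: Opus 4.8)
The plan is to deduce this at once from the coefficient results already obtained in this section, via Alexander's relation. Suppose $f(z)=z+\sum_{n=2}^{\infty}a_nz^{n}\in\mathcal{C}_{Ne}$. By Alexander's theorem the function $g(z):=zf'(z)=z+\sum_{n=2}^{\infty}na_nz^{n}$ belongs to $\mathcal{S}^{*}_{Ne}$, and the $n$-th Taylor coefficient of $g$ is $na_n$. Applying the corollary already established for members of $\mathcal{S}^{*}_{Ne}$ to $g$ therefore gives
\[
|na_n|\le\sqrt{\frac{40}{(3n)^{2}-7^{2}}},\qquad n=3,4,\dots,
\]
and dividing by $n$ yields exactly the asserted bound.

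Equivalently, one can argue directly from the coefficient-sum inequality for $\mathcal{C}_{Ne}$ displayed just above (itself \Cref{Theorem-Coeff-Est-Gutzmer-NeP} transported through Alexander's relation), namely $\sum_{n=2}^{\infty}n^{2}\bigl((3n)^{2}-7^{2}\bigr)|a_n|^{2}\le 40$. For a fixed integer $n\ge 3$ the weight $(3n)^{2}-7^{2}=9n^{2}-49$ is strictly positive, so, retaining only the summand of index $n$ exactly as in the corresponding corollary for $\mathcal{S}^{*}_{Ne}$, one obtains $n^{2}(9n^{2}-49)|a_n|^{2}\le 40$, which is the same estimate after taking a square root. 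Either way the range $n=3,4,\dots$ is intrinsic: for $n=2$ the weight $(3\cdot 2)^{2}-7^{2}=-13$ is negative, so the $n=2$ term yields no information about $|a_2|$.

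I expect no genuine obstacle, the statement being a one-line corollary of the theorem preceding it; the only point worth a word is the sign of $(3n)^{2}-7^{2}$, which is precisely what fixes the admissible range of $n$. Should one wish to be fully scrupulous about the negative $n=2$ contribution in the second, direct argument, one additionally invokes the elementary first-coefficient estimate $|a_2|\le\tfrac12$, valid in $\mathcal{C}_{Ne}$ because $\varphi_{\scriptscriptstyle{Ne}}'(0)=1$; this is not elaborated, consistently with the treatment of the analogous corollary for $\mathcal{S}^{*}_{Ne}$.
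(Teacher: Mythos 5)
Your first argument --- passing to $g(z)=zf'(z)\in\mathcal{S}^*_{Ne}$ via Alexander's relation and applying the corollary already established for $\mathcal{S}^*_{Ne}$ to its coefficients $na_n$ --- is precisely the route the paper intends, and it is correct. I would, however, drop the second ``direct'' variant: since the $n=2$ summand of $\sum_{n\ge2}n^2\left((3n)^2-7^2\right)|a_n|^2\le 40$ is negative, retaining only the $n$-th term is not legitimate, and the proposed patch $|a_2|\le\frac{1}{2}$ only gives $n^2\left(9n^2-49\right)|a_n|^2\le 40+52|a_2|^2\le 53$, which does not recover the stated constant $40$.
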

\begin{remark}
It is clear that $\varphi_{\scriptscriptstyle{Ne}}(z)=1+z-z^3/3\in\mathcal{H}^2$, the Hardy class of analytic functions in $\mathbb{D}$, defined as
\begin{align*}
\mathcal{H}^2=\left\{\sum_{n=0}^{\infty}{b_nz^n}:\sum_{n=0}^{\infty}|b_n|^2<\infty\right\}.
\end{align*}
Therefore, it may be concluded from Ma and Minda \cite[Theorem 4]{Ma-Minda-1992-A-unified-treatment} that for a function $f(z)=z+\sum_{n=2}^{\infty}a_nz^n\in\mathcal{S}^*_{Ne}\;(\text{or }\mathcal{C}_{Ne})$ the sharp order of growth is
$|a_n|=O(1/n)\;(\text{or }O(1/n^2))$.
\end{remark}
\begin{theorem}
\begin{enumerate}[{\rm(i)}]
\item Let $F_{n}(z)=z+a_nz^n\;(n=2,3,...)$. Then $F_{n}\in\mathcal{S}^*_{Ne}$ if, and only if,
\begin{align*}
|a_n|\leq\frac{2}{3n-1}, \quad n=2,3,....
\end{align*}
\item The function $K_A(z)=z/(1-Az)^2$ belongs to $\mathcal{S}^*_{Ne}$ if, and only if, $|A|\leq{1/4}$.
\item Let $|b|<1$. Then $L_b(z)=ze^{bz}\in\mathcal{S}^*_{Ne}$ if, and only if, $|b|\leq{2/3}$.
\end{enumerate}
\end{theorem}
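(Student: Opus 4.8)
The plan is to treat all three parts by one device. In each case set $Q(z):=zf'(z)/f(z)$ and observe that it is a fractional linear (parts (i), (ii)) or affine (part (iii)) function of $z$, so that $Q$ maps $\mathbb{D}$ onto an open disk $D(c,R)$ whose centre lies on the real axis — the symmetry about $\mathbb{R}$ being forced by the real coefficients together with the rotational symmetry of $\mathbb{D}$. Then $f\in\mathcal{S}^*_{Ne}$ holds if and only if $D(c,R)\subseteq\Omega_{Ne}$. The proof of \Cref{Lemma-Radius-ra-NeP} identifies $r_a$ as the \emph{exact} distance from the real point $a$ to the nephroid, so for $1/3<c<5/3$ the containment $D(c,R)\subseteq\Omega_{Ne}$ is equivalent to the single inequality $R\le r_c$; when $c$ falls outside $(1/3,5/3)$ the disk already contains a point of the real axis outside $\Omega_{Ne}$, and containment fails. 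Thus each claim reduces to an explicit inequality in the modulus of the parameter.

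For (iii), $zL_b'(z)/L_b(z)=1+bz$, whose image of $\mathbb{D}$ is $D(1,|b|)$; by \Cref{Lemma-Radius-ra-NeP} with $a=1$ (so $r_1=2/3$) this lies in $\Omega_{Ne}$ exactly when $|b|\le 2/3$. For (ii), a short computation gives $zK_A'(z)/K_A(z)=(1+Az)/(1-Az)$; since $Az$ runs over $|\zeta|<|A|$, the midpoint and half-length of the segment joining the real-diameter endpoints $(1\pm|A|)/(1\mp|A|)$ show the image is $D(c,R)$ with $c=(1+|A|^2)/(1-|A|^2)\ge 1$ and $R=2|A|/(1-|A|^2)$. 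Using the branch $r_c=5/3-c$, the inequality $R\le r_c$ becomes, after clearing the positive factor $3(1-|A|^2)$, $8|A|^2+6|A|-2\le 0$, i.e. $2(4|A|-1)(|A|+1)\le 0$, which (as $|A|+1>0$) holds iff $|A|\le 1/4$; and $|A|\le 1/4$ forces $c<5/3$, so this is the correct branch, while $|A|>1/4$ makes either $R>r_c$ or $c\ge 5/3$, so membership fails.

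For (i), put $\rho=|a_n|$ and note $z^{n-1}$ runs over $\mathbb{D}$, so $zF_n'(z)/F_n(z)=(1+na_nz^{n-1})/(1+a_nz^{n-1})$ maps $\mathbb{D}$ onto the disk with centre $c(\rho)=(1-n\rho^2)/(1-\rho^2)$ and radius $R(\rho)=(n-1)\rho/(1-\rho^2)$. One checks $c(\rho)\in(1/3,1)$ for all $\rho\in(0,1/n]$ (it equals $1$ at $\rho=0$, decreases in $\rho$, and equals $n/(n+1)$ at $\rho=1/n$), so on this range \Cref{Lemma-Radius-ra-NeP} applies with $r_{c(\rho)}=c(\rho)-1/3$ and the containment amounts to
\begin{align*}
\Phi_n(\rho):=R(\rho)-c(\rho)+\frac13=\frac{(n-1)\rho-1+n\rho^2}{1-\rho^2}+\frac13\le 0.
\end{align*}
The heart of the matter is to show $\Phi_n$ is strictly increasing on $[0,1)$ — immediate since $R(\rho)$ increases and $c(\rho)$ decreases — with $\Phi_n(0)=-2/3<0$ and $\Phi_n\big(2/(3n-1)\big)=0$, the latter a direct algebraic check after substitution. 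This yields $F_n\in\mathcal{S}^*_{Ne}$ whenever $|a_n|\le 2/(3n-1)$. Conversely, if $2/(3n-1)<\rho\le 1/n$ then $\Phi_n(\rho)>0$ with $c(\rho)$ still in $(1/3,1)$, so the image disk leaves $\Omega_{Ne}$; if $1/n<\rho<1$ then $c(\rho)-R(\rho)<0$, so the image disk is not even contained in the right half-plane and $F_n\notin\mathcal{S}^*\supseteq\mathcal{S}^*_{Ne}$; and $\rho\ge 1$ is excluded since then $1+a_nz^{n-1}$ vanishes in $\overline{\mathbb{D}}$. Hence $F_n\in\mathcal{S}^*_{Ne}$ iff $|a_n|\le 2/(3n-1)$.

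The only step that is not mechanical is establishing the monotonicity of $\Phi_n$ and pinning the threshold at $\rho=2/(3n-1)$ in part (i); everything else is bookkeeping with fractional linear images and a single application of \Cref{Lemma-Radius-ra-NeP}.
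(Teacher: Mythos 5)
Your proof is correct and follows essentially the same route as the paper: compute $zf'(z)/f(z)$, identify its image of $\mathbb{D}$ as a disk centered on the real axis, and apply \Cref{Lemma-Radius-ra-NeP} to reduce containment in $\Omega_{Ne}$ to the same algebraic inequalities, yielding the thresholds $2/(3n-1)$, $1/4$ and $2/3$. The only difference is presentational: you make the necessity direction and the edge cases (center outside $(1/3,5/3)$, $|a_n|\geq 1/n$, etc.) explicit, where the paper disposes of them by invoking $\mathcal{S}^*_{Ne}\subset\mathcal{S}^*$.
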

\begin{proof}
(i) As $\mathcal{S}^*_{Ne}\subset\mathcal{S}^*$, we necessarily have $|a_n|\leq{1/n}$. 
It is easy to verify that the transformation
\begin{align*}
w(z)=\frac{zF'_{n}(z)}{F_{n}(z)}=\frac{1+na_nz^{n-1}}{1+a_nz^{n-1}}
\end{align*}
maps $\mathbb{D}$ onto the disk
\begin{align}\label{Eq-Disk(i)-Coeff-Est-NeP}
\left|w-\frac{1-n|a_n|^2}{1-|a_n|^2}\right|<\frac{(n-1)|a_n|}{1-|a_n|^2}.
\end{align}
Thus $F_{n}(z)$ belongs to the class $\mathcal{S}^*_{Ne}$ if, and only if, the disk (\ref{Eq-Disk(i)-Coeff-Est-NeP}) is contained in $\Omega_{Ne}$. Since
\begin{align*}
1-n|a_n|^2\leq1-|a_n|^2,
\end{align*}
it follows from \Cref{Lemma-Radius-ra-NeP} that the disk (\ref{Eq-Disk(i)-Coeff-Est-NeP}) is contained in $\Omega_{Ne}$ if, and only if,
\begin{align*}
\frac{(n-1)|a_n|}{1-|a_n|^2}\leq\frac{1-n|a_n|^2}{1-|a_n|^2}-\frac{1}{3}.
\end{align*}
That is, if, and only if, $|a_n|\leq2/(3n-1)$.\\
(ii) The fact that $\mathcal{S}^*_{Ne}\subset\mathcal{S}^*$ and $z/(1-z)^2\in\mathcal{S}^*$ lead us to conclude that $|A|\leq{1}$. This further gives
\begin{align}\label{Eq-GeqCond-Coeff-Est-NeP}
1+|A|^2\geq 1-|A|^2.
\end{align}
Also, the bilinear transformation
\begin{align*}
w(z)=\frac{zK'_{A}(z)}{K_{A}(z)}=\frac{1+Az}{1-Az}, \quad |A|\leq1
\end{align*}
maps $\mathbb{D}$ onto the disk
\begin{align}\label{Eq-Disk(ii)-Coeff-Est-NeP}
\left|w-\frac{1+|A|^2}{1-|A|^2}\right|<\frac{2|A|}{1-|A|^2}.
\end{align}
Therefore, $K_{A}(z)\in\mathcal{S}^*_{Ne}$ if, and only if, the disk (\ref{Eq-Disk(ii)-Coeff-Est-NeP}) lies inside the region $\Omega_{Ne}$. By (\ref{Eq-GeqCond-Coeff-Est-NeP}) and \Cref{Lemma-Radius-ra-NeP}, this is true if, and only if,
\begin{align*}
\frac{2|A|}{1-|A|^2}\leq\frac{5}{3}-\frac{1+|A|^2}{1-|A|^2},
\end{align*}
or equivalently, if, and only if, $|A|\leq1/4$.\\
(iii) Clearly, the linear transformation
$w(z)=zL'_{b}(z)/L_{b}(z)=1+bz$ maps $\mathbb{D}$ onto the disk
$\left|w-1\right|<|b|$. In view of \Cref{Lemma-Radius-ra-NeP}, this disk lies in the interior of $\Omega_{Ne}$ if, and only if, $|b|\leq{2/3}$.
\end{proof}
\begin{corollary}
\begin{enumerate}[{\rm(i)}]
\item $z+a_nz^n\in\mathcal{C}_{Ne}$ if, and only if,
\begin{align*}
|a_n|\leq\frac{2}{n(3n-1)}, \quad n=2,3,....
\end{align*}
\item $z/(1-Az)\in\mathcal{C}_{Ne}$ if, and only if, $|A|\leq{1/4}$.
\item $(e^{bz}-1)/b\in\mathcal{C}_{Ne}$ if, and only if, $|b|\leq{2/3}$.
\end{enumerate}
\end{corollary}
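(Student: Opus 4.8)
The plan is to derive each of the three equivalences from the corresponding part of the preceding theorem by invoking the Alexander-type relation $f\in\mathcal{C}_{Ne}\iff zf'\in\mathcal{S}^*_{Ne}$ established earlier. The entire argument reduces to computing $zf'(z)$ for each candidate function and recognizing the result as one of the functions $F_n$, $K_A$, or $L_b$ treated in the preceding theorem, after which the stated bound is read off directly.

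First I would handle (i). Writing $f(z)=z+a_nz^n$, one computes $zf'(z)=z+na_nz^n$, which is exactly $F_n$ with the coefficient $na_n$ in place of $a_n$. Hence $f\in\mathcal{C}_{Ne}$ if and only if $z+na_nz^n\in\mathcal{S}^*_{Ne}$, and part (i) of the preceding theorem gives $|na_n|\le 2/(3n-1)$, i.e.\ $|a_n|\le 2/\bigl(n(3n-1)\bigr)$. Next, for (ii), differentiating $f(z)=z/(1-Az)$ gives $f'(z)=1/(1-Az)^2$, so $zf'(z)=z/(1-Az)^2=K_A(z)$; part (ii) of the theorem then yields $f\in\mathcal{C}_{Ne}\iff|A|\le 1/4$. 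Finally, for (iii), with $f(z)=(e^{bz}-1)/b$ one has $f'(z)=e^{bz}$, hence $zf'(z)=ze^{bz}=L_b(z)$, and part (iii) of the theorem gives $f\in\mathcal{C}_{Ne}\iff|b|\le 2/3$.

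There is essentially no obstacle here: each step is a one-line derivative computation followed by an application of the already-proved theorem, so the corollary is genuinely immediate. The only point requiring mild care is the relabeling in part (i), where passing from $f$ to $zf'$ multiplies the $n$-th Taylor coefficient by $n$, so the bound $2/(3n-1)$ from the starlike case becomes $2/\bigl(n(3n-1)\bigr)$ in the convex case; the cases (ii) and (iii) involve no such rescaling since $zf'$ coincides with $K_A$ and $L_b$ with the same parameter. Accordingly I would present the proof as a short paragraph noting $f\in\mathcal{C}_{Ne}\iff zf'\in\mathcal{S}^*_{Ne}$, listing the three identities $zf'=F_n$ (with coefficient $na_n$), $zf'=K_A$, $zf'=L_b$, and citing the respective parts of the preceding theorem.
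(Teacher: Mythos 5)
Your proof is correct and is exactly the intended argument: the paper states this corollary without proof, as an immediate consequence of the preceding theorem via the Alexander-type relation $f\in\mathcal{C}_{Ne}\iff zf'\in\mathcal{S}^*_{Ne}$, which is precisely what you carry out (including the coefficient rescaling $a_n\mapsto na_n$ in part (i)).
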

For $z\in\mathbb{D}$, define the class of functions $\mathcal{P}$ as
\begin{align*}
\mathcal{P}:=\left\{p(z)=1+\sum_{n=1}^{\infty}p_{n}z^n\in\mathcal{H}: \mathrm{Re}(p(z))>0 \right\}.
\end{align*}
\begin{lemma}
Let $p(z)=1+\sum_{n=1}^{\infty}p_{n}z^n\in\mathcal{P}$. Then \cite{Ma-Minda-1992-A-unified-treatment}
\begin{align}\label{Eq1-Lemma-of-p(z)-Coeff-Est-NeP}
|p_n|\leq{2} \text{ for } n\geq{1},
\end{align}
and for any complex number $\xi$, we have
\begin{align}\label{Eq2-Lemma-of-p(z)-Coeff-Est-NeP}
|{p_2}-\xi{p_1^2}|\leq{2\max\left\{1,|2\xi-1|\right\}}.
\end{align}
The results are sharp for the functions
\begin{align*}
p(z)=\frac{1+z}{1-z},\quad p(z)=\frac{1+z^2}{1-z^2}.
\end{align*}
\end{lemma}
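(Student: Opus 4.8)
The plan is to obtain both bounds from the classical structure of the Carath\'{e}odory class $\mathcal{P}$. For the coefficient estimate \eqref{Eq1-Lemma-of-p(z)-Coeff-Est-NeP} I would use Carath\'{e}odory's argument directly: for fixed $0<r<1$ and $n\ge 1$, the quantity $p_nr^n$ is exactly the $n$-th Fourier coefficient of the function $\theta\mapsto 2\,\mathrm{Re}\,p(re^{i\theta})$, which is non-negative and has mean value $2\,\mathrm{Re}\,p(0)=2$ over $[0,2\pi]$; hence $|p_n|r^n\le 2$, and letting $r\to 1^-$ gives $|p_n|\le 2$. (Equivalently one may quote the Herglotz representation $p(z)=\int_{|\zeta|=1}\frac{1+\zeta z}{1-\zeta z}\,d\mu(\zeta)$ with $\mu$ a probability measure and read off $p_n=2\int \zeta^n\,d\mu$.)

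For \eqref{Eq2-Lemma-of-p(z)-Coeff-Est-NeP} the idea is to pass to the associated Schwarz function. Writing $w(z)=(p(z)-1)/(p(z)+1)=c_1z+c_2z^2+\cdots$, we have $w\in\mathcal{H}$, $w(0)=0$, $|w(z)|<1$, and expanding $p(z)\bigl(1-w(z)\bigr)=1+w(z)$ and comparing the coefficients of $z$ and $z^2$ yields $p_1=2c_1$ and $p_2=2c_1^2+2c_2$. Substituting,
\begin{align*}
p_2-\xi p_1^2 = 2c_2 + 2(1-2\xi)c_1^2 .
\end{align*}
The two Schwarz-lemma estimates I need are $|c_1|\le 1$ and $|c_2|\le 1-|c_1|^2$; the first is immediate, and the second follows (when $|c_1|<1$) by applying the Schwarz lemma to $g(z)=\bigl(w(z)/z-c_1\bigr)/\bigl(1-\overline{c_1}\,w(z)/z\bigr)$, which is analytic with $g(0)=0$ and $g'(0)=c_2/(1-|c_1|^2)$, the case $|c_1|=1$ being trivial since then $w(z)=c_1z$.

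Feeding these bounds into the identity above and using the triangle inequality,
\begin{align*}
|p_2-\xi p_1^2| \le 2|c_2| + 2|1-2\xi|\,|c_1|^2 \le 2(1-|c_1|^2) + 2|1-2\xi|\,|c_1|^2 = 2 + 2\bigl(|1-2\xi|-1\bigr)|c_1|^2 .
\end{align*}
If $|2\xi-1|\le 1$ the coefficient of $|c_1|^2$ is non-positive, so the right-hand side is at most $2$; if $|2\xi-1|\ge 1$ it is non-negative and, since $|c_1|\le 1$, the right-hand side is at most $2|2\xi-1|$. In either case $|p_2-\xi p_1^2|\le 2\max\{1,|2\xi-1|\}$. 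For sharpness, $p(z)=(1+z)/(1-z)$ gives $p_1=p_2=2$, hence $|p_2-\xi p_1^2|=2|1-2\xi|$, attaining the bound when $|2\xi-1|\ge 1$, while $p(z)=(1+z^2)/(1-z^2)$ gives $p_1=0$, $p_2=2$, hence $|p_2-\xi p_1^2|=2$, attaining the bound when $|2\xi-1|\le 1$.

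I expect the only step that is not purely mechanical to be the Schwarz-function inequality $|c_2|\le 1-|c_1|^2$; the rest is a short coefficient comparison and a two-case triangle-inequality argument. Since the lemma is classical and already credited to Ma and Minda in the statement, the written proof can simply refer to \cite{Ma-Minda-1992-A-unified-treatment}, but the argument sketched above is the one I would reconstruct if a self-contained proof were wanted.
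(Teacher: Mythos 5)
Your proposal is correct, and there is nothing in the paper to conflict with: the paper does not prove this lemma at all, it simply quotes it from Ma and Minda \cite{Ma-Minda-1992-A-unified-treatment}, so citing it, as you note at the end, is exactly what the paper does. Your reconstruction is the standard argument and every step checks out: the Carath\'{e}odory bound $|p_n|\leq 2$ via the non-negativity of $\operatorname{Re}p$ on circles (or the Herglotz representation); the coefficient identities $p_1=2c_1$, $p_2=2c_1^2+2c_2$ from $p=(1+w)/(1-w)$; the Schwarz-function estimates $|c_1|\leq 1$ and $|c_2|\leq 1-|c_1|^2$; and the two-case triangle-inequality bound $|p_2-\xi p_1^2|\leq 2+2\left(|1-2\xi|-1\right)|c_1|^2\leq 2\max\{1,|2\xi-1|\}$, with sharpness verified by $(1+z)/(1-z)$ when $|2\xi-1|\geq1$ and by $(1+z^2)/(1-z^2)$ when $|2\xi-1|\leq1$.
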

\begin{theorem}
\begin{enumerate}[{\rm(i)}]
\item If $f(z)=z+{a_2}z^2+{a_3}z^3+\cdots\in\mathcal{S}^*_{Ne}$, then $|a_2|\leq{1},\,|a_3|\leq1/2$, and for any complex number
$\mu$,
\begin{align*}
|a_3-\mu{a_2^2}|\leq\frac{1}{2}\max\left\{1,|2\mu-1|\right\}.
\end{align*}
In particular, $|{a_3}-a_2^2|\leq1/2$. Equality holds for the function $f_{\scriptscriptstyle{Ne}}(z)$.
\item If $f(z)=z+{a_2}z^2+{a_3}z^3+\cdots\in\mathcal{C}_{Ne}$, then
$|a_2|\leq1/2$ and $|a_3|\leq1/6$. Equality holds for the function $\widehat{f}_{\scriptscriptstyle{Ne}}(z)$. Further, for any complex number $\mu$,
\begin{align*}
|a_3-\mu{a_2^2}|\leq\frac{1}{6}\max\left\{1,\left|\frac{3}{2}\mu-1\right|\right\}.
\end{align*}
\end{enumerate}
\end{theorem}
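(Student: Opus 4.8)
The plan is to run the standard Ma--Minda argument. For part (i), let $f(z)=z+a_2z^2+a_3z^3+\cdots$ belong to $\mathcal{S}^*_{Ne}$, so that $zf'(z)/f(z)\prec\varphi_{\scriptscriptstyle{Ne}}(z)=1+z-z^3/3$; hence there is a Schwarz function $w(z)=c_1z+c_2z^2+\cdots$ with $zf'(z)/f(z)=\varphi_{\scriptscriptstyle{Ne}}(w(z))$. Passing to the associated Carath\'{e}odory function $p(z)=(1+w(z))/(1-w(z))=1+p_1z+p_2z^2+\cdots\in\mathcal{P}$ gives $w(z)=\frac{1}{2}p_1z+\frac{1}{2}\bigl(p_2-\frac{1}{2}p_1^2\bigr)z^2+\cdots$. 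Because $w(0)=0$, the term $w^3/3$ of $\varphi_{\scriptscriptstyle{Ne}}$ is of order $z^3$ and thus invisible at the level of $a_2,a_3$; so to second order $\varphi_{\scriptscriptstyle{Ne}}(w(z))=1+w(z)+\cdots=1+\frac{1}{2}p_1z+\frac{1}{2}\bigl(p_2-\frac{1}{2}p_1^2\bigr)z^2+\cdots$. Comparing with $zf'(z)/f(z)=1+a_2z+(2a_3-a_2^2)z^2+\cdots$, I would read off $a_2=\frac{1}{2}p_1$ and, after substituting $a_2^2=\frac{1}{4}p_1^2$ into $2a_3-a_2^2=\frac{1}{2}\bigl(p_2-\frac{1}{2}p_1^2\bigr)$, the clean identity $a_3=\frac{1}{4}p_2$.

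The bounds now follow directly from the coefficient lemma for $\mathcal{P}$ quoted above: $|p_1|\le 2$ yields $|a_2|\le 1$ and $|p_2|\le 2$ yields $|a_3|\le \frac{1}{2}$. Since $a_3-\mu a_2^2=\frac{1}{4}\bigl(p_2-\mu p_1^2\bigr)$, the inequality $|p_2-\xi p_1^2|\le 2\max\{1,|2\xi-1|\}$ with $\xi=\mu$ gives $|a_3-\mu a_2^2|\le\frac{1}{2}\max\{1,|2\mu-1|\}$, and the choice $\mu=1$ specializes to $|a_3-a_2^2|\le\frac{1}{2}$. Sharpness is confirmed by $f_{\scriptscriptstyle{Ne}}(z)=ze^{z-z^3/9}=z+z^2+\frac{1}{2}z^3+\cdots$, for which $a_2=1$, $a_3=\frac{1}{2}$ and $a_3-a_2^2=-\frac{1}{2}$.

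For part (ii) I would use the Alexander-type relation $f\in\mathcal{C}_{Ne}\iff zf'(z)\in\mathcal{S}^*_{Ne}$. Writing $zf'(z)=z+b_2z^2+b_3z^3+\cdots$ with $b_2=2a_2$ and $b_3=3a_3$, part (i) gives $|a_2|=\frac{1}{2}|b_2|\le\frac{1}{2}$ and $|a_3|=\frac{1}{3}|b_3|\le\frac{1}{6}$. For the functional, $a_3-\mu a_2^2=\frac{1}{3}\bigl(b_3-\frac{3\mu}{4}b_2^2\bigr)$, so applying the Fekete--Szeg\"{o} estimate of part (i) with parameter $3\mu/4$ produces $|a_3-\mu a_2^2|\le\frac{1}{6}\max\{1,|\frac{3}{2}\mu-1|\}$. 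Equality is checked against $\widehat{f}_{\scriptscriptstyle{Ne}}(z)=z+\frac{1}{2}z^2+\frac{1}{6}z^3+\cdots$, which gives $a_2=\frac{1}{2}$ and $a_3=\frac{1}{6}$.

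There is no genuine obstacle here: the only care needed is in expanding $\varphi_{\scriptscriptstyle{Ne}}(w(z))$ to second order --- noting that the cubic term $-z^3/3$ plays no role for $a_2,a_3$ since $w$ vanishes at the origin --- and in the algebraic cancellation that reduces $2a_3-a_2^2=\frac{1}{2}(p_2-\frac{1}{2}p_1^2)$ to $a_3=\frac{1}{4}p_2$ once $a_2^2=\frac{1}{4}p_1^2$ is used.
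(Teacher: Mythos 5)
Your proposal is correct and follows essentially the same route as the paper: pass from the subordination to a Schwarz function, write $w=(p-1)/(p+1)$ with $p\in\mathcal{P}$, read off $a_2=\tfrac{1}{2}p_1$, $a_3=\tfrac{1}{4}p_2$, and apply the lemma $|p_n|\le 2$, $|p_2-\xi p_1^2|\le 2\max\{1,|2\xi-1|\}$. The only (harmless) variation is in part (ii), where you invoke the Alexander relation $f\in\mathcal{C}_{Ne}\iff zf'\in\mathcal{S}^*_{Ne}$ instead of expanding $1+zf''(z)/f'(z)$ directly as the paper does; both yield the same identities $a_2=p_1/4$, $a_3=p_2/12$ and hence the same bounds.
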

\begin{proof}
\begin{enumerate}[{\rm(i)}]
\item
Since $f\in\mathcal{S}^*_{Ne}$, we can find a Schwarz function $w(z)={w_1}z+{w_2}z^2+{w_3}z^3+\cdots$ such that
\begin{align}\label{Eq-Def-Implication-S*Ne-Coeff-Est-NeP}
\frac{zf'(z)}{f(z)}=\varphi_{\scriptscriptstyle{Ne}}\big(w(z)\big)=1+w(z)-\frac{1}{3}(w(z))^3.
\end{align}
Taking $w(z)=(p(z)-1)/(p(z)+1)$, where $p(z)=1+{p_1}z+{p_2}z^2+{p_3}z^3+\cdots\in\mathcal{P}$, and doing some simple calculations we obtain
\begin{align*}
w(z)=\frac{p_1 z}{2}+&\left(\frac{p_2}{2}-\frac{p_1^2}{4}\right) z^2+\frac{1}{8} \left(p_1^3-4 p_2 p_1+4 p_3\right) z^3 \\
            &+\frac{1}{16} \left(-p_1^4+6 p_2 p_1^2-8 p_3 p_1-4 p_2^2+8 p_4\right) z^4+\cdots.
\end{align*}
This gives
\begin{align}\label{Eq-Expansion-w(z)-Coeff-Est-NeP}
1+w(z)-\frac{1}{3}(w(z))^3=1+\frac{p_1}{2}z+&\left(\frac{p_2}{2}-\frac{p_1^2}{4}\right) z^2
                                 +\left(\frac{p_1^3}{12}-\frac{p_1 p_2}{2}+\frac{p_3}{2}\right) z^3 \nonumber\\
                        &+\left(\frac{1}{4} p_1^2p_2-\frac{p_1 p_3}{2}-\frac{p_2^2}{4}+\frac{p_4}{2}\right) z^4+\cdots.
\end{align}
Also it is easy to check that for $f(z)=z+{a_2}z^2+{a_3}z^3+\cdots$,
\begin{align}\label{Eq-Expansion-zf'/f-Coeff-Est-NeP}
\frac{zf'(z)}{f(z)}=1+a_2 z+&\left(2 a_3-a_2^2\right) z^2+\left(a_2^3-3 a_2 a_3+3 a_4\right) z^3 \nonumber\\
                        &+\left(-a_2^4+4 a_2^2a_3 -4 a_2 a_4-2 a_3^2+4 a_5\right) z^4+\cdots.
\end{align}
Using (\ref{Eq-Expansion-w(z)-Coeff-Est-NeP}) and (\ref{Eq-Expansion-zf'/f-Coeff-Est-NeP}) in the equation (\ref{Eq-Def-Implication-S*Ne-Coeff-Est-NeP}), and comparing coefficients we get
\begin{align}\label{Eqs-Bounds-a2a3-Coeff-Est-NeP}
a_2=\frac{p_1}{2},\quad a_3=\frac{p_2}{4}.
\end{align}
Applying (\ref{Eq1-Lemma-of-p(z)-Coeff-Est-NeP}) in (\ref{Eqs-Bounds-a2a3-Coeff-Est-NeP}), we obtain $|a_2|\leq{1}$ and $|a_3|\leq1/2$.
Also, from (\ref{Eqs-Bounds-a2a3-Coeff-Est-NeP}) and (\ref{Eq2-Lemma-of-p(z)-Coeff-Est-NeP}), we have
\begin{align*}
|{a_3}-\mu{a_2^2}|=\frac{1}{4}|{p_2}-\mu{p_1^2}|\leq\frac{1}{2}\max\left\{1,|2\mu-1|\right\}.
\end{align*}
\item
Similarly, $f\in\mathcal{C}_{Ne}$ implies
\begin{align}\label{Eq-Def-Implication-CNe-Coeff-Est-NeP}
1+\frac{zf''(z)}{f'(z)}=1+w(z)-\frac{1}{3}(w(z))^3.
\end{align}
Using (\ref{Eq-Expansion-w(z)-Coeff-Est-NeP}) and the expansion
\begin{align*}
1+\frac{zf''(z)}{f'(z)}=1+2 a_2 z+&\left(6 a_3-4 a_2^2\right) z^2+\left(8 a_2^3-18 a_3 a_2+12 a_4\right) z^3\\
                            &+\left(-16 a_2^4+48 a_3 a_2^2-32 a_4 a_2-18 a_3^2+20 a_5\right) z^4+\cdots
\end{align*}
in (\ref{Eq-Def-Implication-CNe-Coeff-Est-NeP}), and comparing we get $a_2=p_1/4$ and $a_3=p_2/12$. Now, as in part (i), the results easily follow. \qedhere
\end{enumerate}
\end{proof}
\begin{remark}
From \Cref{Theorem-Inclusion-Relations-NeP} (a), we have $\mathcal{S}^*_{Ne}\subset\mathcal{S}^*(1-2\sqrt{2}/3)$. A result in Schild \cite{Schild-1965-Starlike-order-Alpha} yields that for every function $f(z)=z+\sum_{n=2}^{\infty}a_nz^n$ in $\mathcal{S}^*_{Ne}$, we have
\begin{align*}
|a_n|\leq\frac{1}{(n-1)!}\prod_{k=2}^{n}\left(k-2+\frac{4\sqrt{2}}{3}\right),\quad n\geq2.
\end{align*}
Further, Alexander type theorem shows that if $f(z)=z+\sum_{n=2}^{\infty}a_nz^n\in\mathcal{C}_{Ne}$, then
\begin{align*}
|a_n|\leq\frac{1}{n!}\prod_{k=2}^{n}\left(k-2+\frac{4\sqrt{2}}{3}\right),\quad n\geq2.
\end{align*}
\end{remark}
\section{Subordination Implications} 
\begin{definition}[\cite{Miller-Mocanu-Book-2000-Diff-Sub}]
Let $\Psi:\mathbb{C}^2\times\mathbb{D}\to\mathbb{C}$ be a complex function, and let $h:\mathbb{D}\to\mathbb{C}$ be univalent. If $p\in\mathcal{H}$ satisfies the first-order differential subordination
\begin{align}\label{Def-Diff-Subord-Psi}
\Psi(p(z),zp'(z);z)\prec{h(z)},\quad z\in\mathbb{D},
\end{align}
then $p$ is called a solution of the differential subordination {\rm(\ref{Def-Diff-Subord-Psi})}. If $q:\mathbb{D}\to\mathbb{C}$ is univalent and $p{\prec}q$ for all $p$ satisfying {\rm(\ref{Def-Diff-Subord-Psi})}, then $q(z)$ is said to be a dominant of {\rm(\ref{Def-Diff-Subord-Psi})}. A dominant $\tilde{q}$ that satisfies $\tilde{q}\prec q$ for all dominants $q$ of {\rm(\ref{Def-Diff-Subord-Psi})} is called the best dominant of {\rm(\ref{Def-Diff-Subord-Psi})}. The best dominant is unique up to the rotations of $\mathbb{D}$.
\end{definition}
Let $p(z)$ be an analytic function satisfying $p(0)=1$. Nunokawa et al. \cite{Nunokawa-Owa-1989-One-criterion-PAMS} proved that the differential subordination $1+zp'(z)\prec1+z$ implies $p(z)\prec1+z$ and, as a result, gave a criterion for $f\in\mathcal{A}$ to be univalent. Ali et al. \cite{Ali-Ravi-2007-Janowski-Starlikeness-IJMMS} replaced $1+z$ by $(1+Dz)/(1+Ez)$ and determined the conditions on $\beta\in\mathbb{R}$ in terms of $A,B,D,E\in[-1,1]\,(B<A \text{ and } E<D)$ so that the subordination $1+\beta{zp'(z)/p^j(z)}\prec(1+Dz)/(1+Ez)\,(j=0,1,2)$ implies $p(z)\prec(1+Az)/(1+Bz)$ and, further, established some sufficient conditions for Janowski starlikeness. Ali et al. \cite{Ali-Ravi-2012-Diff-Sub-LoB-TaiwanJM} determined the conditions on $\beta$ so that the subordination $p(z)\prec\sqrt{1+z}$ holds true whenever
$1+\beta{zp'(z)/p^j(z)}\prec\sqrt{1+z},\,j=0,1,2$. Kumar and Ravichandran \cite{SushilKumar-Ravi-2018-Sub-Positive-RP-CAOT} gave sharp bounds for $\beta$ in order to guarantee the following subordination implication:
\begin{align*}
1+\beta\frac{zp'(z)}{p^j(z)}&\prec\varphi_{\scriptscriptstyle{0}}(z),\,\sqrt{1+z},\,1+\sin{z}{\implies} p(z)\prec{e^z}.
\end{align*}
As a consequence, they also established some sufficient conditions for the function class $\mathcal{S}^*_e$. Recently, Ahuja et al. \cite{Ahuja-Ravi-2018-App-Diff-Sub-Stud-Babe-Bolyai} obtained sharp bounds on $\beta$ so that the differential subordination $1+\beta{zp'(z)/p^j(z)}\prec\sqrt{1+z}\,(j=0,1,2)$ implies $p(z)\prec:\sqrt{1+z},\,
\varphi_{\scriptscriptstyle {0}}(z),\,1+\sin{z},\,\sqrt{1+z^2}+z,\,1+\frac{4}{3}z+\frac{2}{3}z^2,\,
\frac{1+Az}{1+Bz}$, where $-1<A<B<1$.
Similar problems have been considered in \cite{Cho-Ravi-2018-Diff-Sub-Booth-Lem-TJM,Goel-Siva-2019-Sigmoid-BMMS,
Siva-Govind-Muru-2019-Leaf-Like-JCAA}.\\
In this section, we will discuss certain implications of the first order differential subordination
$1+\beta{zp'(z)}/{p^j(z)}\prec\varphi_{\scriptscriptstyle{Ne}}(z)=1+z-{z^3}/{3},\, j=0,1,2$. As a consequence, we obtain sufficient conditions for $f\in\mathcal{A}$ to belong to some of the early mentioned Ma-Minda type function classes. To prove our results, we will make use of the following lemma.
\begin{lemma}[{\cite[Theorem 3.4h, p. 132]{Miller-Mocanu-Book-2000-Diff-Sub}}]\label{Lemma-3.4h-p132-Miller-Mocanu-Book}
Let $q$ be univalent in $\mathbb{D}$, and let $\psi$ and $\nu$ be analytic in a domain $D$ containing $q(\mathbb{D})$ with $\psi(w)\neq0$ when $w\in{q(\mathbb{D})}$. Set
\begin{align*}
Q(z):=zq'(z)\psi\big(q(z)\big) \quad \text{ and } \quad  h(z):=\nu\big(q(z)\big)+Q(z).
\end{align*}
Suppose that either
\begin{enumerate}[{\rm(i)}]
\item $h$ is convex, or
\item $Q$ is starlike.\\
In addition, assume that
\item $\mathrm{Re}\left(\frac{zh'(z)}{Q(z)}\right)>0$.
\end{enumerate}
If $p\in\mathcal{H}$ with $p(0)=q(0)$, $p(\mathbb{D})\subset{D}$ and
\begin{align*}
\nu\big(p(z)\big)+zp'(z)\psi\big(p(z)\big)\prec\nu\big(q(z)\big)+zq'(z)\psi\big(q(z)\big),
\end{align*}
then $p\prec{q}$, and $q$ is the best dominant.
\end{lemma}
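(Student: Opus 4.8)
The plan is to follow the classical argument of Miller and Mocanu for first-order differential subordinations; indeed the statement is exactly \cite[Theorem 3.4h]{Miller-Mocanu-Book-2000-Diff-Sub}, so here I only indicate the structure of the proof. The engine of the argument is the fundamental lemma on subordination (a sharpened form of Jack's lemma): if $p$ is analytic in $\mathbb{D}$, $q$ is univalent in $\mathbb{D}$, $p(0)=q(0)$, and $p\not\prec q$, then there exist a point $z_0\in\mathbb{D}$, a boundary point $\zeta_0\in\partial\mathbb{D}$, and a real number $m\ge 1$ such that $p(z_0)=q(\zeta_0)\in\partial q(\mathbb{D})$, $p(\{|z|<|z_0|\})\subset q(\mathbb{D})$, and $z_0 p'(z_0)=m\,\zeta_0 q'(\zeta_0)$. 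If $q$ is not regular up to $\partial\mathbb{D}$, one first runs the argument with $q_\rho(z):=q(\rho z)$ and lets $\rho\to 1^-$.

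First I would argue by contradiction: assume $p\not\prec q$ and invoke the fundamental lemma to produce $z_0,\zeta_0,m$ as above. Since $p(\mathbb{D})\subset D$ and $p(z_0)=q(\zeta_0)\in q(\mathbb{D})\subset D$, both $\nu$ and $\psi$ may be evaluated at $p(z_0)$, and substituting $z_0 p'(z_0)=m\,\zeta_0 q'(\zeta_0)$ gives
\[
\nu\big(p(z_0)\big)+z_0 p'(z_0)\,\psi\big(p(z_0)\big)=\nu\big(q(\zeta_0)\big)+m\,\zeta_0 q'(\zeta_0)\psi\big(q(\zeta_0)\big)=h(\zeta_0)+(m-1)\,Q(\zeta_0),
\]
using $h=\nu\circ q+Q$ and $Q(\zeta)=\zeta q'(\zeta)\psi\big(q(\zeta)\big)$. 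The hypothesis $\nu(p(z))+zp'(z)\psi(p(z))\prec h(z)$ forces the left-hand value to lie in $h(\mathbb{D})$, so the whole proof reduces to showing that it does not; this is where conditions (i)/(ii) together with (iii) are used.

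The core step, and the main obstacle, is the geometric claim that $h(\zeta_0)+(m-1)Q(\zeta_0)\notin h(\mathbb{D})$ for every $m\ge 1$. I would treat the two alternatives separately. When $h$ is convex (case (i)), the boundary $\partial h(\mathbb{D})$ at $h(\zeta_0)$ has tangent direction $i\zeta_0 h'(\zeta_0)$, so $\zeta_0 h'(\zeta_0)$ points along the outward normal; condition (iii), $\mathrm{Re}\big(\zeta_0 h'(\zeta_0)/Q(\zeta_0)\big)\ge 0$, then forces $Q(\zeta_0)$ to have a nonnegative component along that outward normal, and convexity shows that displacing the boundary point $h(\zeta_0)$ by the nonnegative multiple $(m-1)Q(\zeta_0)$ leaves $h(\mathbb{D})$. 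When $Q$ is starlike (case (ii)), one uses $Q(0)=0$, so $Q(\mathbb{D})$ is starlike with respect to the origin and hence $mQ(\zeta_0)\notin Q(\mathbb{D})$ for $m>1$; combining this with (iii)—which makes $h$ close-to-convex with respect to $Q$, in particular univalent—one again concludes that the displaced point escapes $h(\mathbb{D})$. The borderline $m=1$ is immediate: the value is then exactly $h(\zeta_0)\in\partial h(\mathbb{D})$, which is not in the open set $h(\mathbb{D})$ by univalence of $h$. In every case the subordination is contradicted, so $p\prec q$.

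Finally, that $q$ is the best dominant follows by a formal observation: $q$ itself solves the differential subordination, since $\nu(q(z))+zq'(z)\psi(q(z))=h(z)\prec h(z)$; hence applying the implication just proved with $p=q$ shows $q\prec\tilde q$ for every dominant $\tilde q$, and the best dominant is unique up to rotations of $\mathbb{D}$. The only delicate points in executing this are the precise statement and invocation of the fundamental subordination lemma (including the $q_\rho\to q$ device when $q$ is not continuous on $\overline{\mathbb{D}}$) and the normal-direction and starlikeness geometry of the preceding paragraph; everything else is bookkeeping.
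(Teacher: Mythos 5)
The paper does not prove this lemma at all --- it is imported verbatim as \cite[Theorem 3.4h]{Miller-Mocanu-Book-2000-Diff-Sub} --- so there is no internal proof to compare against; your outline is the standard Miller--Mocanu argument for exactly that theorem (contradiction via the Jack--Miller--Mocanu boundary lemma with the $q_\rho$ approximation, reduction to the exteriority claim $h(\zeta_0)+(m-1)Q(\zeta_0)\notin h(\mathbb{D})$, then best dominance because $q$ itself solves the subordination), and that structure is correct. Your case (i) reasoning (supporting half-plane of the convex domain $h(\mathbb{D})$ at $h(\zeta_0)$ plus $\mathrm{Re}\bigl(\zeta_0 h'(\zeta_0)/Q(\zeta_0)\bigr)\geq 0$) is sound as stated. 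The one place where the sketch falls short of a proof is case (ii): from $mQ(\zeta_0)\notin Q(\mathbb{D})$ and ``$h$ is close-to-convex with respect to $Q$'' it does not yet follow that $h(\zeta_0)+(m-1)Q(\zeta_0)$ leaves $h(\mathbb{D})$; the actual content of the theorem is that, under (ii) and (iii), the rays $h(\zeta)+tQ(\zeta)$, $t\geq 0$, $\zeta\in\partial\mathbb{D}$, cover the complement of $h(\mathbb{D})$ without re-entering it, which in Miller--Mocanu is established through the admissibility machinery (their Lemma 2.2d and Theorem 2.3b) rather than by the starlikeness of $Q(\mathbb{D})$ alone. You flag this as the main obstacle, which is fair, but to be self-contained the proof would need that boundary-geometry argument spelled out; also note the small slip that $q(\zeta_0)$ lies on $\partial q(\mathbb{D})$, and it is in $D$ because it equals $p(z_0)\in p(\mathbb{D})\subset D$, not because it lies in $q(\mathbb{D})$.
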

\subsection*{Implications of $1+\beta{zp'(z)}\prec\varphi_{\scriptscriptstyle{Ne}}(z)$}
\begin{theorem}\label{Thrm-Subord-Implications-Neph-j0-NeP}
Let $p\in\mathcal{H}$ such that $p(0)=1$, and let $1+\beta{zp'(z)}\prec\varphi_{\scriptscriptstyle{Ne}}(z)$. Then the following subordinations hold:
\begin{enumerate}[{\rm(a)}]
\item\label{Subord-a} $p(z)\prec\varphi_{\scriptscriptstyle{\leftmoon}}(z)=\sqrt{1+z^2}+z$ whenever
           $\beta\geq\frac{4}{9} \left(2+\sqrt{2}\right)\approx1.51743$.
\item\label{Subord-b} $p(z)\prec\varphi_{\scriptscriptstyle{C}}(z)=1+\frac{4}{3}z+\frac{2}{3}z^2$ whenever $\beta\geq\frac{4}{3}$.
\item\label{Subord-c} $p(z)\prec\varphi_{\scriptscriptstyle{lim}}(z)=1+\sqrt{2}z+\frac{1}{2}z^2$ whenever $\beta\geq\frac{16}{63}(1+2\sqrt{2})\approx0.972299$.
\item\label{Subord-d} $p(z)\prec{e}^z$ whenever $\beta\geq\frac{8 e}{9 (e-1)}\approx1.4062$.
\item\label{Subord-e} $p(z)\prec\varphi_{\scriptscriptstyle{S}}(z)=1+\sin{z}$ whenever $\beta\geq\frac{8}{9\sin1}\approx1.05635$.
\item\label{Subord-f} $p(z)\prec\varphi_{\scriptscriptstyle{0}}(z)=1+\frac{z}{k}\left(\frac{k+z}{k-z}\right)\; (k=1+\sqrt{2})$ whenever $\beta\geq\frac{8}{9}\left(3+2\sqrt{2}\right)\approx5.18082$.
\item\label{Subord-g} $p(z)\prec\varphi_{\scriptscriptstyle{Ne}}(z)$ whenever $\beta\geq\frac{4}{3}$.
\end{enumerate}
The bounds on $\beta$ are sharp.
\end{theorem}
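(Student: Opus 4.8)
The plan is to apply \Cref{Lemma-3.4h-p132-Miller-Mocanu-Book} with the particular choices $\nu(w) \equiv 1$ and $\psi(w) \equiv \beta$, so that $Q(z) = \beta z q'(z)$ and $h(z) = 1 + \beta z q'(z)$, where $q$ is, in each of the seven parts, the target dominant ($\varphi_{\scriptscriptstyle{\leftmoon}}$, $\varphi_{\scriptscriptstyle{C}}$, $\varphi_{\scriptscriptstyle{lim}}$, $e^z$, $\varphi_{\scriptscriptstyle{S}}$, $\varphi_{\scriptscriptstyle{0}}$, or $\varphi_{\scriptscriptstyle{Ne}}$ itself). For this choice the hypothesis of the lemma reduces to the two verifications: that $Q$ is starlike in $\mathbb D$ (condition (ii)), and that $\mathrm{Re}\bigl(z h'(z)/Q(z)\bigr) = \mathrm{Re}\bigl(1 + z q''(z)/q'(z)\bigr) > 0$, i.e. that $q$ is convex (condition (iii)); in fact for these $q$ the function $h(z)=1+\beta z q'(z)$ will already be convex, so (i) holds and (ii)-(iii) are only needed in the weaker combined form. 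Once these are in place, the lemma tells us that $1 + \beta z p'(z) \prec 1 + \beta z q'(z) = h(z)$ forces $p \prec q$ — \emph{provided} we can show that $h(z) = 1 + \beta z q'(z)$ is subordinate to (indeed, for sharpness, has range filling out) the given nephroid $\varphi_{\scriptscriptstyle{Ne}}(z)$ for exactly the stated range of $\beta$.

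The core of the argument is therefore this last subordination $1 + \beta z q'(z) \prec \varphi_{\scriptscriptstyle{Ne}}(z)$, and here the geometry developed in Section~2 does the work. Since $\varphi_{\scriptscriptstyle{Ne}}(\mathbb D) = \Omega_{Ne}$ and $1 + \beta z q'(z)$ vanishes to order $1$ at $z=0$ with value $1$, it suffices (by univalence of $\varphi_{\scriptscriptstyle{Ne}}$) to show $1 + \beta z q'(z)$ maps $\mathbb D$ into $\Omega_{Ne}$. The key estimate is that $\Omega_{Ne}$ contains the disk $\{w : |w-1| < 2/3\}$ (this is the inclusion $r_a = 2/3$ at $a=1$ from \Cref{Lemma-Radius-ra-NeP}, recorded in the Remark following it), so it is enough to force $|\beta z q'(z)| < 2/3$ for all $z \in \mathbb D$, i.e. $\beta \cdot \sup_{|z|<1}|z q'(z)| \le 2/3$. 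Thus in each part one computes $M_q := \sup_{|z|<1}|z q'(z)|$ — for instance $M_q = 1/2$ when $q = \varphi_{\scriptscriptstyle{Ne}}$ itself (since $\sup|z(1-z^2)| = 1/2$... more carefully $q'(z) = 1 - z^2$ so $|zq'(z)| = |z||1-z^2| \le 1/2$), $M_q = e$ when $q = e^z$ giving $\beta \ge \tfrac{2/3}{e}\cdot\tfrac{e}{e-1}$? — and the stated threshold $\beta \ge (2/3)/M_q$ (times the appropriate normalizing constant coming from $q'(0)$) emerges. I would tabulate the seven values of $M_q$ rather than grind each one: e.g. for $\varphi_{\scriptscriptstyle{C}}$, $q'(z) = 4/3 + 4z/3$ so $\sup|zq'(z)| = \sup|z|\cdot\tfrac43|1+z|$; one checks the maximum on $|z|=1$ and obtains $\beta \ge 4/3$, matching part \eqref{Subord-b} and \eqref{Subord-g}.

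The main obstacle, and the part requiring genuine care, is the \textbf{sharpness} claim: that the displayed bound on $\beta$ cannot be lowered. For this I would exhibit, for each part, the extremal $p$ — the solution of $1 + \beta z p'(z) = \varphi_{\scriptscriptstyle{Ne}}(z)$ with $p(0)=1$, namely $p(z) = 1 + \tfrac1\beta\int_0^z \tfrac{\varphi_{\scriptscriptstyle{Ne}}(t)-1}{t}\,dt = 1 + \tfrac1\beta\bigl(z - \tfrac{z^3}{9}\bigr)$ — and show that for $\beta$ just below the threshold this $p$ fails to be subordinate to the target $q$, by checking that $p(r)$ or $p(-r)$ exits $q(\mathbb D)$ as $r \to 1^-$; the relevant boundary point of $q(\mathbb D)$ on the real axis (e.g. $\varphi_{\scriptscriptstyle{\leftmoon}}(1) = 1+\sqrt2$, $\varphi_{\scriptscriptstyle{C}}(1) = 3$, $\varphi_{\scriptscriptstyle{0}}(1) = 1 + \tfrac{k+1}{k-1}\cdot\tfrac1k = 1 + (3+2\sqrt2)\cdot\tfrac{1}{1+\sqrt2}$, etc.) is precisely what produces the numerical constants $\tfrac49(2+\sqrt2)$, $\tfrac43$, $\tfrac{8}{9\sin 1}$, $\tfrac89(3+2\sqrt2)$, and so on. Concretely, at $z=1$ one needs $1 + \tfrac{8}{9\beta} = q(1)$, giving $\beta = \tfrac{8}{9(q(1)-1)}$; substituting each $q(1)$ reproduces the list. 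The only subtlety beyond bookkeeping is verifying that the real-axis boundary value of $q(\mathbb D)$ is indeed the binding constraint (rather than some other boundary point being hit first), which follows because $p(\mathbb D)$ is, for the extremal $p$, itself a nephroid-type region symmetric about $\mathbb R$ with its rightmost point at $1 + \tfrac{8}{9\beta}$, so containment in $q(\mathbb D)$ is governed by that rightmost point together with the analogous leftmost one $1 - \tfrac{8}{9\beta}$ (whichever of $q$'s two real boundary values is closer to $1$).
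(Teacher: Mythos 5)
There is a genuine gap in the main step. You apply \Cref{Lemma-3.4h-p132-Miller-Mocanu-Book} with $q$ equal to the \emph{target} dominant, so the lemma needs the hypothesis $1+\beta zp'(z)\prec h(z)=1+\beta zq'(z)$. What you are actually given is $1+\beta zp'(z)\prec\varphi_{\scriptscriptstyle{Ne}}(z)$, and to pass from this to the needed subordination you must know $\varphi_{\scriptscriptstyle{Ne}}\prec h$, i.e. $\Omega_{Ne}\subseteq h(\mathbb{D})$ --- the \emph{opposite} of the containment you impose. Your requirement that $h\prec\varphi_{\scriptscriptstyle{Ne}}$, enforced by $\beta\,\sup_{|z|<1}|zq'(z)|\le 2/3$, is an \emph{upper} bound on $\beta$ and cannot produce the lower bounds the theorem asserts; in your own test case (b) one has $\sup_{|z|=1}\tfrac43|z||1+z|=\tfrac83$, so your inequality reads $\beta\le\tfrac14$, not $\beta\ge\tfrac43$. (Also, for $q=\varphi_{\scriptscriptstyle{Ne}}$ one has $\sup_{|z|<1}|z(1-z^2)|=2$, not $1/2$.) As written, the first half of the proposal therefore does not prove any of the implications.

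The paper's route --- which your sharpness paragraph essentially rediscovers --- is to take $q$ in the lemma to be $q_{\beta}(z)=1+\frac{1}{\beta}\left(z-\frac{z^3}{9}\right)$, the solution of $1+\beta zq_{\beta}'(z)=\varphi_{\scriptscriptstyle{Ne}}(z)$ with $q_{\beta}(0)=1$. With $\nu\equiv1$, $\psi\equiv\beta$ one gets $Q(z)=\beta zq_{\beta}'(z)=z-\frac{z^3}{3}$, which is starlike (Brannan), and $h=\varphi_{\scriptscriptstyle{Ne}}$, so the lemma gives $p\prec q_{\beta}$ with $q_{\beta}$ the best dominant. Each part then reduces to the elementary subordination $q_{\beta}\prec\text{target}$, which, by symmetry about the real axis, comes down to the endpoint conditions $\text{target}(-1)\le q_{\beta}(-1)$ and $q_{\beta}(1)\le\text{target}(1)$, i.e. $\beta\ge\frac{8}{9(\text{target}(1)-1)}$ and $\beta\ge\frac{8}{9(1-\text{target}(-1))}$; this is exactly the computation you carry out at the end ($\beta=8/(9(q(1)-1))$), and sharpness is automatic because $q_{\beta}$ is the best dominant. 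So the fix is to promote the function you call the ``extremal $p$'' to the role of $q$ in the lemma, discard the attempted application with $q$ equal to the target, and keep your endpoint bookkeeping --- supplemented by an argument (the paper only offers a graphical one) that the real endpoints are indeed the binding constraints for $q_{\beta}(\mathbb{D})\subseteq\text{target}(\mathbb{D})$.
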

\begin{proof}
Consider the analytic function $q_{\beta}:\overline{\mathbb{D}}\to\mathbb{C}$ defined by
\begin{align*}
q_{\beta}(z)=1+\frac{1}{\beta}\left(z-\frac{z^3}{9}\right).
\end{align*}
Clearly, the function $q_{\beta}(z)$ is a solution of the first order differential equation $1+\beta{zq'_{\beta}(z)}=\varphi_{\scriptscriptstyle{Ne}}(z)$. Defining $\nu(w)=1$ and $\psi(w)=\beta$, the functions $Q(z)$ and $h(z)$ in \Cref{Lemma-3.4h-p132-Miller-Mocanu-Book} become
\begin{align*}
Q(z)=zq'_{\beta}(z)\psi\big(q_{\beta}(z)\big)
                ={\beta}zq'_{\beta}(z)=z-\frac{z^3}{3}=\varphi_{\scriptscriptstyle{Ne}}(z)-1
\end{align*}
and
\begin{align*}
h(z)=\nu\big(q_\beta(z)\big)+Q(z)=1+Q(z)=\varphi_{\scriptscriptstyle {Ne}}(z).
\end{align*}
Since the function $\varphi_{\scriptscriptstyle{Ne}}(z)-1=z-z^3/3$ is starlike in $\mathbb{D}$
\cite[Theorem 2.3]{Brannan-1970-On-Univalent-Polynomials}, the function $Q(z)$ is starlike in $\mathbb{D}$. Further, $\mathrm{Re}\left(zh'(z)/Q(z)\right)=\mathrm{Re}\left(zQ'(z)/Q(z)\right)>0$ for $z\in\mathbb{D}$ and $p(0)=1=q_{\beta}(0)$. Therefore, it follows from  \Cref{Lemma-3.4h-p132-Miller-Mocanu-Book} that the subordination $1+{\beta}zp'(z)\prec1+{\beta}zq'_{\beta}(z)=\varphi_{\scriptscriptstyle{Ne}}(z)$ implies $p(z)\prec{q_{\beta}(z)}$. Now, each of the desired conclusions (\ref{Subord-a})--(\ref{Subord-g}) will follow, sequentially, if $q_{\beta}(z)\prec\varphi_{\scriptscriptstyle{\leftmoon}}(z),
\varphi_{\scriptscriptstyle{C}}(z),\varphi_{\scriptscriptstyle{lim}}(z),e^z,
\varphi_{\scriptscriptstyle{S}}(z),\varphi_{\scriptscriptstyle{0}}(z),\varphi_{\scriptscriptstyle{Ne}}(z)$.
\begin{enumerate}[(a):]
\item\label{Proof-Thrm-Subord-Neph-Impl-Crscnt-j0-NeP}
The necessary condition for the subordination $q_\beta(z)\prec\varphi_{\scriptscriptstyle{\leftmoon}}(z)$ to hold is that
\begin{align}\label{N&S-Cond-Subord-Neph-Impl-Crscnt-j0-NeP}
\sqrt{2}-1=\varphi_{\scriptscriptstyle{\leftmoon}}(-1)<q_\beta(-1)
                        <q_\beta(1)<\varphi_{\scriptscriptstyle{\leftmoon}}(1)=\sqrt{2}+1.
\end{align}
Graphical observation reveals that the condition (\ref{N&S-Cond-Subord-Neph-Impl-Crscnt-j0-NeP}) is also sufficient for the subordination $q_\beta(z)\prec\varphi_{\scriptscriptstyle{\leftmoon}}(z)$ to hold. Simplifying (\ref{N&S-Cond-Subord-Neph-Impl-Crscnt-j0-NeP}), we obtain $\beta\geq{4\left(2+\sqrt{2}\right)/9}=\beta_1$ and $\beta\geq{4\left(2-\sqrt{2}\right)/9}=\beta_2$. Therefore, the subordination $q_\beta(z)\prec\varphi_{\scriptscriptstyle{\leftmoon}}(z)$ holds true provided $\beta\geq\max\{\beta_1,\beta_2\}=\beta_1$. The number $\beta_1=4\left(2+\sqrt{2}\right)/9$ can not be decreased (see \Cref{Figure-Subord-Neph-Impl-Crscnt-NeP}).
\end{enumerate}
Following the same procedure, the other parts are easy to prove.
\end{proof}
Taking $p(z)=zf'(z)/f(z),\;f\in\mathcal{A}$ in the \Cref{Thrm-Subord-Implications-Neph-j0-NeP}, we obtain the following result.
\begin{corollary}
If $f\in\mathcal{A}$ satisfies
\begin{align*}
1+\beta\frac{zf'(z)}{f(z)}\left(1-\frac{zf'(z)}{f(z)}+\frac{zf''(z)}{f'(z)}\right)
                                \prec\varphi_{\scriptscriptstyle{Ne}}(z),
\end{align*}
then
\begin{enumerate}[{\rm(a)}]
\item $f\in\mathcal{S}^*_{\leftmoon}(z)$ whenever $\beta\geq\frac{8}{9 \left(2-\sqrt{2}\right)}$.
\item $f\in\mathcal{S}^*_C$ whenever $\beta\geq\frac{4}{3}$.
\item $f\in\mathcal{S}^*_{lim}$ whenever $\beta\geq\frac{16}{9(2\sqrt{2}-1)}$.
\item $f\in\mathcal{S}^*_{e}$ whenever $\beta\geq\frac{8 e}{9 (e-1)}$.
\item $f\in\mathcal{S}^*_{S}$ whenever $\beta\geq\frac{8}{9\sin1}$.
\item $f\in\mathcal{S}^*_{R}$ whenever $\beta\geq\frac{8}{9}\left(3+2\sqrt{2}\right)$.
\item $f\in\mathcal{S}^*_{Ne}$ whenever $\beta\geq\frac{4}{3}$.
\end{enumerate}
The bounds on $\beta$ are best possible.
\end{corollary}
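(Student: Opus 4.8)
The plan is to obtain the corollary as a direct specialization of \Cref{Thrm-Subord-Implications-Neph-j0-NeP} under the substitution $p(z)=zf'(z)/f(z)$. First I would note that for $f\in\mathcal{A}$ for which the displayed subordination is meaningful---which in particular forces $f(z)/z$ and $f'(z)$ to be zero-free on $\mathbb{D}$---the quotient $p(z)=zf'(z)/f(z)$ is analytic on $\mathbb{D}$ with $p(0)=1$. Differentiating $\log p(z)=\log z+\log f'(z)-\log f(z)$ gives
\begin{align*}
\frac{zp'(z)}{p(z)}=1+\frac{zf''(z)}{f'(z)}-\frac{zf'(z)}{f(z)},
\end{align*}
and hence
\begin{align*}
zp'(z)=\frac{zf'(z)}{f(z)}\left(1-\frac{zf'(z)}{f(z)}+\frac{zf''(z)}{f'(z)}\right),
\end{align*}
so that the hypothesis of the corollary is precisely $1+\beta zp'(z)\prec\varphi_{\scriptscriptstyle{Ne}}(z)$.

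Second, with this identity the parts (a)--(g) become verbatim restatements of the corresponding parts of \Cref{Thrm-Subord-Implications-Neph-j0-NeP}: under exactly the stated bounds on $\beta$ the theorem delivers $p(z)\prec\varphi_{\scriptscriptstyle{\leftmoon}}(z),\varphi_{\scriptscriptstyle{C}}(z),\varphi_{\scriptscriptstyle{lim}}(z),e^z,\varphi_{\scriptscriptstyle{S}}(z),\varphi_{\scriptscriptstyle{0}}(z),\varphi_{\scriptscriptstyle{Ne}}(z)$, and since $p(z)=zf'(z)/f(z)$ these subordinations are by definition the memberships $f\in\mathcal{S}^*_{\leftmoon},\mathcal{S}^*_C,\mathcal{S}^*_{lim},\mathcal{S}^*_e,\mathcal{S}^*_S,\mathcal{S}^*_R,\mathcal{S}^*_{Ne}$. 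The only housekeeping is to match the two ways the thresholds are written; for instance $\frac{4}{9}(2+\sqrt{2})=\frac{8}{9(2-\sqrt{2})}$ and $\frac{16}{63}(1+2\sqrt{2})=\frac{16}{9(2\sqrt{2}-1)}$, which follow by rationalizing denominators.

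Finally, for the claim that the bounds are best possible I would exhibit the extremal function explicitly. Given the critical value $\beta_0$ in a given part, set $q_{\beta_0}(z)=1+\frac{1}{\beta_0}\big(z-\frac{z^3}{9}\big)$ and define $f$ through the structural formula of \Cref{Theorem-Structural-Formulas-NeP}, $f(z)=z\exp\big(\int_0^z(q_{\beta_0}(\zeta)-1)\zeta^{-1}\,d\zeta\big)$; then $zf'(z)/f(z)=q_{\beta_0}(z)$, whence $1+\beta_0 zp'(z)=1+\beta_0 zq_{\beta_0}'(z)=\varphi_{\scriptscriptstyle{Ne}}(z)$, so $f$ satisfies the hypothesis while $q_{\beta_0}$ already meets the boundary of the relevant target domain at $z=\pm1$, ruling out any smaller $\beta$. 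I do not expect a genuine obstacle: the whole argument rests on the displayed logarithmic-derivative identity together with the sharpness already proved in \Cref{Thrm-Subord-Implications-Neph-j0-NeP}; the one point deserving a line of care is to record explicitly the zero-freeness of $f(z)/z$ and $f'(z)$ that is implicit in the statement.
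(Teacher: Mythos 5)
Your proposal is correct and follows exactly the paper's route: the corollary is obtained by substituting $p(z)=zf'(z)/f(z)$ into \Cref{Thrm-Subord-Implications-Neph-j0-NeP}, using the logarithmic-derivative identity $zp'(z)=\frac{zf'(z)}{f(z)}\left(1-\frac{zf'(z)}{f(z)}+\frac{zf''(z)}{f'(z)}\right)$, with the stated bounds being algebraic rewritings of the theorem's thresholds. Your extra remarks on sharpness via the structural formula and on the zero-freeness of $f(z)/z$ and $f'(z)$ are sound housekeeping that the paper leaves implicit.
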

\begin{minipage}{0.58\textwidth}
  \centering
   \includegraphics{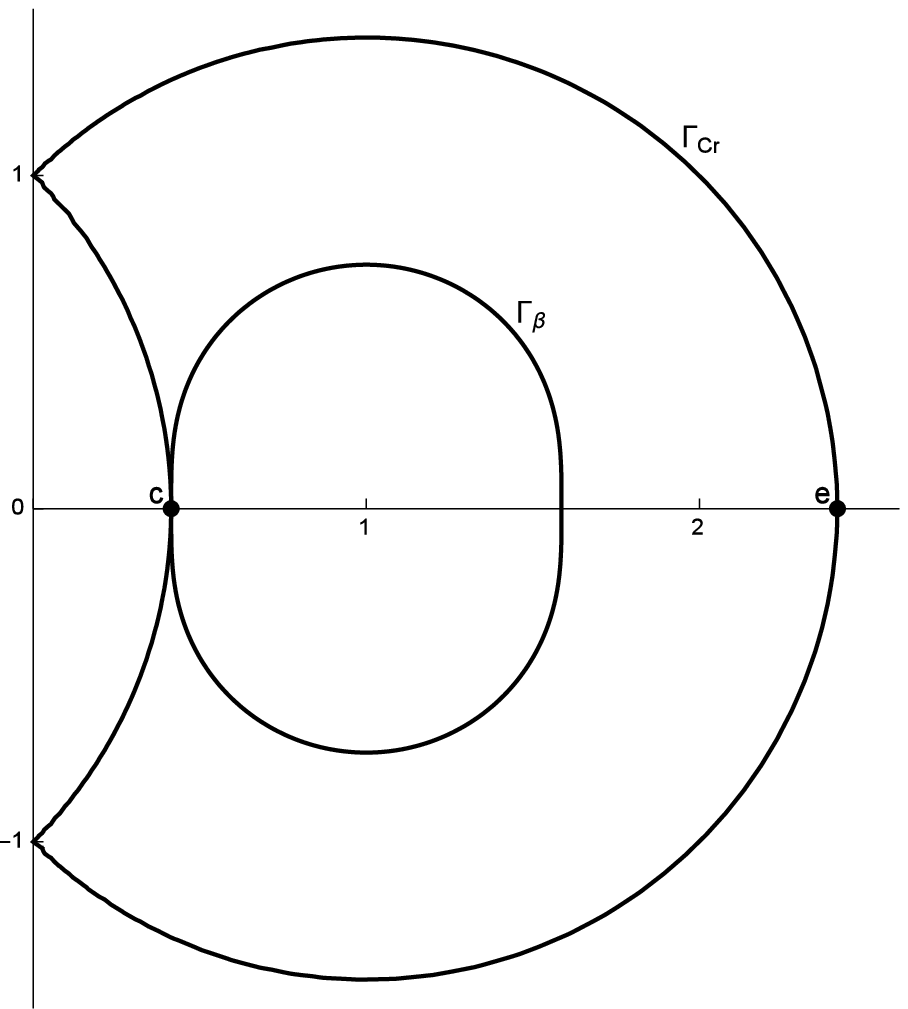}
\end{minipage}
\;
\begin{minipage}{0.42\textwidth}
   $\Gamma_{Cr}:\left|w^2-1\right|=2 \left|w\right|, \mathrm{Re}\,w>0$.\\
   \\
    $\Gamma_\beta:$ Boundary curve of $q_{\beta}(\mathbb{D})$,\\ where $q_{\beta}(z)=1+\frac{1}{\beta}\left(z-\frac{z^3}{9}\right)$\\
     with $\beta=4\left(2+\sqrt{2}\right)/9$.\\
    \\
     $c=\sqrt{2}-1$.\\
      $e=\sqrt{2}+1$.\\
\end{minipage}
\captionof{figure}{Sharpness of $\beta=\frac{4}{9} \left(2+\sqrt{2}\right)$ in \Cref{Thrm-Subord-Implications-Neph-j0-NeP}(\ref{Subord-a}).}
\label{Figure-Subord-Neph-Impl-Crscnt-NeP}
\noindent
\subsection*{Implications of $1+\beta\frac{zp'(z)}{p(z)}\prec\varphi_{\scriptscriptstyle{Ne}}(z)$}
\begin{theorem}\label{Thrm-Subord-Implications-Neph-j1-NeP}
Let $p\in\mathcal{H}$ satisfies $p(0)=1$, and let
\begin{align*}
1+\beta\frac{zp'(z)}{p(z)}\prec\varphi_{\scriptscriptstyle{Ne}}(z).
\end{align*}
Then the following subordinations hold:
\begin{enumerate}[{\rm(a)}]
\item $p(z)\prec\varphi_{\scriptscriptstyle{S}}(z)$ whenever $\beta\geq\frac{8/9}{\log(1+\sin1)}\approx1.45585$.
\item $p(z)\prec\varphi_{\scriptscriptstyle{0}}(z)$ whenever $\beta\geq-\frac{8/9}{\log\left(2\sqrt{2}-2\right)}\approx4.72245$.
\item $p(z)\prec\varphi_{\scriptscriptstyle{Ne}}(z)$ whenever $\beta\geq\frac{8/9}{\log \left(5/3\right)}$.
\end{enumerate}
The bounds on $\beta$ are best possible.
\end{theorem}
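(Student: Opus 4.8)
The plan is to imitate the proof of \Cref{Thrm-Subord-Implications-Neph-j0-NeP}, replacing the additive admissibility data by the multiplicative one appropriate to the case $j=1$. First I would solve the companion differential equation $1+\beta\,zq_\beta'(z)/q_\beta(z)=\varphi_{\scriptscriptstyle{Ne}}(z)$ subject to $q_\beta(0)=1$. Separating variables gives $q_\beta'(z)/q_\beta(z)=\beta^{-1}(1-z^2/3)$, hence
\begin{align*}
q_\beta(z)=\exp\left(\frac{1}{\beta}\left(z-\frac{z^3}{9}\right)\right).
\end{align*}
Since $z-z^3/9=z(1-z^2/9)$ is univalent in $\mathbb{D}$ and has modulus at most $10/9$ there, for all $\beta$ under consideration the map $g_\beta(z):=\beta^{-1}(z-z^3/9)$ is univalent with $g_\beta(\mathbb{D})$ of diameter less than $2\pi$; composing with the exponential shows that $q_\beta$ is univalent in $\mathbb{D}$ and nowhere zero, with $q_\beta(\mathbb{D})$ symmetric about the real axis.

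Next I would apply \Cref{Lemma-3.4h-p132-Miller-Mocanu-Book} with $q=q_\beta$, $\nu(w)=1$ and $\psi(w)=\beta/w$. With this choice,
\begin{align*}
Q(z)=zq_\beta'(z)\psi\big(q_\beta(z)\big)=\beta\,\frac{zq_\beta'(z)}{q_\beta(z)}=z-\frac{z^3}{3}=\varphi_{\scriptscriptstyle{Ne}}(z)-1,
\end{align*}
and $h(z)=\nu\big(q_\beta(z)\big)+Q(z)=\varphi_{\scriptscriptstyle{Ne}}(z)$. The hypotheses of the lemma are verified exactly as before: $\psi(w)=\beta/w\neq0$ on $q_\beta(\mathbb{D})$ since that set avoids the origin, $Q(z)=z-z^3/3$ is starlike in $\mathbb{D}$ by \cite[Theorem 2.3]{Brannan-1970-On-Univalent-Polynomials}, and $\mathrm{Re}\big(zh'(z)/Q(z)\big)=\mathrm{Re}\big(zQ'(z)/Q(z)\big)>0$. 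As $p(0)=1=q_\beta(0)$, the lemma yields that $1+\beta\,zp'(z)/p(z)\prec 1+\beta\,zq_\beta'(z)/q_\beta(z)=\varphi_{\scriptscriptstyle{Ne}}(z)$ forces $p(z)\prec q_\beta(z)$, with $q_\beta$ the best dominant.

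It then remains, for each target $\varphi_{\scriptscriptstyle{X}}\in\{\varphi_{\scriptscriptstyle{S}},\varphi_{\scriptscriptstyle{0}},\varphi_{\scriptscriptstyle{Ne}}\}$, to pin down the least $\beta$ for which $q_\beta(z)\prec\varphi_{\scriptscriptstyle{X}}(z)$; the three subordinations in (a)--(c) then follow by transitivity. Since $q_\beta$ is univalent and $q_\beta(\mathbb{D})$ is symmetric about the real axis, the requirement is $q_\beta(\mathbb{D})\subseteq\varphi_{\scriptscriptstyle{X}}(\mathbb{D})$, and a necessary condition is the nesting of the real-axis traces,
\begin{align*}
\varphi_{\scriptscriptstyle{X}}(-1)\le q_\beta(-1)=e^{-8/(9\beta)}\le q_\beta(1)=e^{8/(9\beta)}\le\varphi_{\scriptscriptstyle{X}}(1).
\end{align*}
Using the real-axis extremities $1\pm\sin 1$ for $\varphi_{\scriptscriptstyle{S}}$, the values $2\sqrt2-2$ and $2$ for $\varphi_{\scriptscriptstyle{0}}$ (with $k=1+\sqrt2$), and the points $1/3$ and $5/3$ at which the nephroid meets the real axis for $\varphi_{\scriptscriptstyle{Ne}}$, the binding inequality in each case rearranges to the announced thresholds $\beta\ge\frac{8/9}{\log(1+\sin 1)}$, $\beta\ge-\frac{8/9}{\log(2\sqrt2-2)}$ and $\beta\ge\frac{8/9}{\log(5/3)}$.

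I expect the decisive step to be the converse: showing that the one-dimensional nesting condition above is in fact \emph{sufficient} for the planar inclusion $q_\beta(\mathbb{D})\subseteq\varphi_{\scriptscriptstyle{X}}(\mathbb{D})$, i.e.\ that the oval $q_\beta(\partial\mathbb{D})$ does not bulge past the boundary of $\varphi_{\scriptscriptstyle{X}}(\mathbb{D})$ in the imaginary direction. As in the $j=0$ theorem this is settled by a direct graphical and numerical comparison of the two boundary curves, and the same comparison simultaneously exhibits the extremal behaviour at $z=\pm1$ that makes each $\beta$-bound sharp.
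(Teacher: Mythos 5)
Your proposal follows essentially the same route as the paper: the same companion function $q_\beta(z)=\exp\bigl(\tfrac{1}{\beta}(z-\tfrac{z^3}{9})\bigr)$, the same application of \Cref{Lemma-3.4h-p132-Miller-Mocanu-Book} with $\nu(w)=1$, $\psi(w)=\beta/w$ (you in fact verify the lemma's hypotheses more explicitly than the paper does), and the same reduction to the real-axis nesting $\varphi_{\scriptscriptstyle{X}}(-1)\le q_\beta(-1)<q_\beta(1)\le\varphi_{\scriptscriptstyle{X}}(1)$, taking the larger of the two resulting lower bounds on $\beta$. The paper likewise settles sufficiency of this endpoint condition by the graphical comparison invoked in the $j=0$ case, so your argument matches its proof.
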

\begin{proof}
Define the function $q_{\beta}:\overline{\mathbb{D}}\to\mathbb{C}$ as
\begin{align*}
q_{\beta}(z)=\exp\left(\frac{1}{\beta}\left(z-\frac{z^3}{9}\right)\right).
\end{align*}
The function $q_{\beta}(z)$ is analytic and satisfies $1+\beta{zq'_{\beta}(z)}/{q_{\beta}(z)}=\varphi_{\scriptscriptstyle{Ne}}(z)$. Taking $\nu(w)=1$ and $\psi(w)=\beta/w$ in \Cref{Lemma-3.4h-p132-Miller-Mocanu-Book}, we see that \Cref{Lemma-3.4h-p132-Miller-Mocanu-Book} is applicable. Therefore the subordination $1+\beta{zp'(z)}/{p(z)}\prec1+\beta{zq'_{\beta}(z)}/{q_{\beta}(z)}=\varphi_{\scriptscriptstyle{Ne}}(z)$ implies
$p(z)\prec{q_{\beta}(z)}$. Now, to prove the conclusions (a), (b) and (c), we only need to prove that $q_{\beta}(z)\prec:\varphi_{\scriptscriptstyle{S}}(z),\varphi_{\scriptscriptstyle{0}}(z)$ and $\varphi_{\scriptscriptstyle{Ne}}(z)$, respectively.
\begin{enumerate}[(a):]
\item As in \Cref{Thrm-Subord-Implications-Neph-j0-NeP}(\ref{Proof-Thrm-Subord-Neph-Impl-Crscnt-j0-NeP}), the necessary and sufficient condition for the subordination $q_\beta(z)\prec\varphi_{\scriptscriptstyle{S}}(z)$ to hold is that
$1-\sin{1}<q_\beta(-1)<q_\beta(1)<1+\sin{1}$. This is true if
\begin{align*}
\beta\geq\frac{-8/9}{\log(1-\sin1)}=\beta_1 \text{ and } \beta\geq\frac{8/9}{\log(1+\sin1)}=\beta_2.
\end{align*}
Thus, the subordination $q_{\beta}(z)\prec\varphi_{\scriptscriptstyle{S}}(z)$ holds if $\beta\geq\max\left\{\beta_1,\beta_2\right\}=\beta_2$.
\end{enumerate}
The subordinations (b) and (c) follow similarly.
\end{proof}
\begin{corollary}
If $f\in\mathcal{A}$ satisfies
\begin{align*}
1+\beta\left(1-\frac{zf'(z)}{f(z)}+\frac{zf''(z)}{f'(z)}\right)\prec\varphi_{\scriptscriptstyle{Ne}}(z),
\end{align*}
then
\begin{enumerate}[{\rm(a)}]
\item $f\in\mathcal{S}^*_{S}$ whenever $\beta\geq\frac{8/9}{\log(1+\sin1)}$.
\item $f\in\mathcal{S}^*_{R}$ whenever $\beta\geq-\frac{8/9}{\log\left(2\sqrt{2}-2\right)}$.
\item $f\in\mathcal{S}^*_{Ne}$ whenever $\beta\geq\frac{8/9}{\log \left(5/3\right)}$.
\end{enumerate}
The bounds on $\beta$ are best possible.
\end{corollary}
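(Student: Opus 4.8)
The plan is to deduce this corollary directly from \Cref{Thrm-Subord-Implications-Neph-j1-NeP} by means of the substitution $p(z)=zf'(z)/f(z)$. First I would observe that, for $f\in\mathcal{A}$ satisfying the hypothesis (which in particular forces $f(z)f'(z)\neq0$ on $\mathbb{D}\setminus\{0\}$), the function $p(z)=zf'(z)/f(z)$ is analytic in $\mathbb{D}$ with $p(0)=1$. Logarithmic differentiation gives
\begin{align*}
\frac{zp'(z)}{p(z)}=1-\frac{zf'(z)}{f(z)}+\frac{zf''(z)}{f'(z)},
\end{align*}
so that $1+\beta\,zp'(z)/p(z)=1+\beta\big(1-zf'(z)/f(z)+zf''(z)/f'(z)\big)$. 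Hence the assumption of the corollary is exactly the hypothesis $1+\beta\,zp'(z)/p(z)\prec\varphi_{\scriptscriptstyle{Ne}}(z)$ of \Cref{Thrm-Subord-Implications-Neph-j1-NeP}.

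With this identification in hand, I would simply quote the three conclusions of \Cref{Thrm-Subord-Implications-Neph-j1-NeP}: for $\beta\geq(8/9)/\log(1+\sin1)$ one obtains $p(z)\prec\varphi_{\scriptscriptstyle{S}}(z)$; for $\beta\geq-(8/9)/\log(2\sqrt{2}-2)$ one obtains $p(z)\prec\varphi_{\scriptscriptstyle{0}}(z)$; and for $\beta\geq(8/9)/\log(5/3)$ one obtains $p(z)\prec\varphi_{\scriptscriptstyle{Ne}}(z)$. Since $p(z)=zf'(z)/f(z)$, these subordinations read $zf'(z)/f(z)\prec\varphi_{\scriptscriptstyle{S}}(z),\ \varphi_{\scriptscriptstyle{0}}(z),\ \varphi_{\scriptscriptstyle{Ne}}(z)$, which by definition mean $f\in\mathcal{S}^*_{S}$, $f\in\mathcal{S}^*_{R}$ and $f\in\mathcal{S}^*_{Ne}$, respectively; this settles parts (a), (b) and (c).

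For the sharpness I would carry the extremal object of \Cref{Thrm-Subord-Implications-Neph-j1-NeP} through the structural formula of \Cref{Theorem-Structural-Formulas-NeP}. There the best dominant is $q_{\beta}(z)=\exp\big((z-z^3/9)/\beta\big)$, and the displayed lower bound on $\beta$ is exactly the threshold at which $q_{\beta}(1)$ (or $q_{\beta}(-1)$) reaches the boundary of the relevant Ma--Minda domain. Setting
\begin{align*}
f_{\beta}(z)=z\exp\left(\int_{0}^{z}\frac{q_{\beta}(\zeta)-1}{\zeta}\,d\zeta\right),
\end{align*}
one has $zf_{\beta}'(z)/f_{\beta}(z)=q_{\beta}(z)$, so $f_{\beta}\in\mathcal{A}$ satisfies the hypothesis with $\varphi_{\scriptscriptstyle{Ne}}$ attained, yet for any smaller $\beta$ the value $q_{\beta}(\pm1)$ escapes $\varphi_{\scriptscriptstyle{S}}(\mathbb{D})$ (resp.\ $\varphi_{\scriptscriptstyle{0}}(\mathbb{D})$, $\varphi_{\scriptscriptstyle{Ne}}(\mathbb{D})$), so $f_{\beta}$ is no longer in the asserted class. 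Since the substantive analytic work --- the differential-subordination implication $p\prec q_{\beta}$ and the sharp inclusions $q_{\beta}(\mathbb{D})\subset\varphi_{\scriptscriptstyle{S}}(\mathbb{D})$, etc. --- has already been done in \Cref{Thrm-Subord-Implications-Neph-j1-NeP}, the only genuine step here is the logarithmic-derivative identity, and there is no real obstacle beyond this bookkeeping.
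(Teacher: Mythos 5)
Your proof is correct and is exactly the paper's (implicit) argument: the corollary is obtained from \Cref{Thrm-Subord-Implications-Neph-j1-NeP} by the substitution $p(z)=zf'(z)/f(z)$, using the identity $zp'(z)/p(z)=1-zf'(z)/f(z)+zf''(z)/f'(z)$, with sharpness inherited from the theorem (your explicit extremal $f_\beta$ built from the structural formula with $zf_\beta'/f_\beta=q_\beta$ is a fine way to record it).
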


\subsection*{Implications of $1+\beta\frac{zp'(z)}{p^2(z)}\prec\varphi_{\scriptscriptstyle{Ne}}(z)$}
\begin{theorem}\label{Thrm-Subord-Implications-Neph-j2-NeP}
Let $p\in\mathcal{H}$ such that $p(0)=1$, and let
\begin{align*}
1+\beta\frac{zp'(z)}{p^2(z)}\prec\varphi_{\scriptscriptstyle{Ne}}(z).
\end{align*}
Then the following subordinations hold:
\begin{enumerate}[{\rm(a)}]
\item $p(z)\prec\varphi_{\scriptscriptstyle{\leftmoon}}(z)$ whenever $\beta\geq\frac{4}{9}\left(2+\sqrt{2}\right)\approx1.51743$.
\item $p(z)\prec\varphi_{\scriptscriptstyle{C}}(z)$ whenever $\beta\geq\frac{4}{3}$.
\item $p(z)\prec\varphi_{\scriptscriptstyle{lim}}(z)$ whenever $\beta\geq\frac{8}{63}\left(5+4\sqrt{2}\right)\approx1.35325$.
\item $p(z)\prec{e}^z$ whenever $\beta\geq\frac{8e}{9(e-1)}\approx1.4062$.
\item $p(z)\prec\varphi_{\scriptscriptstyle{S}}(z)$ whenever $\beta\geq\frac{8(1+\sin1)}{9\sin1}\approx1.94524$.
\item $p(z)\prec\varphi_{\scriptscriptstyle{0}}(z)$ whenever $\beta\geq\frac{16}{9}\left(1+\sqrt{2}\right)\approx4.29194$.
\item $p(z)\prec\varphi_{\scriptscriptstyle{Ne}}(z)$ whenever $\beta\geq\frac{20}{9}$.
\end{enumerate}
The bounds on $\beta$ are best possible.
\end{theorem}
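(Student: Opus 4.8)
The plan is to run the argument of \Cref{Thrm-Subord-Implications-Neph-j0-NeP} and \Cref{Thrm-Subord-Implications-Neph-j1-NeP} once more, now in the case $j=2$. First I would introduce the analytic function $q_{\beta}:\overline{\mathbb{D}}\to\mathbb{C}$ given by
\[
q_{\beta}(z)=\frac{1}{1-\frac{1}{\beta}\left(z-\frac{z^{3}}{9}\right)}=\frac{\beta}{\beta-z+\frac{z^{3}}{9}},
\]
which is the solution of the separable differential equation $1+\beta\,zq_{\beta}'(z)/q_{\beta}^{2}(z)=\varphi_{\scriptscriptstyle{Ne}}(z)$ with $q_{\beta}(0)=1$. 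Since $\max_{|z|\le1}|z-z^{3}/9|=10/9$ and every constant appearing in (a)--(g) is at least $4/3>10/9$, the denominator $\beta-z+z^{3}/9$ never vanishes on $\overline{\mathbb{D}}$; hence $q_{\beta}$ is analytic, $0\notin\overline{q_{\beta}(\mathbb{D})}$, and $\psi(w)=\beta/w^{2}$ is analytic and non-vanishing on a domain $D\supset q_{\beta}(\mathbb{D})$.

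Next I would invoke \Cref{Lemma-3.4h-p132-Miller-Mocanu-Book} with $\nu(w)\equiv1$ and $\psi(w)=\beta/w^{2}$, whose hypotheses are verified exactly as in the two preceding theorems: here
\[
Q(z)=zq_{\beta}'(z)\psi\bigl(q_{\beta}(z)\bigr)=\beta\,\frac{zq_{\beta}'(z)}{q_{\beta}^{2}(z)}=z-\frac{z^{3}}{3}=\varphi_{\scriptscriptstyle{Ne}}(z)-1
\]
is starlike in $\mathbb{D}$ by Brannan's criterion \cite[Theorem 2.3]{Brannan-1970-On-Univalent-Polynomials}, $h(z)=\nu\bigl(q_{\beta}(z)\bigr)+Q(z)=\varphi_{\scriptscriptstyle{Ne}}(z)$, and $\mathrm{Re}\bigl(zh'(z)/Q(z)\bigr)=\mathrm{Re}\bigl(zQ'(z)/Q(z)\bigr)>0$. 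Together with $p(0)=1=q_{\beta}(0)$, this yields that the hypothesis $1+\beta zp'(z)/p^{2}(z)\prec\varphi_{\scriptscriptstyle{Ne}}(z)$ implies $p\prec q_{\beta}$, with $q_{\beta}$ the best dominant.

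It then remains to check, for each target $\Phi\in\{\varphi_{\scriptscriptstyle{\leftmoon}},\varphi_{\scriptscriptstyle{C}},\varphi_{\scriptscriptstyle{lim}},e^{z},\varphi_{\scriptscriptstyle{S}},\varphi_{\scriptscriptstyle{0}},\varphi_{\scriptscriptstyle{Ne}}\}$, that $q_{\beta}\prec\Phi$ under the stated bound. Because $z-z^{3}/9$ is real and strictly increasing on $(-1,1)$, the real trace of $q_{\beta}$ is the increasing arc joining $q_{\beta}(-1)=(1+\tfrac{8}{9\beta})^{-1}$ to $q_{\beta}(1)=(1-\tfrac{8}{9\beta})^{-1}$, so a necessary condition for $q_{\beta}\prec\Phi$ is
\[
\Phi(-1)<q_{\beta}(-1)<q_{\beta}(1)<\Phi(1),
\]
and, as in the earlier theorems, a graphical comparison of the two boundary curves shows that this condition is also sufficient. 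Solving the two inequalities, the more restrictive of them --- the right-endpoint condition $q_{\beta}(1)<\Phi(1)$ in cases (a)--(e) and (g), and the left-endpoint condition $q_{\beta}(-1)>\Phi(-1)$ in case (f) --- reduces to exactly the constant listed: e.g.\ $q_{\beta}(1)<5/3$ is equivalent to $\beta\ge20/9$, giving (g), while $q_{\beta}(-1)>2\sqrt{2}-2=\varphi_{\scriptscriptstyle{0}}(-1)$ is equivalent to $\beta\ge\tfrac{16}{9}(1+\sqrt{2})$, giving (f). Sharpness follows by taking $p=q_{\beta}$: at the stated value of $\beta$ the relevant endpoint of the real trace of $q_{\beta}$ lies on $\partial\Phi(\mathbb{D})$, so the subordination $p\prec\Phi$ fails for any smaller $\beta$.

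The step I expect to be the main obstacle is the \emph{sufficiency} of the endpoint condition $\Phi(-1)<q_{\beta}(-1)<q_{\beta}(1)<\Phi(1)$ for $q_{\beta}\prec\Phi$. In the $j=0$ theorem $q_{\beta}$ was a cubic polynomial, but here $q_{\beta}$ is the reciprocal of a cubic, so $q_{\beta}(\mathbb{D})$ need not be convex and one cannot argue by convexity alone; the picture has to be backed up by verifying that the full boundary curve $q_{\beta}(e^{it})$, $t\in(-\pi,\pi]$, stays inside $\Phi(\mathbb{D})$, which for the more delicate targets $\varphi_{\scriptscriptstyle{lim}}$, $\varphi_{\scriptscriptstyle{S}}$ and $\varphi_{\scriptscriptstyle{0}}$ requires a short extremal-value estimate rather than a mere graphical observation.
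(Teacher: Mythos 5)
Your proposal is correct and follows essentially the same route as the paper: the same dominant $q_{\beta}(z)=\bigl(1-\tfrac{z}{\beta}(1-\tfrac{z^{2}}{9})\bigr)^{-1}$, the same application of \Cref{Lemma-3.4h-p132-Miller-Mocanu-Book} with $\nu(w)=1$, $\psi(w)=\beta/w^{2}$, and the same endpoint comparison $\Phi(-1)<q_{\beta}(-1)<q_{\beta}(1)<\Phi(1)$ (with graphical sufficiency), which is exactly how the paper proceeds by reducing to the argument of \Cref{Thrm-Subord-Implications-Neph-j0-NeP}. Your computed binding endpoints and resulting bounds agree with the stated constants, and your added checks (non-vanishing of the denominator of $q_{\beta}$, which endpoint is restrictive) only make explicit what the paper leaves implicit.
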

\begin{proof}
Define $q_{\beta}:\overline{\mathbb{D}}\to\mathbb{C}$ as
\begin{align*}
q_{\beta}(z)=\left(1-\frac{z}{\beta}\left(1-\frac{z^2}{9}\right)\right)^{-1}.
\end{align*}
The function $q_{\beta}(z)$ is analytic and satisfies the first order differential equation $1+\beta{zq'_{\beta}(z)}/{q^2_{\beta}(z)}=\varphi_{\scriptscriptstyle{Ne}}(z)$. Now take the functions $\nu$ and $\psi$ of \Cref{Lemma-3.4h-p132-Miller-Mocanu-Book} as $\nu(w)=1$ and $\psi(w)=\beta/w^2$, and proceed as in \Cref{Thrm-Subord-Implications-Neph-j0-NeP}.
\end{proof}

\begin{corollary}\label{Coroll-Suff-Cond-Subor-NeP}
If $f\in\mathcal{A}$ satisfies
\begin{align*}
1+\beta\left(\frac{zf'(z)}{f(z)}\right)^{-1}\left(1-\frac{zf'(z)}{f(z)}+\frac{zf''(z)}{f'(z)}\right)
                                                  \prec\varphi_{\scriptscriptstyle{Ne}}(z),
\end{align*}
then
\begin{enumerate}[{\rm(a)}]
\item $f\in\mathcal{S}^*_{\leftmoon}(z)$ whenever $\beta\geq\frac{4}{9}\left(2+\sqrt{2}\right)$.
\item $f\in\mathcal{S}^*_C$ whenever $\beta\geq\frac{4}{3}$.
\item $f\in\mathcal{S}^*_{lim}$ whenever $\beta\geq\frac{8}{63}\left(5+4\sqrt{2}\right)$.
\item $f\in\mathcal{S}^*_{e}$ whenever $\beta\geq\frac{8e}{9(e-1)}$.
\item $f\in\mathcal{S}^*_{S}$ whenever $\beta\geq\frac{8(1+\sin1)}{9\sin1}$.
\item $f\in\mathcal{S}^*_{R}$ whenever $\beta\geq\frac{16}{9}\left(1+\sqrt{2}\right)$.
\item $f\in\mathcal{S}^*_{Ne}$ whenever $\beta\geq\frac{20}{9}$.
\end{enumerate}
The bounds on $\beta$ are best possible.
\end{corollary}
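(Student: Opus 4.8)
The plan is to read off the corollary as a direct specialisation of \Cref{Thrm-Subord-Implications-Neph-j2-NeP}. Given $f\in\mathcal{A}$ satisfying the displayed hypothesis, set $p(z)=zf'(z)/f(z)$; as in the corollary following \Cref{Thrm-Subord-Implications-Neph-j0-NeP}, $f$ is taken so that $p$ is analytic in $\mathbb{D}$ with $p(0)=1$.

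The one step that carries any content is an elementary identity. Logarithmic differentiation of $p(z)=zf'(z)/f(z)$ gives $\frac{zp'(z)}{p(z)}=1+\frac{zf''(z)}{f'(z)}-\frac{zf'(z)}{f(z)}$, and dividing once more by $p(z)=zf'(z)/f(z)$ yields
\[
\frac{zp'(z)}{p^{2}(z)}=\left(\frac{zf'(z)}{f(z)}\right)^{-1}\left(1-\frac{zf'(z)}{f(z)}+\frac{zf''(z)}{f'(z)}\right).
\]
Hence the hypothesis of the corollary is exactly $1+\beta\,zp'(z)/p^{2}(z)\prec\varphi_{\scriptscriptstyle{Ne}}(z)$ for this $p$, which is precisely the hypothesis of \Cref{Thrm-Subord-Implications-Neph-j2-NeP}.

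Invoking that theorem, under the lower bounds on $\beta$ listed in parts (a)--(g) one obtains $p(z)\prec\varphi_{\scriptscriptstyle{\leftmoon}}(z),\,\varphi_{\scriptscriptstyle{C}}(z),\,\varphi_{\scriptscriptstyle{lim}}(z),\,e^{z},\,\varphi_{\scriptscriptstyle{S}}(z),\,\varphi_{\scriptscriptstyle{0}}(z),\,\varphi_{\scriptscriptstyle{Ne}}(z)$, respectively. Since $p(z)=zf'(z)/f(z)$, each such subordination is, by the Ma--Minda definition~\eqref{Definition-Ma-Minda-classes-varphi}, the statement that $f$ belongs to the corresponding class $\mathcal{S}^{*}_{\leftmoon},\,\mathcal{S}^{*}_{C},\,\mathcal{S}^{*}_{lim},\,\mathcal{S}^{*}_{e},\,\mathcal{S}^{*}_{S},\,\mathcal{S}^{*}_{R},\,\mathcal{S}^{*}_{Ne}$; this establishes (a)--(g).

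For sharpness I would transport the extremal situation of \Cref{Thrm-Subord-Implications-Neph-j2-NeP}: at the critical value $\beta_{0}$ of $\beta$ the governing function is $q_{\beta}(z)=\bigl(1-\tfrac{z}{\beta}(1-\tfrac{z^{2}}{9})\bigr)^{-1}$, analytic in $\mathbb{D}$ with $q_{\beta}(0)=1$ for every $\beta\ge\beta_{0}$; then $f(z)=z\exp\!\int_{0}^{z}\frac{q_{\beta_{0}}(\zeta)-1}{\zeta}\,d\zeta$ lies in $\mathcal{A}$ via the construction in \Cref{Theorem-Structural-Formulas-NeP}, satisfies $zf'(z)/f(z)=q_{\beta_{0}}(z)$ and hence the hypothesis of the corollary with equality, while $q_{\beta_{0}}$ already touches the boundary of the relevant target domain, so $\beta_{0}$ cannot be lowered. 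There is no genuine obstacle here: the content is entirely carried by the already-proved \Cref{Thrm-Subord-Implications-Neph-j2-NeP} together with the identity above, and the only thing to keep straight is the bookkeeping of the seven target functions and their thresholds.
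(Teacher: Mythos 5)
Your proposal is correct and is essentially the paper's own (implicit) argument: the paper obtains this corollary exactly by setting $p(z)=zf'(z)/f(z)$ in \Cref{Thrm-Subord-Implications-Neph-j2-NeP}, using the same identity $\frac{zp'(z)}{p^{2}(z)}=\left(\frac{zf'(z)}{f(z)}\right)^{-1}\left(1-\frac{zf'(z)}{f(z)}+\frac{zf''(z)}{f'(z)}\right)$ and reading off membership in the Ma--Minda classes, with sharpness inherited from the theorem. Nothing further is needed.
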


\end{document}